\newtheorem{theorem}{Theorem}[section]
\newtheorem{axiom}[theorem]{Axiom}
\newtheorem{conjecture}[theorem]{Conjecture}
\newtheorem{corollary}[theorem]{Corollary}
\newtheorem{exercise}[theorem]{Exercise}
\newtheorem{lemma}[theorem]{Lemma}
\newtheorem{proposition}[theorem]{Proposition}
\theoremstyle{remark}
\newtheorem{definition}[theorem]{Definition}
\newtheorem{remark}[theorem]{Remark}
\newtheorem{example}[theorem]{Example}
\let\pdfoutput=\undefined\fi
\chardef\@x10\chardef\@xv60
\def\tcitime{
\def\@time{%
  \@minute\time\@hour\@minute\divide\@hour\@xv
  \ifnum\@hour<\@x 0\fi\the\@hour:%
  \multiply\@hour\@xv\advance\@minute-\@hour
  \ifnum\@minute<\@x 0\fi\the\@minute
  }}%
\def\x@hyperref#1#2#3{%
   \catcode`\~ = 12
   \catcode`\$ = 12
   \catcode`\_ = 12
   \catcode`\# = 12
   \catcode`\& = 12
   \y@hyperref{#1}{#2}{#3}%
}
\def\y@hyperref#1#2#3#4{%
   #2\ref{#4}#3
   \catcode`\~ = 13
   \catcode`\$ = 3
   \catcode`\_ = 8
   \catcode`\# = 6
   \catcode`\& = 4
}
\def\QCTOpt[#1]#2{%
  \def\QCTOptB{#1}
  \def\QCTOptA{#2}
}
\def\QCTNOpt#1{%
  \def\QCTOptA{#1}
  \let\QCTOptB\empty
}
\def\Qct{%
  \@ifnextchar[{%
    \QCTOpt}{\QCTNOpt}
}
\def\QCBOpt[#1]#2{%
  \def\QCBOptB{#1}%
  \def\QCBOptA{#2}%
}
\def\QCBNOpt#1{%
  \def\QCBOptA{#1}%
  \let\QCBOptB\empty
}
\def\Qcb{%
  \@ifnextchar[{%
    \QCBOpt}{\QCBNOpt}%
}
\def\PrepCapArgs{%
  \ifx\QCBOptA\empty
    \ifx\QCTOptA\empty
      {}%
    \else
      \ifx\QCTOptB\empty
        {\QCTOptA}%
      \else
        [\QCTOptB]{\QCTOptA}%
      \fi
    \fi
  \else
    \ifx\QCBOptA\empty
      {}%
    \else
      \ifx\QCBOptB\empty
        {\QCBOptA}%
      \else
        [\QCBOptB]{\QCBOptA}%
      \fi
    \fi
  \fi
}
\def\GRAPHICSPS#1{%
 \ifcase\GRAPHICSTYPE
   \special{ps: #1}%
 \or
   \special{language "PS", include "#1"}%
 \fi
}%
\def\graffile#1#2#3#4{%
    \bgroup
	   \@inlabelfalse
       \leavevmode
       \@ifundefined{bbl@deactivate}{\def~{\string~}}{\activesoff}%
        \raise -#4 \BOXTHEFRAME{%
           \hbox to #2{\raise #3\hbox to #2{\null #1\hfil}}}%
    \egroup
}%
\def\draftbox#1#2#3#4{%
 \leavevmode\raise -#4 \hbox{%
  \frame{\rlap{\protect\tiny #1}\hbox to #2%
   {\vrule height#3 width\z@ depth\z@\hfil}%
  }%
 }%
}%
\let\nographics=\@msidraft
\newif\ifwasdraft
\def\GRAPHIC#1#2#3#4#5{%
   \ifnum\@msidraft=\@ne\draftbox{#2}{#3}{#4}{#5}%
   \else\graffile{#1}{#3}{#4}{#5}%
   \fi
}
\def\addtoLaTeXparams#1{%
    \edef\LaTeXparams{\LaTeXparams #1}}%
\newif\ifBoxFrame \BoxFramefalse
\newif\ifOverFrame \OverFramefalse
\newif\ifUnderFrame \UnderFramefalse
\def\BOXTHEFRAME#1{%
   \hbox{%
      \ifBoxFrame
         \frame{#1}%
      \else
         {#1}%
      \fi
   }%
}
\def\doFRAMEparams#1{\BoxFramefalse\OverFramefalse\UnderFramefalse\readFRAMEparams#1\end}%
\def\readFRAMEparams#1{%
 \ifx#1\end%
  \let\next=\relax
  \else
  \ifx#1i\dispkind=\z@\fi
  \ifx#1d\dispkind=\@ne\fi
  \ifx#1f\dispkind=\tw@\fi
  \ifx#1t\addtoLaTeXparams{t}\fi
  \ifx#1b\addtoLaTeXparams{b}\fi
  \ifx#1p\addtoLaTeXparams{p}\fi
  \ifx#1h\addtoLaTeXparams{h}\fi
  \ifx#1X\BoxFrametrue\fi
  \ifx#1O\OverFrametrue\fi
  \ifx#1U\UnderFrametrue\fi
  \ifx#1w
    \ifnum\@msidraft=1\wasdrafttrue\else\wasdraftfalse\fi
    \@msidraft=\@ne
  \fi
  \let\next=\readFRAMEparams
  \fi
 \next
 }%
\def\IFRAME#1#2#3#4#5#6{%
      \bgroup
      \let\QCTOptA\empty
      \let\QCTOptB\empty
      \let\QCBOptA\empty
      \let\QCBOptB\empty
      #6%
      \parindent=0pt
      \leftskip=0pt
      \rightskip=0pt
      \setbox0=\hbox{\QCBOptA}%
      \@tempdima=#1\relax
      \ifOverFrame
          \typeout{This is not implemented yet}%
          \show\HELP
      \else
         \ifdim\wd0>\@tempdima
            \advance\@tempdima by \@tempdima
            \ifdim\wd0 >\@tempdima
               \setbox1 =\vbox{%
                  \unskip\hbox to \@tempdima{\hfill\GRAPHIC{#5}{#4}{#1}{#2}{#3}\hfill}%
                  \unskip\hbox to \@tempdima{\parbox[b]{\@tempdima}{\QCBOptA}}%
               }%
               \wd1=\@tempdima
            \else
               \textwidth=\wd0
               \setbox1 =\vbox{%
                 \noindent\hbox to \wd0{\hfill\GRAPHIC{#5}{#4}{#1}{#2}{#3}\hfill}\\%
                 \noindent\hbox{\QCBOptA}%
               }%
               \wd1=\wd0
            \fi
         \else
            \ifdim\wd0>0pt
              \hsize=\@tempdima
              \setbox1=\vbox{%
                \unskip\GRAPHIC{#5}{#4}{#1}{#2}{0pt}%
                \break
                \unskip\hbox to \@tempdima{\hfill \QCBOptA\hfill}%
              }%
              \wd1=\@tempdima
           \else
              \hsize=\@tempdima
              \setbox1=\vbox{%
                \unskip\GRAPHIC{#5}{#4}{#1}{#2}{0pt}%
              }%
              \wd1=\@tempdima
           \fi
         \fi
         \@tempdimb=\ht1
         \advance\@tempdimb by -#2
         \advance\@tempdimb by #3
         \leavevmode
         \raise -\@tempdimb \hbox{\box1}%
      \fi
      \egroup%
}%
\def\DFRAME#1#2#3#4#5{%
  \vspace\topsep
  \hfil\break
  \bgroup
     \leftskip\@flushglue
	 \rightskip\@flushglue
	 \parindent\z@
	 \parfillskip\z@skip
     \let\QCTOptA\empty
     \let\QCTOptB\empty
     \let\QCBOptA\empty
     \let\QCBOptB\empty
	 \vbox\bgroup
        \ifOverFrame 
           #5\QCTOptA\par
        \fi
        \GRAPHIC{#4}{#3}{#1}{#2}{\z@}%
        \ifUnderFrame 
           \break#5\QCBOptA
        \fi
	 \egroup
  \egroup
  \vspace\topsep
  \break
}%
\def\FFRAME#1#2#3#4#5#6#7{%
  \@ifundefined{floatstyle}
    {
     \begin{figure}[#1]%
    }
    {
	 \ifx#1h
      \begin{figure}[H]%
	 \else
      \begin{figure}[#1]%
	 \fi
	}
  \let\QCTOptA\empty
  \let\QCTOptB\empty
  \let\QCBOptA\empty
  \let\QCBOptB\empty
  \ifOverFrame
    #4
    \ifx\QCTOptA\empty
    \else
      \ifx\QCTOptB\empty
        \caption{\QCTOptA}%
      \else
        \caption[\QCTOptB]{\QCTOptA}%
      \fi
    \fi
    \ifUnderFrame\else
      \label{#5}%
    \fi
  \else
    \UnderFrametrue%
  \fi
  \begin{center}\GRAPHIC{#7}{#6}{#2}{#3}{\z@}\end{center}%
  \ifUnderFrame
    #4
    \ifx\QCBOptA\empty
      \caption{}%
    \else
      \ifx\QCBOptB\empty
        \caption{\QCBOptA}%
      \else
        \caption[\QCBOptB]{\QCBOptA}%
      \fi
    \fi
    \label{#5}%
  \fi
  \end{figure}%
 }%
\def\makeactives{
  \catcode`\"=\active
  \catcode`\;=\active
  \catcode`\:=\active
  \catcode`\'=\active
  \catcode`\~=\active
}
   \gdef\activesoff{%
      \def"{\string"}%
      \def;{\string;}%
      \def:{\string:}%
      \def'{\string'}%
      \def~{\string~}%
    }
\def\FRAME#1#2#3#4#5#6#7#8{%
 \bgroup
 \ifnum\@msidraft=\@ne
   \wasdrafttrue
 \else
   \wasdraftfalse%
 \fi
 \def\LaTeXparams{}%
 \dispkind=\z@
 \def\LaTeXparams{}%
 \doFRAMEparams{#1}%
 \ifnum\dispkind=\z@\IFRAME{#2}{#3}{#4}{#7}{#8}{#5}\else
  \ifnum\dispkind=\@ne\DFRAME{#2}{#3}{#7}{#8}{#5}\else
   \ifnum\dispkind=\tw@
    \edef\@tempa{\noexpand\FFRAME{\LaTeXparams}}%
    \@tempa{#2}{#3}{#5}{#6}{#7}{#8}%
    \fi
   \fi
  \fi
  \ifwasdraft\@msidraft=1\else\@msidraft=0\fi{}%
  \egroup
 }%
\def\TEXUX#1{"texux"}
\long\def\QQQ#1#2{%
     \long\expandafter\def\csname#1\endcsname{#2}}%
\long\def\QQA#1#2{}%
\def\QTR#1#2{{\csname#1\endcsname {#2}}}%
\def\EXPAND#1[#2]#3{}%
\def\NOEXPAND#1[#2]#3{}%
\def\LaTeXparent#1{}%
\def\ChildStyles#1{}%
\def\ChildDefaults#1{}%
\def\QTagDef#1#2#3{}%
  \providecommand{\UNICODE}[2][]{\protect\rule{.1in}{.1in}}
  \providecommand{\U}[1]{\protect\rule{.1in}{.1in}}
\def\QQfnmark#1{\footnotemark}
 \def\abstract{%
  \if@twocolumn
   \section*{Abstract (Not appropriate in this style!)}%
   \else \small 
   \begin{center}{\bf Abstract\vspace{-.5em}\vspace{\z@}}\end{center}%
   \quotation 
   \fi
  }%
   \def\registered{\relax\ifmmode{}\r@gistered
                    \else$\m@th\r@gistered$\fi}%
 \def\r@gistered{^{\ooalign
  {\hfil\raise.07ex\hbox{$\scriptstyle\rm\text{R}$}\hfil\crcr
  \mathhexbox20D}}}}{}%
\newdimen\theight
\def\newfmtname{LaTeX2e}
  \DeclareOldFontCommand{\rm}{\normalfont\rmfamily}{\mathrm}
  \DeclareOldFontCommand{\sf}{\normalfont\sffamily}{\mathsf}
  \DeclareOldFontCommand{\tt}{\normalfont\ttfamily}{\mathtt}
  \DeclareOldFontCommand{\bf}{\normalfont\bfseries}{\mathbf}
  \DeclareOldFontCommand{\it}{\normalfont\itshape}{\mathit}
  \DeclareOldFontCommand{\sl}{\normalfont\slshape}{\@nomath\sl}
  \DeclareOldFontCommand{\sc}{\normalfont\scshape}{\@nomath\sc}
\def\alpha{{\Greekmath 010B}}%
\def\beta{{\Greekmath 010C}}%
\def\gamma{{\Greekmath 010D}}%
\def\delta{{\Greekmath 010E}}%
\def\epsilon{{\Greekmath 010F}}%
\def\zeta{{\Greekmath 0110}}%
\def\eta{{\Greekmath 0111}}%
\def\theta{{\Greekmath 0112}}%
\def\iota{{\Greekmath 0113}}%
\def\kappa{{\Greekmath 0114}}%
\def\lambda{{\Greekmath 0115}}%
\def\mu{{\Greekmath 0116}}%
\def\nu{{\Greekmath 0117}}%
\def\xi{{\Greekmath 0118}}%
\def\pi{{\Greekmath 0119}}%
\def\rho{{\Greekmath 011A}}%
\def\sigma{{\Greekmath 011B}}%
\def\tau{{\Greekmath 011C}}%
\def\upsilon{{\Greekmath 011D}}%
\def\phi{{\Greekmath 011E}}%
\def\chi{{\Greekmath 011F}}%
\def\psi{{\Greekmath 0120}}%
\def\omega{{\Greekmath 0121}}%
\def\varepsilon{{\Greekmath 0122}}%
\def\vartheta{{\Greekmath 0123}}%
\def\varpi{{\Greekmath 0124}}%
\def\varrho{{\Greekmath 0125}}%
\def\varsigma{{\Greekmath 0126}}%
\def\varphi{{\Greekmath 0127}}%
\def\nabla{{\Greekmath 0272}}
\def\FindBoldGroup{%
   {\setbox0=\hbox{$\mathbf{x\global\edef\theboldgroup{\the\mathgroup}}$}}%
}
\def\Greekmath#1#2#3#4{%
    \if@compatibility
        \ifnum\mathgroup=\symbold
           \mathchoice{\mbox{\boldmath$\displaystyle\mathchar"#1#2#3#4$}}%
                      {\mbox{\boldmath$\textstyle\mathchar"#1#2#3#4$}}%
                      {\mbox{\boldmath$\scriptstyle\mathchar"#1#2#3#4$}}%
                      {\mbox{\boldmath$\scriptscriptstyle\mathchar"#1#2#3#4$}}%
        \else
           \mathchar"#1#2#3#4%
        \fi 
    \else 
        \FindBoldGroup
        \ifnum\mathgroup=\theboldgroup 
           \mathchoice{\mbox{\boldmath$\displaystyle\mathchar"#1#2#3#4$}}%
                      {\mbox{\boldmath$\textstyle\mathchar"#1#2#3#4$}}%
                      {\mbox{\boldmath$\scriptstyle\mathchar"#1#2#3#4$}}%
                      {\mbox{\boldmath$\scriptscriptstyle\mathchar"#1#2#3#4$}}%
        \else
           \mathchar"#1#2#3#4%
        \fi     	    
	  \fi}
\newif\ifGreekBold  \GreekBoldfalse
\let\SAVEPBF=\pbf
\def\pbf{\GreekBoldtrue\SAVEPBF}%
  \newcounter{equationnumber}  
  \def\mathletters{%
     \addtocounter{equation}{1}
     \edef\@currentlabel{\theequation}%
     \setcounter{equationnumber}{\c@equation}
     \setcounter{equation}{0}%
     \edef\theequation{\@currentlabel\noexpand\alph{equation}}%
  }
    \def\BibTeX{{\rm B\kern-.05em{\sc i\kern-.025em b}\kern-.08em
                 T\kern-.1667em\lower.7ex\hbox{E}\kern-.125emX}}}{}%
\def\AmS{{\protect\usefont{OMS}{cmsy}{m}{n}%
                A\kern-.1667em\lower.5ex\hbox{M}\kern-.125emS}}}{}%
\def\@@eqncr{\let\@tempa\relax
    \ifcase\@eqcnt \def\@tempa{& & &}\or \def\@tempa{& &}%
      \else \def\@tempa{&}\fi
     \@tempa
     \if@eqnsw
        \iftag@
           \@taggnum
        \else
           \@eqnnum\stepcounter{equation}%
        \fi
     \fi
     \global\tag@false
     \global\@eqnswtrue
     \global\@eqcnt\z@\cr}
\def\TCItag{\@ifnextchar*{\@TCItagstar}{\@TCItag}}
\def\@TCItag#1{%
    \global\tag@true
    \global\def\@taggnum{(#1)}}
\def\@TCItagstar*#1{%
    \global\tag@true
    \global\def\@taggnum{#1}}
\def\tint{\mathop{\textstyle \int}}%
\def\tsum{\mathop{\textstyle \sum }}%
\def\tprod{\mathop{\textstyle \prod }}%
\def\tbigoplus{\mathop{\textstyle \bigoplus }}%
\def\tbigsqcup{\mathop{\textstyle \bigsqcup }}%
\def\tbigcup{\mathop{\textstyle \bigcup }}%
\def\tbigotimes{\mathop{\textstyle \bigotimes }}%
\def\ExitTCILatex{\makeatother }
\if@compatibility\message{amsmath already loaded}\fi\aftergroup\ExitTCILatex}
\if@compatibility\message{amstex already loaded}\fi\aftergroup\ExitTCILatex}
\if@compatibility\message{amsgen already loaded}\fi\aftergroup\ExitTCILatex}
\let\DOTSI\relax
\def\RIfM@{\relax\ifmmode}%
\def\FN@{\futurelet\next}%
\def\iint{\DOTSI\intno@\tw@\FN@\ints@}%
\def\iiint{\DOTSI\intno@\thr@@\FN@\ints@}%
\def\iiiint{\DOTSI\intno@4 \FN@\ints@}%
\def\idotsint{\DOTSI\intno@\z@\FN@\ints@}%
\def\ints@{\findlimits@\ints@@}%
\newif\iflimtoken@
\newif\iflimits@
\def\findlimits@{\limtoken@true\ifx\next\limits\limits@true
 \else\ifx\next\nolimits\limits@false\else
 \limtoken@false\ifx\ilimits@\nolimits\limits@false\else
 \ifinner\limits@false\else\limits@true\fi\fi\fi\fi}%
\def\multint@{\int\ifnum\intno@=\z@\intdots@                          
 \else\intkern@\fi                                                    
 \ifnum\intno@>\tw@\int\intkern@\fi                                   
 \ifnum\intno@>\thr@@\int\intkern@\fi                                 
 \int}
\def\multintlimits@{\intop\ifnum\intno@=\z@\intdots@\else\intkern@\fi
 \ifnum\intno@>\tw@\intop\intkern@\fi
 \ifnum\intno@>\thr@@\intop\intkern@\fi\intop}%
\def\intic@{%
    \mathchoice{\hskip.5em}{\hskip.4em}{\hskip.4em}{\hskip.4em}}%
\def\negintic@{\mathchoice
 {\hskip-.5em}{\hskip-.4em}{\hskip-.4em}{\hskip-.4em}}%
\def\ints@@{\iflimtoken@                                              
 \def\ints@@@{\iflimits@\negintic@
   \mathop{\intic@\multintlimits@}\limits                             
  \else\multint@\nolimits\fi                                          
  \eat@}
 \else                                                                
 \def\ints@@@{\iflimits@\negintic@
  \mathop{\intic@\multintlimits@}\limits\else
  \multint@\nolimits\fi}\fi\ints@@@}%
\def\intkern@{\mathchoice{\!\!\!}{\!\!}{\!\!}{\!\!}}%
\def\plaincdots@{\mathinner{\cdotp\cdotp\cdotp}}%
\def\intdots@{\mathchoice{\plaincdots@}%
 {{\cdotp}\mkern1.5mu{\cdotp}\mkern1.5mu{\cdotp}}%
 {{\cdotp}\mkern1mu{\cdotp}\mkern1mu{\cdotp}}%
 {{\cdotp}\mkern1mu{\cdotp}\mkern1mu{\cdotp}}}%
\def\RIfM@{\relax\protect\ifmmode}
\def\text{\RIfM@\expandafter\text@\else\expandafter\mbox\fi}
\let\nfss@text\text
\def\text@#1{\mathchoice
   {\textdef@\displaystyle\f@size{#1}}%
   {\textdef@\textstyle\tf@size{\firstchoice@false #1}}%
   {\textdef@\textstyle\sf@size{\firstchoice@false #1}}%
   {\textdef@\textstyle \ssf@size{\firstchoice@false #1}}%
   \glb@settings}
\def\textdef@#1#2#3{\hbox{{%
                    \everymath{#1}%
                    \let\f@size#2\selectfont
                    #3}}}
\newif\iffirstchoice@
\def\Let@{\relax\iffalse{\fi\let\\=\cr\iffalse}\fi}%
\def\vspace@{\def\vspace##1{\crcr\noalign{\vskip##1\relax}}}%
\def\multilimits@{\bgroup\vspace@\Let@
 \baselineskip\fontdimen10 \scriptfont\tw@
 \advance\baselineskip\fontdimen12 \scriptfont\tw@
 \lineskip\thr@@\fontdimen8 \scriptfont\thr@@
 \lineskiplimit\lineskip
 \vbox\bgroup\ialign\bgroup\hfil$\m@th\scriptstyle{##}$\hfil\crcr}%
\def\Sb{_\multilimits@}%
\def\endSb{\crcr\egroup\egroup\egroup}%
\def\Sp{^\multilimits@}%
\newdimen\ex@
\def\rightarrowfill@#1{$#1\m@th\mathord-\mkern-6mu\cleaders
 \hbox{$#1\mkern-2mu\mathord-\mkern-2mu$}\hfill
 \mkern-6mu\mathord\rightarrow$}%
\def\leftarrowfill@#1{$#1\m@th\mathord\leftarrow\mkern-6mu\cleaders
 \hbox{$#1\mkern-2mu\mathord-\mkern-2mu$}\hfill\mkern-6mu\mathord-$}%
\def\leftrightarrowfill@#1{$#1\m@th\mathord\leftarrow
\mkern-6mu\cleaders
 \hbox{$#1\mkern-2mu\mathord-\mkern-2mu$}\hfill
 \mkern-6mu\mathord\rightarrow$}%
\def\overrightarrow{\mathpalette\overrightarrow@}%
\def\overrightarrow@#1#2{\vbox{\ialign{##\crcr\rightarrowfill@#1\crcr
 \noalign{\kern-\ex@\nointerlineskip}$\m@th\hfil#1#2\hfil$\crcr}}}%
\def\overleftarrow{\mathpalette\overleftarrow@}%
\def\overleftarrow@#1#2{\vbox{\ialign{##\crcr\leftarrowfill@#1\crcr
 \noalign{\kern-\ex@\nointerlineskip}$\m@th\hfil#1#2\hfil$\crcr}}}%
\def\overleftrightarrow{\mathpalette\overleftrightarrow@}%
\def\overleftrightarrow@#1#2{\vbox{\ialign{##\crcr
   \leftrightarrowfill@#1\crcr
 \noalign{\kern-\ex@\nointerlineskip}$\m@th\hfil#1#2\hfil$\crcr}}}%
\def\underrightarrow{\mathpalette\underrightarrow@}%
\def\underrightarrow@#1#2{\vtop{\ialign{##\crcr$\m@th\hfil#1#2\hfil
  $\crcr\noalign{\nointerlineskip}\rightarrowfill@#1\crcr}}}%
\def\underleftarrow{\mathpalette\underleftarrow@}%
\def\underleftarrow@#1#2{\vtop{\ialign{##\crcr$\m@th\hfil#1#2\hfil
  $\crcr\noalign{\nointerlineskip}\leftarrowfill@#1\crcr}}}%
\def\underleftrightarrow{\mathpalette\underleftrightarrow@}%
\def\underleftrightarrow@#1#2{\vtop{\ialign{##\crcr$\m@th
  \hfil#1#2\hfil$\crcr
 \noalign{\nointerlineskip}\leftrightarrowfill@#1\crcr}}}%
\def\qopnamewl@#1{\mathop{\operator@font#1}\nlimits@}
\let\nlimits@\displaylimits
\def\setboxz@h{\setbox\z@\hbox}
\def\varlim@#1#2{\mathop{\vtop{\ialign{##\crcr
 \hfil$#1\m@th\operator@font lim$\hfil\crcr
 \noalign{\nointerlineskip}#2#1\crcr
 \noalign{\nointerlineskip\kern-\ex@}\crcr}}}}
 \def\rightarrowfill@#1{\m@th\setboxz@h{$#1-$}\ht\z@\z@
  $#1\copy\z@\mkern-6mu\cleaders
  \hbox{$#1\mkern-2mu\box\z@\mkern-2mu$}\hfill
  \mkern-6mu\mathord\rightarrow$}
\def\leftarrowfill@#1{\m@th\setboxz@h{$#1-$}\ht\z@\z@
  $#1\mathord\leftarrow\mkern-6mu\cleaders
  \hbox{$#1\mkern-2mu\copy\z@\mkern-2mu$}\hfill
  \mkern-6mu\box\z@$}
\def\projlim{\qopnamewl@{proj\,lim}}
\def\injlim{\qopnamewl@{inj\,lim}}
\def\varinjlim{\mathpalette\varlim@\rightarrowfill@}
\def\varprojlim{\mathpalette\varlim@\leftarrowfill@}
\def\varliminf{\mathpalette\varliminf@{}}
\def\varliminf@#1{\mathop{\underline{\vrule\@depth.2\ex@\@width\z@
   \hbox{$#1\m@th\operator@font lim$}}}}
\def\varlimsup{\mathpalette\varlimsup@{}}
\def\varlimsup@#1{\mathop{\overline
  {\hbox{$#1\m@th\operator@font lim$}}}}
\def\align{\@verbatim \frenchspacing\@vobeyspaces \@alignverbatim
You are using the "align" environment in a style in which it is not defined.}
\let\csname endalign*\endcsname =\endtrivlist
\def\alignat{\@verbatim \frenchspacing\@vobeyspaces \@alignatverbatim
You are using the "alignat" environment in a style in which it is not defined.}
\let\csname endalignat*\endcsname =\endtrivlist
\def\xalignat{\@verbatim \frenchspacing\@vobeyspaces \@xalignatverbatim
You are using the "xalignat" environment in a style in which it is not defined.}
\let\csname endxalignat*\endcsname =\endtrivlist
\def\gather{\@verbatim \frenchspacing\@vobeyspaces \@gatherverbatim
You are using the "gather" environment in a style in which it is not defined.}
\let\csname endgather*\endcsname =\endtrivlist
\def\multiline{\@verbatim \frenchspacing\@vobeyspaces \@multilineverbatim
You are using the "multiline" environment in a style in which it is not defined.}
\let\csname endmultiline*\endcsname =\endtrivlist
\def\arrax{\@verbatim \frenchspacing\@vobeyspaces \@arraxverbatim
You are using a type of "array" construct that is only allowed in AmS-LaTeX.}
\def\tabulax{\@verbatim \frenchspacing\@vobeyspaces \@tabulaxverbatim
You are using a type of "tabular" construct that is only allowed in AmS-LaTeX.}
\let\csname endarrax*\endcsname =\endtrivlist
\let\csname endtabulax*\endcsname =\endtrivlist
 \def\endequation{%
     \ifmmode\ifinner 
      \iftag@
        \addtocounter{equation}{-1} 
        $\hfil
           \displaywidth\linewidth\@taggnum\egroup \endtrivlist
        \global\tag@false
        \global\@ignoretrue   
      \else
        $\hfil
           \displaywidth\linewidth\@eqnnum\egroup \endtrivlist
        \global\tag@false
        \global\@ignoretrue 
      \fi
     \else   
      \iftag@
        \addtocounter{equation}{-1} 
        \eqno \hbox{\@taggnum}
        \global\tag@false%
        $$\global\@ignoretrue
      \else
        \eqno \hbox{\@eqnnum}
        $$\global\@ignoretrue
      \fi
     \fi\fi
 } 
 \newif\iftag@ \tag@false
 \def\TCItag{\@ifnextchar*{\@TCItagstar}{\@TCItag}}
 \def\@TCItag#1{%
     \global\tag@true
     \global\def\@taggnum{(#1)}}
 \def\@TCItagstar*#1{%
     \global\tag@true
     \global\def\@taggnum{#1}}
     \def\tag{\@ifnextchar*{\@tagstar}{\@tag}}
     \def\@tag#1{%
         \global\tag@true
         \global\def\@taggnum{(#1)}}
     \def\@tagstar*#1{%
         \global\tag@true
         \global\def\@taggnum{#1}}
\renewcommand{\epsilon}{\varepsilon}
\DeclareMathOperator{\proj}{proj}
\DeclareMathOperator{\Hom}{Hom}
\DeclareMathOperator{\GL}{GL}
\renewcommand{\phi}{\varphi}
\begin{document}
\title{Triple product $p$-adic $L$-functions for balanced weights}
\author{Matthew Greenberg}
\author{Marco Adamo Seveso}
\subjclass[2010]{11F67}

\begin{abstract}
We construct $p$-adic triple product $L$-functions that interpolate (square
roots of) central critical $L$-values in the balanced region. Thus, our
construction complements that of M. Harris and J. Tilouine.

There are four central critical regions for the triple product $L$-functions
and two opposite settings, according to the sign of the functional equation.
In the first case, three of these regions are of interpolation, having
positive sign; they are called the unbalanced regions and one gets three $p$%
-adic $L$-functions, one for each region of interpolation (this is the
Harris-Tilouine setting). In the other setting there is only one region of
interpolation, called the balanced region. An especially interesting feature
of our construction is that we get three different $p$-adic triple product $%
L $-functions with the same (balanced) region of interpolation. To the best
of the authors' knowledge, this is the first case where an interpolation
problem is solved on a single critical region by different $p$-adic $L$%
-functions at the same time. This is possible due to the structure of the
Euler-like factors at $p$ arising in the interpolation formulas, the
vanishing of which are related to the dimensions of certain Nekovar period
spaces. Our triple product $p$-adic $L$-functions arise as specializations
of $p$-adic period integrals interpolating normalizations of the local
archimedean period integrals. The latter encode information about classical
representation theoretic branching laws. The main step in our construction
of $p$-adic period integrals is showing that these branching laws vary in a $%
p$-adic analytic fashion. This relies crucially on the Ash-Stevens theory of
highest weight representations over affinoid algebras.
\end{abstract}

\maketitle
\tableofcontents

\section{Introduction}

Consider three $p$-adic Coleman families $\mathbf{f}=\left( \mathbf{f}_{1},%
\mathbf{f}_{2},\mathbf{f}_{3}\right) $ of tame levels $\left(
N_{1},N_{2},N_{3}\right) $, parametrized by some admissible open $U_{\mathbf{%
f}}\subset \mathcal{X}^{3}$, where $\mathcal{X}$ denotes the weight space.
For an arithmetic point $\underline{k}=\left( k_{1}^{\varepsilon
_{1}},k_{2}^{\varepsilon _{2}},k_{3}^{\varepsilon _{3}}\right) $ of weight $%
\left( k_{1},k_{2},k_{3}\right) $ and nebentype $\left( \varepsilon
_{1},\varepsilon _{2},\varepsilon _{3}\right) $, let us write $\mathbf{f}_{%
\underline{k}}=\left( \mathbf{f}_{1,k_{1}},\mathbf{f}_{2,k_{2}},\mathbf{f}%
_{3,k_{3}}\right) $ for the specialization of $\mathbf{f}$, which is the $p$%
-stabilization of some $\mathbf{f}_{\underline{k}}^{\#}=\left( \mathbf{f}%
_{1,k_{1}}^{\#},\mathbf{f}_{2,k_{2}}^{\#},\mathbf{f}_{3,k_{3}}^{\#}\right) $
for almost every arithmetic point. Hence we have%
\begin{equation*}
\mathbf{f}_{i,k_{i}}^{\#}=f_{i,k_{i}}\in S_{k_{i}+2}\left( \Gamma _{0}\left(
N_{i}\right) ,\varepsilon _{i}\right) ^{N_{i}\mathrm{-new}}\text{.}
\end{equation*}%
The problem we are interested in is about interpolating the function%
\begin{equation*}
\underline{k}:=\left( k_{1},k_{2},k_{3}\right) \mapsto L\left(
f_{k_{1}}\times f_{k_{2}}\times f_{k_{3}},c_{\underline{k}}\right)
\end{equation*}%
for the central critical value $c_{\underline{k}}:=\frac{k_{1}+k_{2}+k_{3}+4%
}{2}$. Here $L\left( f_{k_{1}}\times f_{k_{2}}\times f_{k_{3}},s\right) $ is
the triple product complex $L$-function. Let us write $\pi _{i}:=\pi
_{f_{i,k_{i}}}$ for the automorphic representation attached to $%
f_{i}:=f_{i,k_{i}}$. If we want $\Pi _{\mathbf{f}_{\underline{k}}^{\#}}:=\pi
_{1}\otimes \pi _{2}\otimes \pi _{3}$ to be selfdual, the condition $%
\varepsilon _{1}\varepsilon _{2}\varepsilon _{3}=1$ needs to be imposed.
There are four central critical regions, namely%
\begin{eqnarray*}
&&\Sigma _{1}:=\left\{ \left( k_{1}^{\varepsilon _{1}},k_{2}^{\varepsilon
_{2}},k_{3}^{\varepsilon _{3}}\right) :k_{1}>k_{2}+k_{3},\varepsilon
_{1}\varepsilon _{2}\varepsilon _{3}=1\right\} \text{,} \\
&&\Sigma _{2}:=\left\{ \left( k_{1}^{\varepsilon _{1}},k_{2}^{\varepsilon
_{2}},k_{3}^{\varepsilon _{3}}\right) :k_{2}>k_{1}+k_{3},\varepsilon
_{1}\varepsilon _{2}\varepsilon _{3}=1\right\} \text{,} \\
&&\Sigma _{3}:=\left\{ \left( k_{1}^{\varepsilon _{1}},k_{2}^{\varepsilon
_{2}},k_{3}^{\varepsilon _{3}}\right) :k_{3}>k_{1}+k_{2},\varepsilon
_{1}\varepsilon _{2}\varepsilon _{3}=1\right\} \text{,} \\
&&\Sigma _{123}:=\left\{ \left( k_{1}^{\varepsilon _{1}},k_{2}^{\varepsilon
_{2}},k_{3}^{\varepsilon _{3}}\right) :k_{1}\leq k_{2}+k_{3},k_{2}\leq
k_{1}+k_{3},k_{3}\leq k_{1}+k_{2},\varepsilon _{1}\varepsilon
_{2}\varepsilon _{3}=1\right\} \text{.}
\end{eqnarray*}%
The transcendental nature of the Deligne's period $\Omega $ depends on the
critical region. We have, up to powers of $\pi $,%
\begin{equation*}
\Omega =\left\langle f_{i},f_{i}\right\rangle ^{2}\text{ on }\Sigma _{i}%
\text{ and }\Omega =\left\langle f_{1},f_{1}\right\rangle ^{2}\left\langle
f_{2},f_{2}\right\rangle ^{2}\left\langle f_{3},f_{3}\right\rangle ^{2}\text{
on }\Sigma _{123}\text{.}
\end{equation*}

Let $S_{i}$ be the set of places such that $\pi _{i,v}$ admits a
Jacquet-Langlands lift $\pi _{i,v}^{D}$\ to the group of units of the
division $\mathbb{Q}_{v}$-algebra. Set $S:=S_{1}\cap S_{2}\cap S_{3}$ and,
for every $v\in S$, let $d_{v}$ (resp. $d_{v}^{D}$) be the dimension of the
space of trilinear forms on $\pi _{1,v}\otimes \pi _{2,v}\otimes \pi _{3,v}$
(resp. $\pi _{1,v}^{D}\otimes \pi _{2,v}^{D}\otimes \pi _{3,v}^{D}$).
Define, for every $v\in S$,%
\begin{equation*}
\varepsilon _{v}\left( f_{1}\times f_{2}\times f_{3}\right) =\left\{ 
\begin{array}{cc}
1 & \text{if }d_{v}=1\text{ and }d_{v}^{D}=0 \\ 
-1 & \text{if }d_{v}=0\text{ and }d_{v}^{D}=1\text{.}%
\end{array}%
\right.
\end{equation*}%
It is a theorem of Prasad (see \cite{Pr})\ that the above function is indeed
well defined, i.e. only one of the above two possibilities occurs. Write $%
S=S^{+}\sqcup S^{-}$, where $S^{\pm }:=\left\{ v:\varepsilon _{v}\left(
f_{1}\times f_{2}\times f_{3}\right) =\pm 1\right\} $ and set $%
D:=\tprod\nolimits_{l\in S^{-}-\left\{ \infty \right\} }l$. Recalling the
dependence of these consideration from the weight, so that $S^{-}=S_{\mathbf{%
f}_{\underline{k}}^{\#}}^{-}$ (resp. $D=D_{\mathbf{f}_{\underline{k}}^{\#}}$%
), the sign of the function equation at $\underline{k}$ is given by the
formula%
\begin{equation*}
\varepsilon \left( \mathbf{f}_{\underline{k}}^{\#}\right)
=\tprod\nolimits_{v\in S}\varepsilon _{v}\left( \mathbf{f}_{\underline{k}%
}^{\#}\right) :=\tprod\nolimits_{v\in S}\varepsilon _{v}\left( f_{1}\times
f_{2}\times f_{3}\right) =\left( -1\right) ^{\#S_{\mathbf{f}_{\underline{k}%
}^{\#}}^{-}}\text{.}
\end{equation*}%
Let $\varepsilon _{\mathrm{fin}}\left( \mathbf{f}_{\underline{k}%
}^{\#}\right) $ be the product of the finite local signs, so that $%
\varepsilon \left( \mathbf{f}_{\underline{k}}^{\#}\right) =\varepsilon _{%
\mathrm{fin}}\left( \mathbf{f}_{\underline{k}}^{\#}\right) \varepsilon
_{\infty }\left( \mathbf{f}_{\underline{k}}^{\#}\right) $. We remark that
the nature of the local sign at infinity depends on the critical region: we
have $\varepsilon _{\infty }\left( \mathbf{f}_{\underline{k}}^{\#}\right) =1$
if $\underline{k}\in \Sigma _{i}$ for $i=1$, $2$ or $3$, while $\varepsilon
_{\infty }\left( \mathbf{f}_{\underline{k}}^{\#}\right) =-1$ if $\underline{k%
}\in \Sigma _{123}$. On the other hand, assuming that each $N_{i}$ is
squarefree, it is easy to see that $\varepsilon _{\mathrm{fin}}\left( 
\mathbf{f}_{\underline{k}}^{\#}\right) \equiv 1$ or $\varepsilon _{\mathrm{%
fin}}\left( \mathbf{f}_{\underline{k}}^{\#}\right) \equiv -1$ for every
arithmetic $\underline{k}$. Indeed, under this assumption we have, for every
arithmetic weight with trivial nebentype and every finite $v=l\in S_{\mathbf{%
f}_{\underline{k}}^{\#}}^{-}$,%
\begin{equation*}
\varepsilon _{l}\left( \mathbf{f}_{\underline{k}}^{\#}\right) =-a_{l}\left(
f_{1,k_{1}}\right) a_{l}\left( f_{2,k_{2}}\right) a_{l}\left(
f_{3,k_{3}}\right) l^{-\frac{k_{1}+k_{2}+k_{3}}{2}}\text{,}
\end{equation*}%
a function which can be $p$-adically interpolated and then needs to be
constant for all weights. Hence, having fixed $\mathbf{f}$ there is a well
defined finite "generic sign" $\varepsilon _{\mathrm{fin}}\left( \mathbf{f}%
\right) $ of the family and a well-posed interpolation problem. As explained
below, we can prove cases where this generic sign is well defined also when
the $N_{i}$s are not assumed to be squarefree, essentially because our $p$%
-adic interpolation technique avoids choosing test vectors and, hence,
having an a priori well-posed problem. Of course, we expect $\varepsilon _{%
\mathrm{fin}}\left( \mathbf{f}\right) $ to be defined in general. At this
point the consideration splits in two cases.

If $\varepsilon _{\mathrm{fin}}\left( \mathbf{f}\right) =1$ (hence $D$ is
the product of an \emph{even} number of primes), then $\varepsilon \left( 
\mathbf{f}_{\underline{k}}^{\#}\right) =1$ if and only if $\underline{k}\in
\Sigma _{1}$, $\Sigma _{2}$ or $\Sigma _{3}$. As expected, one gets three
(square root)$\ p$-adic $L$-function $\mathcal{L}^{\Sigma _{i}}\left( 
\mathbf{f}\right) $, one for every region $\Sigma _{i}$, with the property
that%
\begin{equation*}
\mathcal{L}^{\Sigma _{i}}\left( \mathbf{f}\right) \left( \underline{k}%
\right) ^{2}\overset{\cdot }{=}L\left( f_{k_{1}}\times f_{k_{2}}\times
f_{k_{3}},c_{\underline{k}}\right) \text{ for }\underline{k}\in \Sigma _{i}%
\text{.}
\end{equation*}%
Here we write $\overset{\cdot }{=}$ to mean equality up to explicit factors.
On the other hand, $L\left( f_{k_{1}}\times f_{k_{2}}\times f_{k_{3}},c_{%
\underline{k}}\right) =0$ when $\underline{k}\in \Sigma _{123}$ because of
the sign of the functional equation and the interpolation problem on $\Sigma
_{123}$\ is trivial. This is the Harris-Tilouine setting studied in \cite{HT}%
, under some ordinariness assumption and supposing $D=1$. These $p$-adic $L$%
-function have recently found interesting applications in \cite{DR1} and 
\cite{DR2}.

When $\varepsilon _{\mathrm{fin}}\left( \mathbf{f}\right) =-1$ (hence $D$ is
the product of an \emph{odd} number of primes), then $\varepsilon \left( 
\mathbf{f}_{\underline{k}}^{\#}\right) =1$ if and only if $\underline{k}\in
\Sigma _{123}$. The interpolation problem is therefore non-trivial only in
the balanced region. An unexpected new phenomenon is that, though we have
only one region of interpolation, we get in a natural way three (square root)%
$\ p$-adic $L$-functions $\mathcal{L}_{i}^{\Sigma _{123}}\left( \mathbf{f}%
\right) $ for $i=1$, $2$ or $3$, all of them interpolating in the same
region $\Sigma _{123}$:%
\begin{equation*}
\mathcal{L}_{i}^{\Sigma _{123}}\left( \mathbf{f}\right) \left( \underline{k}%
\right) ^{2}\overset{\cdot }{=}L\left( f_{k_{1}}\times f_{k_{2}}\times
f_{k_{3}},c_{\underline{k}}\right) \text{ for }\underline{k}\in \Sigma _{123}%
\text{.}
\end{equation*}%
Of course, since they interpolate in the same region and they are distinct,
they have different Euler factors $\mathcal{E}_{i}\left( \mathbf{f}\right) $%
. Let $V_{\mathbf{f}_{i}}$ be the $p$-adic representation attached to the
Coleman family $\mathbf{f}_{i}$ and set $V_{\mathbf{f}}:=V_{\mathbf{f}%
_{1}}\otimes V_{\mathbf{f}_{2}}\otimes V_{\mathbf{f}_{3}}$. Let $\widetilde{H%
}_{f,\Sigma _{123}}^{1}\left( \mathbb{Q},V_{\mathbf{f}}\right) $ be the
Nekovar extended Selmer group attached to $V_{\mathbf{f}}$ and the balanced
region $\Sigma _{123}$ (see \cite{Nek} and \cite{Pot} for the extension to
non-ordinary families). This means that $\widetilde{H}_{f,\Sigma
_{123}}^{1}\left( \mathbb{Q},V_{\mathbf{f}}\right) $ interpolates the Selmer
groups $\widetilde{H}_{f}^{1}\left( \mathbb{Q},V_{\mathbf{f}_{\underline{k}%
}^{\#}}\right) $ defined by the Bloch-Kato conditions at every $\underline{k}%
\in \Sigma _{123}$. It is a cohomological avatar of the extended Selmer
group of an elliptic curve and contains a period space $H^{0}\left( \mathbb{Q%
}_{p},V_{\mathbf{f}_{\underline{k}}^{\#}}\right) $. More precisely, the
Bloch-Kato conditions at $\underline{k}\in \Sigma _{123}$ are defined by
means of an exact sequence%
\begin{equation*}
0\rightarrow V_{\mathbf{f}_{\underline{k}}^{\#}}^{+}\rightarrow V_{\mathbf{f}%
_{\underline{k}}^{\#}}\rightarrow V_{\mathbf{f}_{\underline{k}%
}^{\#}}^{-}\rightarrow 0
\end{equation*}%
of $G_{\mathbb{Q}_{p}}$-modules (or $\left( \varphi ,\Gamma \right) $%
-modules, if we allow the families to be non-ordinary). We can interpolate
this exact sequence in a family%
\begin{equation*}
0\rightarrow V_{\mathbf{f}}^{+}\rightarrow V_{\mathbf{f}}\rightarrow V_{%
\mathbf{f}}^{-}\rightarrow 0
\end{equation*}%
which defines $\widetilde{H}_{f,\Sigma _{123}}^{1}\left( \mathbb{Q},V_{%
\mathbf{f}}\right) $. Under mild conditions, there is an exact sequence%
\begin{equation*}
0\rightarrow H^{0}\left( \mathbb{Q}_{p},V_{\mathbf{f}_{\underline{k}%
}^{\#}}\right) \rightarrow \widetilde{H}_{f}^{1}\left( \mathbb{Q},V_{\mathbf{%
f}_{\underline{k}}^{\#}}\right) \rightarrow H_{f}^{1}\left( \mathbb{Q},V_{%
\mathbf{f}_{\underline{k}}^{\#}}\right) \rightarrow 0
\end{equation*}%
where $H_{f}^{1}\left( \mathbb{Q},V_{\mathbf{f}_{\underline{k}}^{\#}}\right) 
$ is the usual Bloch-Kato Selmer group at $\underline{k}\in \Sigma _{123}$.
It can be proved that the vanishing of these Euler factors $\mathcal{E}%
_{i}\left( \mathbf{f}\right) $\ at some $\underline{k}\in \Sigma _{123}$ is
related to the dimension of this Nekovar period space at $\underline{k}\in
\Sigma _{123}$, which is almost three. More precisely, it can be proved that 
$H^{0}\left( \mathbb{Q}_{p},V_{\mathbf{f}_{\underline{k}}^{\#}}\right) \neq
0 $ if and only if $\mathcal{E}_{i}\left( \mathbf{f}\right) \left( 
\underline{k}\right) =0$ for some $i\in \left\{ 1,2,3\right\} $. In this
case, there are two possibilities: either $H^{0}\left( \mathbb{Q}_{p},V_{%
\mathbf{f}_{\underline{k}}^{\#}}\right) $ is two dimensional and only one
form $\mathbf{f}_{i,k_{i}}^{\#}=\mathbf{f}_{i,k_{i}}$ is new at $p$, say $%
i=3 $, and then $\mathcal{E}_{1}\left( \mathbf{f}\right) \left( \underline{k}%
\right) =\mathcal{E}_{2}\left( \mathbf{f}\right) \left( \underline{k}\right)
=0$; or $H^{0}\left( \mathbb{Q}_{p},V_{\mathbf{f}_{\underline{k}%
}^{\#}}\right) $ is three dimensional, all the modular forms $\mathbf{f}%
_{i,k_{i}}^{\#}=\mathbf{f}_{i,k_{i}}$ are new at $p$ and then $\mathcal{E}%
_{1}\left( \mathbf{f}\right) \left( \underline{k}\right) =\mathcal{E}%
_{2}\left( \mathbf{f}\right) \left( \underline{k}\right) =\mathcal{E}%
_{3}\left( \mathbf{f}\right) \left( \underline{k}\right) =0$. We refer the
reader to \cite{BSV} for an investigation of this kind of exceptional zero
phenomenona, a kind of exotic analogue of those discovered in \cite{MTT} and
studied in \cite{GrSt}.

In particular, by deforming $p$-new families, it may happen that two of
these $p$-adic $L$-function vanish at some point and the other does not,
showing that they are independent from the Iwasawa theoretic point of view,
because they generate different ideals in contrast with the fact that, by
the usual Iwasawa main conjecture flavour, one should expect all of them to
be related to the same characteristic ideal $\chi _{\Sigma _{123}}\left( 
\mathbf{f}\right) $\ attached to $\widetilde{H}_{f,\Sigma _{123}}^{1}\left( 
\mathbb{Q},V_{\mathbf{f}}\right) $. More precisely, we do not have $\left(
\chi _{\Sigma _{123}}\left( \mathbf{f}\right) \right) =\left( \mathcal{L}%
_{i}^{\Sigma _{123}}\left( \mathbf{f}\right) \right) $ in the Iwasawa
algebra $\Lambda _{\mathbf{f}}\subset \mathcal{O}\left( U_{\mathbf{f}%
}\right) $ (suppose all the families to be ordinary, for simplicity). In
fact, we expect our $p$-adic $L$-functions to encode a finer information,
related to the matrix coefficients of Nekovar weight pairing (see \cite{BSV}
and also \cite{V1} and \cite{V2} for the definition of the pairing).

The approach we take in the $p$-adic interpolation process is inspired from
Ichino's formula \cite{Ich}, as reformulated in \cite{GS2}. More precisely,
based on \cite{HK},\ it is proved in \cite{Ich} that we have the equality%
\begin{equation}
I_{\underline{k}}\left( \phi \right) ^{2}=\frac{1}{2^{3}}\frac{\zeta _{%
\mathbb{Q}}^{2}\left( 2\right) L\left( 1/2,\Pi _{\underline{k}}\right) }{%
L\left( 1,\Pi ,\mathrm{Ad}\right) }\prod\nolimits_{v}\alpha _{v}\left( \phi
_{v}\right)  \label{Intro F special value}
\end{equation}%
for $\phi =\otimes _{v}\phi _{v}\in \Pi ^{D_{\mathbf{f}_{\underline{k}%
}^{\#}}}$, the Jacquet-Langlands lift of $\Pi _{\mathbf{f}_{\underline{k}%
}^{\#}}$ to the quaternion algebra $B=B_{\mathbf{f}_{\underline{k}}^{\#}}$
of discriminant $D=D_{\mathbf{f}_{\underline{k}}^{\#}}$. Here $\alpha _{v}$
are integral of matrix coefficients and $I_{\underline{k}}$ is a global
period integral (see \cite{Ich}). In Theorem \ref{T Main} we\ prove an
equality between functionals, showing that there are three linear forms on
quaternionic families of $p$-adic modular forms interpolating the global
period integral $I_{\underline{k}}$\ that appears in Ichino's formula,
independently of any consideration about the explicit form of the test
vectors putting restrictions on the levels. As a consequence, we get
applications to the existence of $p$-adic families of test vectors over
Zariski open subsets of three copies $\mathcal{X}^{3}$\ of the weight space.
As explained, both the quaternion algebra $B=B_{\mathbf{f}}$ or discriminant 
$D=D_{\mathbf{f}}$ and the resulting interpolation region\ we have to deal
with are not a priori well defined in general (but they are in the
squarefree case). Another application of our result is the following
conditional result, which however removes any level assumption. Let $D_{%
\mathbf{f}_{\underline{k}}^{\#}}$ be the discriminant predicted by \cite{Pr}
and write $\Pi ^{D_{\mathbf{f}_{\underline{k}}^{\#}}}$ for the
Jacquet-Langlands lifts of $\Pi _{\mathbf{f}_{\underline{k}}^{\#}}$. The
Jacquet conjecture proved in \cite{HK} tells us that $L\left( \Pi _{\mathbf{f%
}_{\underline{k}}^{\#}},1/2\right) \neq 0$ if and only if $I_{\underline{k}%
}\circ \Pi ^{D_{\mathbf{f}_{\underline{k}}^{\#}}}\neq 0$. The following
results are immediate applications of our result.

\begin{itemize}
\item[$\left( A\right) $] If $L\left( \Pi _{\mathbf{f}_{\underline{k}%
}^{\#}},1/2\right) \neq 0$ for some arithmetic $\underline{k}$ with $D=D_{%
\mathbf{f}_{\underline{k}}^{\#}}$ divisible by an odd number of primes and
such that the Euler factors $\left( \text{\ref{F Euler}}\right) $ do not
vanish, then $D_{\mathbf{f}_{\underline{k}^{\prime }}^{\#}}=D$ for every
arithmetic $\underline{k}^{\prime }$ as above in a Zariski open subset $\phi
\neq U_{\mathbf{f}}^{\prime }$\ of $U_{\mathbf{f}}$. In particular, there is
a notion of "generic sign" $\varepsilon _{\mathrm{fin}}\left( \mathbf{f}%
\right) $ and a natural region of interpolation, namely the balanced region.

\item[$\left( B\right) $] If $L\left( \Pi _{\mathbf{f}},1/2\right) \neq 0$
for some arithmetic $\underline{k}$ with $D=D_{\mathbf{f}_{\underline{k}%
}^{\#}}$ divisible by an odd number of primes and such that the Euler
factors $\left( \text{\ref{F Euler}}\right) $ do not vanish, then there is a
quaternionic $p$-adic family $\mathbf{\varphi }$ on the definite quaternion
algebra of discriminant $D$ such that $\mathbf{\varphi }_{\underline{k}%
^{\prime }}$ is a global test vector for every $\underline{k}^{\prime }$ as
above in a Zariski open subset $\phi \neq U_{\mathbf{f}}^{\prime }$\ of $U_{%
\mathbf{f}}$. In particular, we have $L\left( \Pi _{\mathbf{f}_{\underline{k}%
^{\prime }}^{\#}},1/2\right) \neq 0$.
\end{itemize}

We expect $U_{\mathbf{f}}^{\prime }=U_{\mathbf{f}}$ and the result above
being unconditional: in order to prove this fact, an investigation of the $p$%
-adic variation of the right hand side of $\left( \text{\ref{Intro F special
value}}\right) $ is required for all possible $D$s.

\section{Modular forms and $p$-adic modular forms}

Let $B$ be a definite quaternion division $\mathbb{Q}$-algebra which is
split at the prime $p$ and let $\mathbf{B}$ (resp. $\mathbf{B}^{\times }$)
be the associated ring scheme (resp. algebraic group). We write $\mathbb{A}=%
\mathbb{A}_{f}\times \mathbb{R}$ for the adele ring of $\mathbb{Q}$ and
define $\mathbb{A}_{f}^{p}$ by the rule $\mathbb{A}_{f}=\mathbb{A}%
_{f}^{p}\times \mathbb{Q}_{p}$. We set $B_{f}:=\mathbf{B}\left( \mathbb{A}%
_{f}\right) $ (resp. $B_{f}^{\times }:=\mathbf{B}^{\times }\left( \mathbb{A}%
_{f}\right) $), $B_{f}^{\times ,p}:=\mathbf{B}\left( \mathbb{A}%
_{f}^{p}\right) $\ and $B_{v}=\mathbf{B}\left( \mathbb{Q}_{v}\right) $
(resp. $B_{v}^{\times }:=\mathbf{B}^{\times }\left( \mathbb{Q}_{v}\right) $)
if $v$ is either a finite place or $v=\infty $, so that $B_{f}^{\times
}=B_{f}^{\times ,p}\times B_{p}^{\times }$. We write $b\mapsto b^{\iota }$
for the main involution and $\mathrm{nrd}:\mathbf{B}^{\times }\rightarrow 
\mathbf{G}_{m}$ for the reduced norm.

If $\mathbf{Z}\subset \mathbf{Z}_{\mathbf{B}^{\times }}=\mathbf{G}_{m}$ is a
closed subgroup (such as the trivial subgroup or the whole center), we
define $Z_{f}:=\mathbf{Z}\left( \mathbb{A}_{f}\right) $, $Z_{v}:=\mathbf{Z}%
\left( \mathbb{Q}_{v}\right) $ and $Z_{f}^{p}:=\mathbf{Z}\left( \mathbb{A}%
_{f}^{p}\right) $, so that $Z_{f}=Z_{f}^{p}\times Z_{p}$. We will need to
consider double cosets of the form%
\begin{equation*}
\left[ B_{f}^{\times }\right] _{\mathbf{Z}}:=Z_{f}\backslash B_{f}^{\times
}/B^{\times }\text{.}
\end{equation*}%
In order to later apply the results from \cite{GS2}, we fix measures as
follows. We take the Tamagawa measure $\mu _{\mathbf{Z}\left( \mathbb{A}%
\right) \backslash \mathbf{B}^{\times }\left( \mathbb{A}\right) }$\ on $%
\mathbf{Z}\left( \mathbb{A}\right) \backslash \mathbf{B}^{\times }\left( 
\mathbb{A}\right) $ and write $\mu _{\left[ \mathbf{B}^{\times }\left( 
\mathbb{A}\right) \right] _{\mathbf{Z}}}$ for the quotient measure
(normalized in the usual way). Next we choose $\mu _{\mathbf{Z\backslash B}%
,\infty }$ on $\mathbf{Z}\left( F_{\infty }\right) \backslash \mathbf{B}%
^{\times }\left( F_{\infty }\right) \ $and $\mu :=\mu _{B_{f}^{\times }}$ on 
$B_{f}^{\times }$ such that $\mu \left( K\right) \in \mathbb{Q}$ for some
(and hence every) open and compact subgroup $K\subset B_{f}^{\times }$ and
such that, writing $\mu _{B_{f}^{\times }/B^{\times }}$ for the induced
quotient measure on $B_{f}^{\times }/B^{\times }$ (normalized in the usual
way), which restricts to an invariant measure $\mu _{\left[ B_{f}^{\times }%
\right] _{\mathbf{Z}}}$ on $C(B_{f}\backslash B_{f}^{\times }/B^{\times
})\subset C(B_{f}^{\times }/B^{\times })$,%
\begin{equation*}
\tint\nolimits_{\left[ \mathbf{B}^{\times }\left( \mathbb{A}\right) \right]
_{\mathbf{Z}}}f\left( x\right) d\mu _{\left[ \mathbf{B}^{\times }\left( 
\mathbb{A}\right) \right] _{\mathbf{Z}}}\left( x\right) =\tint\nolimits_{%
\left[ B_{f}^{\times }\right] _{\mathbf{Z}}}\left( \tint\nolimits_{\mathbf{Z}%
\left( F_{\infty }\right) \backslash \mathbf{B}^{\times }\left( F_{\infty
}\right) }f\left( x_{f}x_{\infty }\right) d\mu _{\mathbf{Z\backslash B}%
,\infty }\left( x_{\infty }\right) \right) d\mu _{\left[ B_{f}^{\times }%
\right] _{\mathbf{Z}}}\left( x_{f}\right)
\end{equation*}%
is satisfied. We let $m_{\mathbf{Z\backslash B},\infty }$ be the total
measure of $\mathbf{Z}\left( F_{\infty }\right) \backslash \mathbf{B}%
^{\times }\left( F_{\infty }\right) $.

Let $\Sigma _{0}\left( p\mathbb{Z}_{p}\right) \subset \mathbf{M}_{2}\left( 
\mathbb{Z}_{p}\right) $ be the subsemigroup of matrices having non-zero
determinant, upper left entry $a\in \mathbb{Z}_{p}^{\times }$ and lower left
entry $c\in p\mathbb{Z}_{p}$ and set $\Gamma _{0}\left( p\mathbb{Z}%
_{p}\right) :=\Sigma _{0}\left( p\mathbb{Z}_{p}\right) \cap \mathbf{GL}%
_{2}\left( \mathbb{Z}_{p}\right) $. Consider an open and compact subgroup $%
K_{p}^{\diamond }\subset B_{p}$ (it will be $\Gamma _{0}\left( p\mathbb{Z}%
_{p}\right) $). We will also need to consider a subsemigroup $%
K_{p}^{\diamond }\subset \Sigma _{p}\subset B_{p}^{\times }$ and to define $%
\Sigma _{p}\left( B_{f}^{\times }\right) :=B_{f}^{\times ,p}\times \Sigma
_{p}$ (we will take $\Sigma _{p}=\Sigma _{0}\left( p\mathbb{Z}_{p}\right) $).

\bigskip

Let $\mathcal{K}:=\mathcal{K}\left( B_{f}^{\times }\right) $ (resp. $%
\mathcal{K}^{\diamond }:=\mathcal{K}\left( B_{f}^{\times },K_{p}^{\diamond
}\right) $) be the set of open and compact subgroups $K\subset B_{f}^{\times
}$ (resp. $K=K^{p}\times K_{p}$ with $K^{p}\subset B_{f}^{\times ,p}$ and $%
K_{p}\subset K_{p}^{\diamond }$\ open and compact). If $S$ is a $%
B_{f}^{\times }$-module (resp. a $\Sigma _{p}\left( B_{f}^{\times }\right) $%
-module), then we define%
\begin{equation*}
S^{\mathcal{K}}:=\tbigcup\nolimits_{K\in \mathcal{K}}S^{K}\text{ (resp. }S^{%
\mathcal{K}^{\diamond }}:=\tbigcup\nolimits_{K\in \mathcal{K}^{\diamond
}}S^{K}\text{).}
\end{equation*}%
We note that the Hecke operators $\mathcal{H}\left( B_{f}^{\times }\right) $
(resp. $\mathcal{H}\left( \Sigma _{p}\left( B_{f}^{\times }\right) \right) $%
)\ act on $S^{\mathcal{K}}$ (resp. $S^{\mathcal{K}^{\diamond }}$) by double
cosets on $B_{f}^{\times }$ (resp. $\Sigma _{p}\left( B_{f}^{\times }\right) 
$). We describe the action on $S^{\mathcal{K}^{\diamond }}$ for a $\Sigma
_{p}\left( B_{f}^{\times }\right) $-module $S$. If $K_{1},K_{2}\in \mathcal{K%
}^{\diamond }$ and $\pi \in \Sigma _{p}\left( B_{f}^{\times }\right) $, the
space $K_{1}\backslash K_{1}\pi K_{2}$ is finite\footnote{%
Indeed note that $K_{1}\pi K_{2}$ is compact, being the image of $%
K_{1}\times K_{2}$ by means of the continuous map given by $\left(
x,y\right) \mapsto x\pi y$. Since $K_{1}$ is open, $K_{1}\pi
K_{2}=\bigsqcup\nolimits_{i}K_{1}\pi _{i}$ is an open covering which, by
compactness, admits a finite refinement.} and we may write%
\begin{equation*}
K_{1}\pi K_{2}=\tbigsqcup\nolimits_{x\in K_{1}\backslash K_{1}\pi
K_{2}}K_{1}x\text{.}
\end{equation*}%
As usual we may define%
\begin{equation*}
\left[ K_{1}\pi K_{2}\right] :S^{K_{1}}\rightarrow S^{K_{2}}
\end{equation*}%
by the rule%
\begin{equation}
v\mid \left[ K_{1}\pi K_{2}\right] =\tsum\nolimits_{x\in K_{1}\backslash
K_{1}\pi K_{2}}vx\text{.}  \label{F Hecke Def}
\end{equation}%
The mapping $u\mapsto \pi u$ induces a bijection $\left( K_{2}\cap \pi
^{-1}K_{1}\pi \right) \backslash K_{2}\rightarrow K_{1}\backslash K_{1}\pi
K_{2}$, so that we may take $x=\pi u$ in the above expression:%
\begin{equation}
v\mid \left[ K_{1}\pi K_{2}\right] =\tsum\nolimits_{u\in \left( K_{2}\cap
\pi ^{-1}K_{1}\pi \right) \backslash K_{2}}v\pi u\text{.}
\label{F Hecke Operator}
\end{equation}%
We can define in this way an action of the Hecke algebra $\mathcal{H}\left(
\Sigma _{p}\left( B_{f}^{\times }\right) \right) $ of double cosets on $%
\Sigma _{p}\left( B_{f}^{\times }\right) $. When $K_{p}^{\diamond }=G_{p}$
we have $V^{\mathcal{K}^{\diamond }}=V^{\mathcal{K}}$ and we have an action
of $\mathcal{H}\left( \Sigma _{p}\left( B_{f}^{\times }\right) \right) =%
\mathcal{H}\left( B_{f}^{\times }\right) $. Let $\mathcal{K}^{\diamond
\diamond }\subset \mathcal{K}^{\diamond }$ be the subset of those groups
such that $K_{p}=K_{p}^{\diamond }$ and write $\mathcal{H}\left( \Sigma
_{p}\right) $ for the Hecke algebra of double cosets $K\pi K$ with $\pi $
contentrated in $\pi _{p}\in \Sigma _{p}$ and $K\in \mathcal{K}^{\diamond
\diamond }$. Then $\left( \text{\ref{F Hecke Def}}\right) $ defines an
operator on $V^{\mathcal{K}^{\diamond \diamond }}=\left( V^{\mathcal{K}%
^{\diamond }}\right) ^{K_{p}^{\diamond }}$ by means of the formula $vT_{\pi
}:=v\mid \left[ K\pi K\right] $ if $v\in V^{K}$ where $K\in \mathcal{K}%
^{\diamond \diamond }$, i.e. it does not depend on $K\in \mathcal{K}%
^{\diamond \diamond }$. It follows that $V^{\mathcal{K}^{\diamond \diamond
}} $ is endowed with an action of $B_{f}^{\times ,p}\times \mathcal{H}\left(
\Sigma _{p}\right) $.

\bigskip

Let $\left( V,\rho \right) $ be a right representation of $G_{\infty }$
(resp. $\Sigma _{p}$)\ with coefficients in some commutative unitary ring $R$%
. If $g\in \mathbf{B}^{\times }\left( \mathbb{A}\right) $, we will write $%
g_{v}\in B_{v}^{\times }$ for its $v$-component. When $\rho $ is understood,
we simply write $vg_{\infty }$ (resp. $vg_{p}$)\ for $v\rho \left( g_{\infty
}\right) $ (resp.\ $v\rho \left( g_{\infty }\right) $). Fix a character $%
\omega _{0}:Z_{f}\longrightarrow R^{\times }$ (resp. $\omega
_{0,p}:Z_{f}\longrightarrow R^{\times }$). Define $S\left( B_{f}^{\times
},\rho \right) $ (resp. $S_{p}\left( B_{f}^{\times },\rho \right) $)\ to be
the space of maps $\varphi :B_{f}^{\times }\rightarrow V$ endowed with the $%
\left( \mathbf{B}^{\times }\left( \mathbb{A}\right) ,B_{f}^{\times }\right) $%
-action (resp. $\left( \mathbf{B}^{\times }\left( \mathbb{A}\right) ,\Sigma
_{p}\left( B_{f}^{\times }\right) \right) $-action)\ given by%
\begin{eqnarray*}
&&\text{ }\left( g\varphi u\right) \left( x\right) :=\varphi \left(
uxg_{f}\right) \rho \left( g_{\infty }^{-1}\right) ,\quad \text{where }g\in 
\mathbf{B}^{\times }\left( \mathbb{A}\right) \text{ and }u\in B_{f}^{\times }
\\
&&\text{ (resp. }\left( g\varphi u\right) \left( x\right) :=\varphi \left(
uxg_{f}\right) \rho \left( u_{p}\right) \text{, where }g\in \mathbf{B}%
^{\times }\left( \mathbb{A}\right) \text{ and }u\in \Sigma _{p}\left(
B_{f}^{\times }\right) \text{).}
\end{eqnarray*}%
Then%
\begin{eqnarray*}
&&\text{ }S(B_{f}^{\times },\rho ,\omega _{0}):=\{\varphi \in
S(B_{f}^{\times },\rho ):\varphi z=\omega _{0}(z)\varphi \text{ for all $%
z\in Z_{f}$}\} \\
&&\text{ (resp. }S_{p}\left( G_{f},\rho ,\omega _{0,p}\right) :=\left\{
\varphi \in S_{p}(B_{f}^{\times },\rho ):\varphi z=\omega _{0,p}(z)\varphi 
\text{ for all $z\in Z_{f}$}\right\} \text{)}
\end{eqnarray*}%
is a sub $\left( \mathbf{B}^{\times }\left( \mathbb{A}\right) ,B_{f}^{\times
}\right) $-module (resp. sub $\left( \mathbf{B}^{\times }\left( \mathbb{A}%
\right) ,\Sigma _{p}\left( B_{f}^{\times }\right) \right) $-module). We also
write%
\begin{equation*}
S\left( B_{f}^{\times }/B^{\times },\rho _{/B^{\times }},\omega _{0}\right)
:=S\left( B_{f}^{\times },\rho ,\omega _{0}\right) ^{\left( B^{\times
},1\right) }\text{ (resp. }S_{p}\left( B_{f}^{\times }/B^{\times },\rho
_{/B^{\times }},\omega _{0,p}\right) :=S_{p}\left( B_{f}^{\times },\rho
,\omega _{0,p}\right) ^{\left( B^{\times },1\right) }\text{)}
\end{equation*}%
and%
\begin{equation*}
M\left( B_{f}^{\times },\rho ,\omega _{0}\right) :=S\left( B_{f}^{\times
}/B^{\times },\rho _{/B^{\times }},\omega _{0}\right) ^{(1,\mathcal{K})}%
\text{ (resp. }M_{p}\left( B_{f}^{\times },\rho ,\omega _{0,p}\right)
:=S_{p}\left( B_{f}^{\times }/B^{\times },\rho _{/B^{\times }},\omega
_{0,p}\right) ^{\left( 1,\mathcal{K}^{\diamond }\right) }\text{).}
\end{equation*}%
The former is called the \emph{space of }$\rho $\emph{-valued modular forms}
and the latter the space of \emph{space of }$\rho $\emph{-valued }$p$\emph{%
-adic modular forms}; they are Hecke modules as explained above. We omit $%
\omega _{0}$ from the notation when $Z_{f}=1$ and write $M\left(
Z_{f}\backslash B_{f}^{\times },\rho \right) :=M\left( B_{f}^{\times },\rho
,\omega _{0,p}\right) $ when $\omega _{0}$ is the trivial character of $%
Z_{f} $. Sometimes we will abusively replace $\rho $ with the underlying
subspace $V$ in the notation. The same shorthands apply in the $p$-adic case.

The connection between modular forms and $p$-adic modular forms is the
content of the following proposition. We suppose that we have given $\omega
_{0}:Z_{f}\rightarrow R^{\times }$ and coefficient rings $i_{\infty
}:R\subset R_{\infty }$ and $i_{p}:R\subset R_{p}$. For a character $\chi $
of some group with values in $R^{\times }$, we let $i_{p\ast }\left( \chi
\right) :=i_{p}\circ \chi $ and $i_{\infty \ast }\left( \chi \right)
:=i_{\infty }\circ \chi $. We also assume that we have given a
representation $\rho _{p}$ (resp. $\rho _{\infty }$)\ of $B_{p}^{\times }$
(resp. $B_{\infty }^{\times }$) with coefficients in $R_{p}$ (resp. $%
R_{\infty }$) with the property that%
\begin{equation*}
\rho :=\rho _{p\mid B^{\times }}=\rho _{\infty \mid B^{\times }}\subset \rho
_{p},\rho _{\infty }
\end{equation*}%
with coefficients in $R$.

\begin{lemma}
\label{L1}The rules%
\begin{equation*}
\begin{array}{ccc}
M\left( B_{f}^{\times },\rho _{p}\right) \rightarrow M_{p}\left(
B_{f}^{\times },\rho _{p}\right) &  & M_{p}\left( B_{f}^{\times },\rho
_{p}\right) \rightarrow M\left( B_{f}^{\times },\rho _{p}\right) \\ 
\varphi \mapsto \psi _{\varphi }:\psi _{\varphi }\left( x\right) :=\varphi
\left( x\right) x_{p}^{-1} &  & \psi \mapsto \varphi _{\psi }:\varphi _{\psi
}\left( x\right) :=\psi \left( x\right) x_{p}%
\end{array}%
\end{equation*}%
set up a right $\Sigma _{p}\left( B_{f}^{\times }\right) $-equivariant
bijection and $M\left( B_{f}^{\times },\rho \right) \subset M\left(
B_{f}^{\times },\rho _{p}\right) $ is identified with the submodule of those 
$\psi \in M_{p}\left( B_{f}^{\times },\rho _{p}\right) $ such that $\psi
\left( x\right) \in \rho \subset \rho _{p}$ for every $x\in B_{f}^{\times }$%
. Furthermore, if $\rho _{p}$ has central character $\omega _{\rho _{p}}$
and $\left( -\right) _{p}:Z_{f}\rightarrow Z_{p}$ is the projection induced
by $B_{f}^{\times }\rightarrow B_{p}^{\times }$, then the bijection induces%
\begin{equation*}
M\left( B_{f}^{\times },\rho ,\omega _{0}\right) \subset M\left(
B_{f}^{\times },\rho _{p},i_{p\ast }\left( \omega _{0}\right) \right) \simeq
M_{p}\left( B_{f}^{\times },\rho _{p},\omega _{0,p}\right)
\end{equation*}%
with $\omega _{0,p}:=i_{p\ast }\left( \omega _{0}\right) \omega _{\rho
_{p}}^{-1}\left( \left( -\right) _{p}\right) $. These identifications and
inclusions are $\mathcal{H}\left( \Sigma _{p}\left( G_{f}\right) \right) $%
-equivariant.
\end{lemma}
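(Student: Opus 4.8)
The plan is to verify everything by direct manipulation: the two assignments $\varphi\mapsto\psi_{\varphi}$ and $\psi\mapsto\varphi_{\psi}$ are visibly reciprocal, so the substance is that each one lands in the asserted target space and is compatible with the $\Sigma_{p}(B_{f}^{\times})$- and Hecke-actions, with the central characters, and with the inclusion of coefficient systems $\rho\subset\rho_{p}$. Throughout, the one thing demanding care is to keep track simultaneously of the diagonal $B^{\times}$ defining the quotient $B_{f}^{\times}/B^{\times}$ and of the projection $x\mapsto x_{p}$ to the $p$-component, recalling that $B^{\times}$ acts on $\rho$ (and on $\rho_{p}$) through the splitting $B^{\times}\hookrightarrow B_{p}^{\times}$.

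First I would check that $\varphi\mapsto\psi_{\varphi}$ carries $M(B_{f}^{\times},\rho_{p})$ into $M_{p}(B_{f}^{\times},\rho_{p})$. Since $(x\gamma)_{p}=x_{p}\gamma$ in $B_{p}^{\times}$ for $\gamma\in B^{\times}$, the right $B^{\times}$-equivariance $\varphi(x\gamma)=\varphi(x)\gamma$ turns into plain $B^{\times}$-invariance of $\psi_{\varphi}$: $\psi_{\varphi}(x\gamma)=\varphi(x)\gamma\,\gamma^{-1}x_{p}^{-1}=\varphi(x)x_{p}^{-1}=\psi_{\varphi}(x)$. After shrinking the level of $\varphi$ so that it lies in $\mathcal{K}^{\diamond}$ — harmless, as such subgroups are cofinal — one finds, for $k$ in that level, $(1\cdot\psi_{\varphi}\cdot k)(x)=\psi_{\varphi}(kx)\rho_{p}(k_{p})=\varphi(kx)(k_{p}x_{p})^{-1}k_{p}=\varphi(x)x_{p}^{-1}=\psi_{\varphi}(x)$, so $\psi_{\varphi}\in M_{p}(B_{f}^{\times},\rho_{p})$. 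Running the same computation in reverse shows $\psi\mapsto\varphi_{\psi}$ lands in $M(B_{f}^{\times},\rho_{p})$, converting the $B^{\times}$-invariance of $\psi$ back to the equivariance $\varphi_{\psi}(x\gamma)=\varphi_{\psi}(x)\gamma$ and the $(1,\mathcal{K}^{\diamond})$-invariance of $\psi$ to the left $K$-invariance of $\varphi_{\psi}$. As $\varphi_{\psi_{\varphi}}(x)=\psi_{\varphi}(x)x_{p}=\varphi(x)$ and $\psi_{\varphi_{\psi}}(x)=\varphi_{\psi}(x)x_{p}^{-1}=\psi(x)$, the two rules are mutually inverse bijections.

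Next I would establish the equivariances. For $u\in\Sigma_{p}(B_{f}^{\times})\subset B_{f}^{\times}$ one has $\psi_{\varphi\cdot u}(x)=\varphi(ux)x_{p}^{-1}$ and $(\psi_{\varphi}\cdot u)(x)=\psi_{\varphi}(ux)\rho_{p}(u_{p})=\varphi(ux)(u_{p}x_{p})^{-1}u_{p}=\varphi(ux)x_{p}^{-1}$; the two coincide, so $\varphi\mapsto\psi_{\varphi}$ intertwines the classical $\Sigma_{p}(B_{f}^{\times})$-action with the $p$-adic one, the twist $\rho_{p}(u_{p})$ in the $p$-adic action being exactly what absorbs the change of $x_{p}$. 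Since the operators of $\mathcal{H}(\Sigma_{p}(B_{f}^{\times}))$ are finite sums of such translations by formula~(\ref{F Hecke Def}), $\mathcal{H}(\Sigma_{p}(B_{f}^{\times}))$-equivariance of the bijection is immediate.

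Finally, for the coefficients and central characters, I would observe that the bijection changes the value of a form at $x$ only by the automorphy factor $x_{p}^{\pm1}$ at $p$, so that a form of $M(B_{f}^{\times},\rho_{p})$ comes from $M(B_{f}^{\times},\rho)$ precisely when, in the $p$-adic normalization $\psi$, the values lie in the small module $\rho\subset\rho_{p}$; this is how the inclusion $M(B_{f}^{\times},\rho)\subset M(B_{f}^{\times},\rho_{p})$ is to be read off, and the identification is $\mathcal{H}(\Sigma_{p}(B_{f}^{\times}))$-equivariant by the previous step. For the central character, if $\rho_{p}$ has central character $\omega_{\rho_{p}}$, then for $z\in Z_{f}$ the element $z_{p}$ is central in $B_{p}^{\times}$, so $\rho_{p}(z_{p}^{-1})$ acts as the scalar $\omega_{\rho_{p}}(z_{p})^{-1}$, whence $\psi_{\varphi}(zx)=\varphi(zx)(zx)_{p}^{-1}=i_{p\ast}(\omega_{0})(z)\,\omega_{\rho_{p}}(z_{p})^{-1}\,\psi_{\varphi}(x)$, which gives $\psi_{\varphi}$ the central character $\omega_{0,p}=i_{p\ast}(\omega_{0})\,\omega_{\rho_{p}}^{-1}((-)_{p})$ as claimed. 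I expect no genuine obstacle in any single step; the only delicate point is this last piece of $\rho$-versus-$\rho_{p}$ bookkeeping, and once the left/right conventions are fixed every assertion collapses to a one-line manipulation of the sort above.
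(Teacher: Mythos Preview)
Your proof is correct and follows essentially the same approach as the paper's: direct verification of the bijection and its equivariance properties. The only organizational difference is that the paper first establishes the $(B^{\times},\Sigma_{p}(B_{f}^{\times}))$-equivariant identification at the level of the ambient function spaces $S(B_{f}^{\times},\rho_{p})\simeq S_{p}(B_{f}^{\times},\rho_{p})$, invokes cofinality of $\mathcal{K}^{\diamond}$ in $\mathcal{K}$ to match the invariants, and leaves the remaining checks (central character, $\rho\subset\rho_{p}$) as ``one checks'', whereas you carry out all the verifications directly at the level of $M$ and $M_{p}$ with the invariants already taken; your writeup thus supplies the details the paper omits.
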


\begin{proof}
Indeed the above rules induce a $\left( B^{\times },\Sigma _{p}\left(
B_{f}^{\times }\right) \right) $-equivariant identification $S\left(
B_{f}^{\times },\rho _{p}\right) \simeq S_{p}\left( B_{f}^{\times },\rho
_{p}\right) $. Since $B_{f}=B_{f}^{\times ,p}\times B_{p}^{\times }$
topologically, $\mathcal{K}^{\diamond }\subset \mathcal{K}$ is a cofinal
family and we have $S^{\mathcal{K}}=S^{\mathcal{K}^{\diamond }}$ for every $%
B_{f}^{\times }$-module. Hence, taking $\left( B^{\times },\mathcal{K}%
\right) $-invariant on the left and $\left( B^{\times },\mathcal{K}%
^{\diamond }\right) $-invariants on the right yields the $\Sigma _{p}\left(
B_{f}^{\times }\right) $-equivariant identification. Then one checks that
the correspondence has the required properties.
\end{proof}

\bigskip

\begin{example}
The above lemma notably applies in the following setting: let $\mathbf{V}$
be an algebraic representation of $B^{\times }$ over $R=\mathbb{Q}\subset 
\mathbb{C},\mathbb{Q}_{p}$ (or a quadratic field $R=K$ which splits $B$)\
and set $\left( V,\rho \right) :=\mathbf{V}\left( \mathbb{Q}\right) $ and $%
\left( V_{p},\rho _{p}\right) :=\mathbf{V}\left( \mathbb{Q}_{p}\right) $. We
can also take $R$ large enough for the values of the characters $\omega
_{\rho _{p}}$ and $\omega _{0}$ to take values in it.
\end{example}

\subsection{\label{SS Norm forms}The norm forms}

Here is a key example of modular form. Consider the (normalized)\ absolute
value functions $\left\vert -\right\vert _{v}:\mathbb{Q}_{v}^{\times
}\rightarrow \mathbb{R}_{+}^{\times }$, $\left\vert -\right\vert _{\mathbb{A}%
_{f}}:\mathbb{A}_{f}^{\times }\rightarrow \mathbb{Q}_{+}^{\times }$ and $%
\left\vert -\right\vert _{\mathbb{A}}:\mathbb{A}^{\times }\rightarrow 
\mathbb{R}_{+}^{\times }$. Setting%
\begin{equation*}
\mathrm{N}:=\left\vert -\right\vert _{\mathbb{A}_{f}}^{-1}\left\vert
-\right\vert _{\infty }:\mathbb{A}^{\times }=\mathbf{G}_{m}\left( \mathbb{A}%
\right) \rightarrow \mathbb{C}^{\times }
\end{equation*}%
gives a function such that $\mathrm{N}_{f}\mathrm{N}_{\infty
}^{-1}=\left\vert -\right\vert _{\mathbb{A}}^{-1}$ is trivial on $\mathbb{Q}%
^{\times }=\mathbf{G}_{m}\left( \mathbb{Q}\right) $ by the product formula.
Suppose that $\chi :\mathbf{B}^{\times }\rightarrow \mathbf{G}_{m}$ is an
algebraic character and that $\tau :R^{\times }\rightarrow G$ is a
character. Then we define $\tau _{\chi }:\mathbf{B}^{\times }\left( R\right) 
\overset{\chi _{R}}{\rightarrow }R^{\times }\overset{\tau }{\rightarrow }G$.
In particular, we have the continuous character%
\begin{equation*}
\mathrm{N}_{\chi }:\mathbf{B}^{\times }\left( \mathbb{A}\right) \overset{%
\chi _{\mathbb{A}}}{\rightarrow }\mathbb{A}^{\times }\overset{\mathrm{N}}{%
\rightarrow }\mathbb{R}_{+}^{\times }
\end{equation*}%
and, recalling that $\mathrm{N}_{f}=\left\vert -\right\vert _{\mathbb{A}%
_{f}}^{-1}$ and $\mathrm{N}_{\infty }=\left\vert -\right\vert _{\infty }$,%
\begin{equation*}
\mathrm{N}_{\chi ,f}:\mathbf{B}^{\times }\left( \mathbb{A}_{f}\right) 
\overset{\chi _{\mathbb{A}_{f}}}{\rightarrow }\mathbb{A}_{f}^{\times }%
\overset{\left\vert -\right\vert _{\mathbb{A}_{f}}^{-1}}{\rightarrow }%
\mathbb{Q}_{+}^{\times }\text{ and }\mathrm{N}_{\chi ,\infty }:\mathbf{B}%
^{\times }\left( \mathbb{R}\right) \overset{\chi _{\infty }}{\rightarrow }%
\mathbb{R}^{\times }\overset{\left\vert -\right\vert _{\infty }}{\rightarrow 
}\mathbb{R}_{+}^{\times }\text{.}
\end{equation*}%
Of course $\mathrm{N}_{\chi ,f}$ (resp. $\mathrm{N}_{\chi ,\infty }$) is the
finite adele (resp. $\infty $) component of $\mathrm{N}_{\chi }$, as
suggested by the notation. If $\kappa :\mathbb{Q}_{+}^{\times }\rightarrow
R^{\times }$ is a character (that we usually write exponentially $r\mapsto
r^{\kappa }$), we can also define%
\begin{equation*}
\mathrm{N}_{\chi ,f}^{\kappa }:\mathbf{G}\left( \mathbb{A}_{f}\right) 
\overset{\mathrm{N}_{\chi ,f}}{\rightarrow }\mathbb{Q}_{+}^{\times }\overset{%
\kappa }{\rightarrow }R^{\times }
\end{equation*}%
Note that $\chi _{\infty }\left( \mathbf{B}^{\times }\left( \mathbb{R}%
\right) \right) =\chi _{\infty }\left( \mathbf{B}^{\times }\left( \mathbb{R}%
\right) ^{\circ }\right) \subset \mathbb{R}_{+}^{\times }$ (because $B$ is
definite), implying that $\chi _{\mathbb{Q}}\left( \mathbf{B}^{\times
}\left( \mathbb{Q}\right) \right) \subset \mathbb{Q}_{+}^{\times }$ and we
may consider $\kappa _{\chi }:=\kappa \circ \chi _{\mathbb{Q}}$. If $%
V=\left( V,\rho \right) $ is a representation of $\mathbf{G}\left( F_{\infty
}\right) $ with coefficients in $R$, we write $V\left( \kappa _{\chi
}\right) =\left( V,\rho \left( \kappa _{\chi }\right) \right) $ for the
representation $\rho \left( \kappa _{\chi }\right) \left( g\right) \left(
v\right) :=\kappa _{\chi }\left( g\right) \rho \left( g\right) v$.

\begin{remark}
\label{Borel-Weil R1}The continuous character $\mathrm{N}_{\chi }$ is such
that $\mathrm{N}_{\chi ,f}\mathrm{N}_{\chi ,\infty }^{-1}$ is trivial on $%
\mathbf{G}\left( F\right) $ and we have%
\begin{equation*}
\mathrm{N}_{\chi ,f}^{\kappa }\in M\left( \mathbf{G}\left( \mathbb{A}%
_{f}\right) ,R\left( \kappa _{\chi }\right) ,\mathrm{N}_{\chi ,f\mid
Z_{f}}^{\kappa }\right) ^{K}
\end{equation*}%
for every open and compact $K\in \mathcal{K}$.
\end{remark}

\begin{proof}
This is an application of the product formula and the fact that $\chi _{%
\mathbb{Q}}\left( \mathbf{B}^{\times }\left( \mathbb{Q}\right) \right)
\subset \mathbb{Q}_{+}^{\times }$.
\end{proof}

\bigskip

Taking%
\begin{equation*}
\chi =\mathrm{nrd}:\mathbf{B}^{\times }\rightarrow \mathbf{G}_{m}
\end{equation*}%
yields, for every $\kappa =k\in \mathbb{Z}$ (viewed as the character $k:%
\mathbb{Q}^{\times }\rightarrow R$ via $r\mapsto r^{k}$), the norm form%
\begin{equation*}
\mathrm{Nrd}_{f}^{k}:=\mathrm{N}_{\chi ,f}^{k}\in M\left( \mathbf{G}\left( 
\mathbb{A}_{f}\right) ,\mathbb{Q}\left( k\right) ,\mathrm{N}_{f\mid
Z_{f}}^{2k}\right) ^{K}\text{, for every }K\in \mathcal{K}\text{.}
\end{equation*}%
Applying Lemma \ref{L1} with $\rho =\mathbb{Q}\left( k\right) $, $\rho _{p}=%
\mathbb{Q}_{p}\left( k\right) $ and $\varphi =\mathrm{Nrd}_{f}^{k}\left(
-\right) _{p}\in M\left( \mathbf{G}\left( \mathbb{A}_{f}\right) ,\mathbb{Q}%
_{p}\left( k\right) ,2k\right) ^{K}$ yields the $p$-adic modular form%
\begin{equation*}
\mathrm{Nrd}_{p}^{k}:=\psi _{\varphi }\in M_{p}\left( \mathbf{G}\left( 
\mathbb{A}_{f}\right) ,\mathbb{Q}_{p}\left( k\right) ,\mathrm{N}_{p\mid
Z_{f}}^{2k}\right) ^{K}\text{, for every }K\in \mathcal{K}\text{.}
\end{equation*}%
We have, explicitely,%
\begin{equation*}
\mathrm{Nrd}_{p}^{k}\left( x\right) =\mathrm{Nrd}_{f}^{k}\left( x\right)
_{p}x_{p}^{-1}=\left( \frac{\mathrm{Nrd}_{f}\left( x\right) _{p}}{\mathrm{nrd%
}_{p}\left( x_{p}\right) }\right) ^{k}\text{.}
\end{equation*}

We now remark that $\frac{\mathrm{Nrd}_{f}\left( x\right) _{p}}{\mathrm{nrd}%
_{p}\left( x_{p}\right) }\in \mathbb{Z}_{p}^{\times }$ for any $x\in
B_{f}^{\times }$. Suppose now that we have given $\mathbf{k}:\mathbb{Z}%
_{p}^{\times }\rightarrow \mathcal{O}^{\times }$ which is a continuous group
homomorphism, with $\mathcal{O}$ a locally convex $\mathbb{Q}_{p}$-algebra.
Since $K_{p}^{\diamond }\subset B_{p}^{\times }$ is a compact subgroup, $%
\mathrm{nrd}_{p}$ maps it into the maximal open compact subgroup $\mathbb{Z}%
_{p}^{\times }\subset \mathbb{Q}_{p}^{\times }$:%
\begin{equation*}
\mathrm{nrd}_{p}:K_{p}^{\diamond }\rightarrow \mathbb{Z}_{p}^{\times }\text{.%
}
\end{equation*}%
If $D$ is a $K_{p}^{\diamond }$-module with coefficients in $\mathcal{O}$,
it makes sense to consider $D\left( \mathbf{k}\right) :=D\left( \mathrm{nrd}%
_{p}^{\mathbf{k}}\right) $, the same representation with action $v\cdot _{%
\mathbf{k}}g:=\mathrm{nrd}_{p}^{\mathbf{k}}\left( g\right) vg$. With this
notation, we have%
\begin{equation*}
\mathrm{Nrd}_{p}^{\mathbf{k}}\in M_{p}\left( \mathbf{G}\left( \mathbb{A}%
_{f}\right) ,\mathcal{O}\left( \mathbf{k}\right) ,\mathrm{N}_{p\mid Z_{f}}^{2%
\mathbf{k}}\right) ^{K}\text{, for every }K\in \mathcal{K}^{\diamond }
\end{equation*}%
which \emph{interpolates} the norm forms $\mathrm{Nrd}_{f}^{k}\simeq \mathrm{%
Nrd}_{p}^{k}$ with $k\in \mathbb{Z}$.

\subsection{multilinear forms}

For $x\in B_{f}^{\times }$ and $K\in \mathcal{K}$, define $\Gamma
_{K}(x)=B^{\times }\cap x^{-1}Kx$. Being discrete (as $B^{\times }$ is) and
compact (as $K$ is), the set $\Gamma _{K}(x)$ is finite. For each $K\in 
\mathcal{K}$ and each set $R_{K}\subset G_{f}$ of representatives of $%
K\backslash B_{f}/B^{\times }$, define%
\begin{equation*}
T_{R_{K}}:M\left( B_{f}^{\times },R\right) ^{K}\longrightarrow R\quad \text{%
by}\quad T_{R_{K}}\left( f\right) :=\mu \left( K\right) \tsum\limits_{x\in
R_{K}}\frac{f\left( x\right) }{\left\vert \Gamma _{K}\left( x\right)
\right\vert }.
\end{equation*}%
It is easy to see that this is a well defined quantity which is independent
from the choice of $K$ (see \cite[\S 3.3.1]{GS2} for details), implying that
this family defines%
\begin{equation*}
T_{B_{f}/B^{\times }}:M\left( B_{f}^{\times },R\right) \rightarrow R\text{
and }T_{Z_{f}\backslash B_{f}/B^{\times }}:M\left( Z_{f}\backslash
B_{f}^{\times },R\right) \rightarrow R
\end{equation*}%
where $T_{B_{f}/B^{\times }}=T_{K}$ on $M\left( G_{f},R\right) ^{K}$ and $%
T_{Z_{f}\backslash B_{f}/B^{\times }}:=T_{B_{f}/B^{\times }\mid M\left(
Z_{f}\backslash B_{f}^{\times },R\right) }$.

\bigskip

Suppose that we have given a right representation $\left( V,\rho \right) $\
of $G_{\infty }$ (resp. $\Sigma _{p}$)\ with coefficients in some
commutative unitary ring $R$ and group homomorphisms $k:\mathbb{Q}^{\times
}\rightarrow R^{\times }$ (resp. $\mathbf{k}:\mathbb{Z}_{p}^{\times
}\rightarrow R^{\times }$). If%
\begin{equation*}
\Lambda \in Hom_{R\left[ B_{\infty }^{\times }\right] }\left( \rho ,R\left(
k\right) \right) \text{ (resp. }\in Hom_{R\left[ K_{p}^{\diamond }\right]
}\left( \rho ,R\left( \mathbf{k}\right) \right) \text{),}
\end{equation*}%
Then we may define the $R$-linear morphisms%
\begin{equation*}
M\left( \Lambda \right) :M\left( B_{f}^{\times },\rho ,\mathrm{N}_{f\mid
Z_{f}}^{2k}\right) \rightarrow R\text{ (resp. }M_{p}\left( \Lambda \right)
:M_{p}\left( B_{f}^{\times },\rho ,\mathrm{N}_{p\mid Z_{f}}^{2k}\right)
\rightarrow R\text{)}
\end{equation*}%
by the rule%
\begin{eqnarray*}
&&\text{ }M\left( \Lambda \right) \left( \varphi \right) :=\mu \left(
K\right) \tsum\nolimits_{x\in K\backslash B^{\times }/B^{\times }}\frac{%
\Lambda \left( \varphi \left( x\right) \right) }{\left\vert \Gamma
_{K}\left( x\right) \right\vert \mathrm{Nrd}_{f}^{k}\left( x\right) }\text{
if }\varphi \in M\left( B_{f}^{\times },\rho ,\mathrm{N}_{f\mid
Z_{f}}^{2k}\right) ^{K} \\
&&\text{ (resp. }M\left( \Lambda \right) \left( \varphi \right) :=\mu \left(
K\right) \tsum\nolimits_{x\in K\backslash B^{\times }/B^{\times }}\frac{%
\Lambda \left( \varphi \left( x\right) \right) }{\left\vert \Gamma
_{K}\left( x\right) \right\vert \mathrm{Nrd}_{p}^{\mathbf{k}}\left( x\right) 
}\text{ if }\varphi \in M_{p}\left( B_{f}^{\times },\rho ,\mathrm{N}_{p\mid
Z_{f}}^{2\mathbf{k}}\right) ^{K}\text{)}
\end{eqnarray*}%
Alternatively, we have%
\begin{eqnarray*}
&&\text{ }M\left( \Lambda \right) :M\left( B_{f}^{\times },\rho ,\mathrm{N}%
_{f\mid Z_{f}}^{2k}\right) \overset{\Lambda _{\ast }}{\rightarrow }M\left(
B_{f}^{\times },R\left( k\right) ,\mathrm{N}_{f\mid Z_{f}}^{2k}\right) 
\overset{\left\langle \cdot ,\mathrm{Nrd}_{f}^{-k}\right\rangle }{%
\rightarrow }R \\
&&\text{ (resp. }M_{p}\left( \Lambda \right) :M_{p}\left( B_{f}^{\times
},\rho ,\mathrm{N}_{p\mid Z_{f}}^{2\mathbf{k}}\right) \overset{\Lambda
_{\ast }}{\rightarrow }M_{p}\left( B_{f}^{\times },R\left( k\right) ,\mathrm{%
N}_{p\mid Z_{f}}^{2\mathbf{k}}\right) \overset{\left\langle \cdot ,\mathrm{%
Nrd}_{p}^{-\mathbf{k}}\right\rangle }{\rightarrow }R\text{),}
\end{eqnarray*}%
where:

\begin{itemize}
\item $\Lambda _{\ast }$ is the morphism induced by functoriality and $%
\Lambda $, i.e. $\Lambda _{\ast }\left( \varphi \right) \left( x\right)
:=\Lambda \left( \varphi \left( x\right) \right) $;

\item $\left\langle \cdot ,\cdot \right\rangle $ is the natural pairing%
\begin{eqnarray*}
&&\text{ }\left\langle \cdot ,\cdot \right\rangle :M\left( B_{f}^{\times
},R\left( k\right) ,\mathrm{N}_{f\mid Z_{f}}^{2k}\right) \otimes _{R}M\left(
B_{f}^{\times },R\left( -k\right) ,\mathrm{N}_{f\mid Z_{f}}^{-2k}\right) 
\overset{\otimes }{\rightarrow }M\left( Z_{f}\backslash B_{f}^{\times
},R\right) \overset{T_{Z_{f}\backslash B_{f}/B^{\times }}}{\rightarrow }R \\
&&\text{ (resp. }\left\langle \cdot ,\cdot \right\rangle :M\left(
B_{f}^{\times },R\left( k\right) ,\mathrm{N}_{f\mid Z_{f}}^{2k}\right)
\otimes _{R}M\left( B_{f}^{\times },R\left( -k\right) ,\mathrm{N}_{f\mid
Z_{f}}^{-2k}\right) \overset{\otimes }{\rightarrow }M\left( Z_{f}\backslash
B_{f}^{\times },R\right) \overset{T_{Z_{f}\backslash B_{f}/B^{\times }}}{%
\rightarrow }R\text{),}
\end{eqnarray*}%
with $\left( \varphi _{1}\otimes \varphi _{2}\right) \left( x\right)
:=\varphi _{1}\left( x\right) \varphi _{2}\left( x\right) $.
\end{itemize}

It follows from this description that the quantity is well defined. Finally,
when $\rho =\rho _{1}\otimes _{R}...\otimes _{R}\rho _{n}$ and $\omega
_{0,i} $ (resp. $\omega _{0,p,i}$) are such that%
\begin{equation}
\omega _{0,1}...\omega _{0,n}=\mathrm{N}_{f\mid Z_{f}}^{2k}\text{ (resp. }%
\omega _{0,p,1}...\omega _{0,p,n}=\mathrm{N}_{p\mid Z_{f}}^{2k}\text{),}
\label{F multilinear}
\end{equation}%
we can define the $R$-linear morphism%
\begin{eqnarray}
&&\text{ }J\left( \Lambda \right) :M\left( B_{f}^{\times },\rho _{1},\omega
_{0,1}\right) \otimes _{R}...\otimes _{R}M\left( B_{f}^{\times },\rho
_{n},\omega _{0,n}\right) \overset{\otimes }{\rightarrow }M\left(
B_{f}^{\times },\rho ,\mathrm{N}_{f\mid Z_{f}}^{2k}\right) \overset{M\left(
\Lambda \right) }{\rightarrow }R  \notag \\
&&\text{ (resp. }J_{p}\left( \Lambda \right) :M\left( B_{f}^{\times },\rho
_{1},\omega _{0,1}\right) \otimes _{R}...\otimes _{R}M\left( B_{f}^{\times
},\rho _{n},\omega _{0,n}\right) \overset{\otimes }{\rightarrow }M\left(
B_{f}^{\times },\rho ,\mathrm{N}_{f\mid Z_{f}}^{2k}\right) \overset{M\left(
\Lambda \right) }{\rightarrow }R\text{)}  \label{p-adic F Prof n-Form2}
\end{eqnarray}%
where $\otimes $ is obatined by iteration of $\left( \varphi _{1}\otimes
\varphi _{2}\right) \left( x\right) :=\varphi _{1}\left( x\right) \otimes
_{R}\varphi _{2}\left( x\right) $ in case $n=2$.

\bigskip

Let us now assume that we are in the setting of Lemma \ref{L1}, with
representations $\rho _{i,p}$ (resp. $\rho _{i,\infty }$)\ of $B_{p}^{\times
}$ (resp. $B_{\infty }^{\times }$) with coefficients in $R_{p}$ (resp. $%
R_{\infty }$) with the property that $\rho _{i}:=\rho _{i,p\mid B^{\times
}}=\rho _{i,\infty \mid B^{\times }}\subset \rho _{i,p},\rho _{i,\infty }$
with coefficients in $R$ and $\left( \text{\ref{F multilinear}}\right) $
satisfied. Furthermore, suppose that $\left( \Lambda _{p},\Lambda _{\infty
}\right) $ is a couple of elements $\Lambda _{p}\in Hom_{R_{p}\left[
B_{p}^{\times }\right] }\left( \rho _{p},R_{p}\left( k\right) \right) $ and $%
\Lambda _{\infty }\in Hom_{R_{\infty }\left[ B_{\infty }^{\times }\right]
}\left( \rho _{\infty },R_{\infty }\left( k\right) \right) $ with the
property that 
\begin{equation*}
\Lambda :=\Lambda _{p\mid \rho }=\Lambda _{\infty \mid \rho }\in Hom_{R\left[
B^{\times }\right] }\left( \rho ,R\left( k\right) \right)
\end{equation*}%
(Here we assume that $k:\mathbb{Q}^{\times }\rightarrow R$ and identify it
with $i_{p\ast }\left( \chi \right) :=i_{p}\circ \chi $ and $i_{\infty \ast
}\left( \chi \right) :=i_{\infty }\circ \chi $).

\begin{proposition}
\label{P1}Via the inclusions/identifications provided by Lemma \ref{L1}, we
have%
\begin{equation*}
J_{p}\left( \Lambda _{p}\right) _{\mid \otimes _{i=1}^{n}M\left(
B_{f}^{\times },\rho _{i},\omega _{0,i}\right) }=J\left( \Lambda \right)
_{\mid \otimes _{i=1}^{n}M\left( B_{f}^{\times },\rho _{i},\omega
_{0,i}\right) }
\end{equation*}%
on%
\begin{equation*}
\otimes _{i=1}^{n}M\left( B_{f}^{\times },\rho _{i},\omega _{0,i}\right)
\subset \otimes _{i=1}^{n}M_{p}\left( B_{f}^{\times },\rho _{i,p},\omega
_{0,p,i}\right) ,\otimes _{i=1}^{n}M\left( B_{f}^{\times },\rho _{i,\infty
},i_{\infty \ast }\left( \omega _{0,i}\right) \right) \text{.}
\end{equation*}
\end{proposition}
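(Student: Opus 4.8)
The plan is to deduce the identity from the three basic compatibilities of the identification $\varphi\mapsto\psi_\varphi$, $\psi_\varphi(x)=\varphi(x)x_p^{-1}$, of Lemma \ref{L1}: with the tensor product $\otimes$ of modular forms, with the functorial pushforward $\Lambda_\ast$, and with the trace pairing against the norm form. Recalling the ``Alternatively'' descriptions, we may factor
\begin{equation*}
J(\Lambda)=\langle\,\cdot\,,\mathrm{Nrd}_f^{-k}\rangle\circ\Lambda_\ast\circ\otimes,\qquad J_p(\Lambda_p)=\langle\,\cdot\,,\mathrm{Nrd}_p^{-k}\rangle\circ\Lambda_{p\ast}\circ\otimes ,
\end{equation*}
so it is enough to check that each of the three arrows is intertwined by the Lemma \ref{L1} identifications attached to the relevant coefficient systems $\rho_i,\rho,R(k),R(-k),R$. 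The archimedean data $(\rho_{i,\infty},\Lambda_\infty)$ plays no active role here; it is carried along because the same package feeds Theorem \ref{T Main}, and it only serves to guarantee that $\Lambda:=\Lambda_{p\mid\rho}$ is canonically attached to $\mathbf{f}_{\underline{k}}^{\#}$.

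First I would dispose of the tensor product: the rule $\psi_\varphi(x)=\varphi(x)x_p^{-1}$ is multiplicative in the coefficient system, since $x_p^{-1}$ acts diagonally on $\rho_p=\rho_{1,p}\otimes_{R_p}\cdots\otimes_{R_p}\rho_{n,p}$, so $\psi_{\varphi_1}\otimes\cdots\otimes\psi_{\varphi_n}=\psi_{\varphi_1\otimes\cdots\otimes\varphi_n}$, and under $\rho_i\subset\rho_{i,p}$ this is the image of the classical $\rho$-valued form $\varphi_1\otimes\cdots\otimes\varphi_n$. For the pushforward, if $\varphi$ is valued in $\rho\subset\rho_p$ then, using that $\Lambda_p$ is $B_p^\times$-equivariant into $R_p(k)$ and that $\Lambda_{p\mid\rho}=\Lambda$,
\begin{equation*}
\Lambda_{p\ast}(\psi_\varphi)(x)=\Lambda_p\bigl(\varphi(x)x_p^{-1}\bigr)=\mathrm{nrd}_p(x_p)^{-k}\,\Lambda(\varphi(x))=\psi_{\Lambda_\ast(\varphi)}(x),
\end{equation*}
where the scalar $\mathrm{nrd}_p(x_p)^{-k}$ is the action of $x_p^{-1}$ on $R(k)$ --- the same scalar that appears in $\mathrm{Nrd}_p^k(x)=\mathrm{nrd}_p(x_p)^{-k}\mathrm{Nrd}_f^k(x)_p$ of \S\ref{SS Norm forms}.

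It remains to handle the pairing. If $\varphi_1$ is valued in $R(k)$ and $\varphi_2$ in $R(-k)$ then $\psi_{\varphi_1}(x)\psi_{\varphi_2}(x)=\mathrm{nrd}_p(x_p)^{-k}\varphi_1(x)\cdot\mathrm{nrd}_p(x_p)^{k}\varphi_2(x)=\varphi_1(x)\varphi_2(x)$, so the pointwise product lands in $M(Z_f\backslash B_f^\times,R)$ unchanged, and on $R$-valued forms the identification of Lemma \ref{L1} is the identity (trivial $B_p^\times$-action), whence $T_{Z_f\backslash B_f/B^\times}$ is respected verbatim; taking $\varphi_2=\mathrm{Nrd}_f^{-k}$ and using that Lemma \ref{L1} carries it to $\mathrm{Nrd}_p^{-k}$ (this is exactly the construction of \S\ref{SS Norm forms}) finishes this step. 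Chaining the three, one obtains $J_p(\Lambda_p)=J(\Lambda)$ on $\otimes_{i=1}^n M(B_f^\times,\rho_i,\omega_{0,i})$; concretely, term by term along $x\in K\backslash B_f^\times/B^\times$ one checks $\Lambda_p(\psi_\varphi(x))/\mathrm{Nrd}_p^k(x)=\Lambda(\varphi(x))/\mathrm{Nrd}_f^k(x)$ in $R\subset R_p$, and the prefactors $\mu(K)/|\Gamma_K(x)|$ match.

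The only genuinely delicate point --- and the one place where the hypothesis linking $\omega_{0,i}$ and $\omega_{0,p,i}$ is consumed --- is verifying that all the modular-form spaces occurring in the two factorisations actually line up: one must check that the central-character condition \eqref{F multilinear} is transported correctly by $i_{p\ast}$ together with the twist $\omega_{\rho_p}^{-1}((-)_p)$ produced by Lemma \ref{L1}, so that $\prod_i\omega_{0,p,i}=\mathrm{N}_{p\mid Z_f}^{2k}$ is forced by $\prod_i\omega_{0,i}=\mathrm{N}_{f\mid Z_f}^{2k}$. This is routine bookkeeping: apply Lemma \ref{L1} to each $\rho_i$ separately, multiply the central characters, and use that $\rho_p=\otimes_i\rho_{i,p}$ has central character $\prod_i\omega_{\rho_{i,p}}$; no new idea is required beyond care with the twists.
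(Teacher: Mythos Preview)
Your proof is correct and follows essentially the same approach as the paper: factor $J(\Lambda)$ and $J_p(\Lambda_p)$ into the chain $\otimes$, $\Lambda_\ast$, $\langle\,\cdot\,,\mathrm{Nrd}^{-k}\rangle$, and check that each arrow is intertwined by the Lemma~\ref{L1} identification, the key non-canonical step being that $\mathrm{Nrd}_f^{-k}$ corresponds to $\mathrm{Nrd}_p^{-k}$. The paper compresses this to a one-line ``it is easily checked that all the canonical morphisms match; the non-canonical ones match because $\mathrm{Nrd}_f^{-k}$ corresponds to $\mathrm{Nrd}_p^{-k}$ via Lemma~\ref{L1}'', whereas you spell out each compatibility and the central-character bookkeeping explicitly.
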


\begin{proof}
It is easily checked that all the canonical morphisms involved in the
definition of $J_{p}\left( \Lambda _{p}\right) $ and $J\left( \Lambda
_{\infty }\right) $ match: the non canonical ones, namely $\left\langle
\cdot ,\mathrm{Nrd}_{f}^{-k}\right\rangle $ and $\left\langle \cdot ,\mathrm{%
Nrd}_{p}^{-k}\right\rangle $ match because $\mathrm{Nrd}_{f}^{-k}$
corresponds to $\mathrm{Nrd}_{p}^{-k}$ via Lemma \ref{L1}.
\end{proof}

\subsection{\label{SSS Adjointness}Pairings and adjointness}

Suppose that $D$ (resp. $E$) is a $\Sigma _{D}$ (resp. $\Sigma _{E}$)
module, where $\Sigma _{D}$ (resp. $\Sigma _{E}$) satisfies the assumption
that was done on $\Sigma _{p}$, and we let $\omega _{0,p,D},\omega
_{0,p,E}:Z_{f}\rightarrow R^{\times }$ be characters such that $\omega
_{0,p,D}\omega _{0,p,E}=\omega _{0,p}$. We assume that we have given a group
homomorphism $\mathbf{k}:\mathbb{Z}_{p}^{\times }\rightarrow R^{\times }$\
and a pairing%
\begin{equation*}
\left\langle -,-\right\rangle \in Hom_{R\left[ K_{p}^{\diamond }\right]
}\left( D\otimes _{R}E,R\left( \mathbf{k}\right) \right) \text{.}
\end{equation*}%
Then $\left( \text{\ref{p-adic F Prof n-Form2}}\right) $ gives%
\begin{equation*}
\left\langle -,-\right\rangle _{M_{p}}:M_{p}\left( G_{f},D,\omega
_{0,p,D}\right) \otimes _{R}M_{p}\left( G_{f},E,\omega _{0,p,E}\right)
\rightarrow R\text{.}
\end{equation*}

We suppose $\Sigma _{D}=\Sigma _{p}$, $\Sigma _{D}=\Sigma _{p}^{\iota }$ and 
$\left( K_{p}^{\diamond }\right) ^{\iota }=K_{p}^{\diamond }\subset \Sigma
_{p}\cap \Sigma _{p}^{\iota }$(as in case $K_{p}^{\diamond }=\Gamma
_{0}\left( p\mathbb{Z}_{p}\right) $ and $\Sigma _{p}=\Sigma _{0}\left( p%
\mathbb{Z}_{p}\right) $) and that $\mathbf{Z}=\mathbf{Z}_{\mathbf{B}^{\times
}}=\mathbf{G}_{m}$. Assuming that $E$ has central character $\kappa _{E}:%
\mathbb{Z}_{p}^{\times }\rightarrow R^{\times }$, we can consider the second
of the following compositions:%
\begin{equation*}
\mathrm{nrd}_{f}^{\omega _{0,p,E}}:B_{f}^{\times }\overset{\mathrm{nrd}_{f}}{%
\rightarrow }\mathbb{A}_{f}^{\times }=Z_{f}\overset{\omega _{0,p,E}}{%
\rightarrow }R^{\times }\text{ and }\mathrm{nrd}_{p}^{\kappa
_{E}}:K_{p}^{\diamond }\overset{\mathrm{nrd}_{p}}{\rightarrow }\mathbb{Z}%
_{p}^{\times }\overset{\kappa _{E}}{\rightarrow }R^{\times }
\end{equation*}

Suppose that $\mathbf{k},\kappa _{E}:\mathbb{Z}_{p}^{\times }\rightarrow
R^{\times }$ extends to a character $\widetilde{\mathbf{k}},\widetilde{%
\kappa _{E}}:\mathbb{Q}_{p}^{\times }\rightarrow R^{\times }$. Then%
\begin{equation*}
\mathrm{nrd}_{p}^{\widetilde{\kappa _{E}}}:K_{p}^{\diamond }\overset{\mathrm{%
nrd}_{p}}{\rightarrow }\mathbb{Q}_{p}^{\times }\overset{\widetilde{\kappa
_{E}}}{\rightarrow }R^{\times }
\end{equation*}%
is an extension of $\mathrm{nrd}_{p}^{\kappa _{E}}$ to $\Sigma _{p}$ and we
let $Hom_{R\left[ \Sigma _{p},\Sigma _{p}^{\iota }\right] }\left( D\otimes
E,R\left( \widetilde{\mathbf{k}}\right) \right) $ be the set of those
pairings such that%
\begin{equation*}
\left\langle v\sigma ,w\right\rangle =\mathrm{nrd}_{p}^{\widetilde{\mathbf{k}%
}-\widetilde{\kappa _{E}}}\left( \sigma \right) \left\langle v,w\sigma
^{\iota }\right\rangle \text{ for every }\sigma \in \Sigma _{p}\text{.}
\end{equation*}%
We remark that, for every element $u\in K_{p}^{\diamond }$,%
\begin{equation*}
\left\langle vu,wu\right\rangle =\mathrm{nrd}_{p}^{\mathbf{k}-\kappa
_{E}}\left( u\right) \left\langle v,wuu^{\iota }\right\rangle =\mathrm{nrd}%
_{p}^{\mathbf{k}}\left( u\right) \left\langle v,w\right\rangle \text{,}
\end{equation*}%
so that $Hom_{R\left[ \Sigma _{p},\Sigma _{p}^{\iota }\right] }\left(
D\otimes E,R\left( \widetilde{\mathbf{k}}\right) \right) \subset Hom_{R\left[
K_{p}^{\diamond }\right] }\left( D\otimes E,R\left( \mathbf{k}\right)
\right) $.

\begin{remark}
\label{R1}Suppose now that $D\subset \widetilde{D}$ and $E\subset \widetilde{%
E}$, where $\widetilde{D}$ and $\widetilde{E}$ are $B_{p}^{\times }$%
-modules, the above inclusions are $\Sigma _{p}$ and, respectively, $\Sigma
_{p}^{\iota }$-equivariant and that $\widetilde{E}$ has central character $%
\kappa _{\widetilde{E}}=\widetilde{\kappa _{E}}$ extending $\kappa _{E}$. If 
$\left\langle \cdot ,\cdot \right\rangle \in Hom_{\mathcal{O}\left[
K_{p}^{\diamond }\right] }\left( D\otimes E,R\left( \mathbf{k}\right)
\right) $ extends to $\left\langle \cdot ,\cdot \right\rangle ^{\sim }\in
Hom_{\mathcal{O}\left[ B_{p}^{\times }\right] }\left( \widetilde{D}\otimes 
\widetilde{E},R\left( \widetilde{\mathbf{k}}\right) \right) $ then $%
\left\langle \cdot ,\cdot \right\rangle \in Hom_{\mathcal{O}\left[ \Sigma
_{p},\Sigma _{p}^{\iota }\right] }\left( D\otimes E,R\left( \widetilde{%
\mathbf{k}}\right) \right) $:%
\begin{equation*}
\left\langle v\sigma ,w\right\rangle =\left\langle v\sigma ,w\sigma
^{-1}\sigma \right\rangle ^{\sim }=\mathrm{nrd}_{p}^{\widetilde{\mathbf{k}}%
}\left( \sigma \right) \left\langle v,w\sigma ^{-1}\right\rangle ^{\sim }=%
\mathrm{nrd}_{p}^{\widetilde{\mathbf{k}}-\widetilde{\kappa _{E}}}\left(
\sigma \right) \left\langle v,w\sigma ^{\iota }\right\rangle \text{.}
\end{equation*}
\end{remark}

\bigskip

In the following proposition, we suppose that we are placed in the setting
pictured above and that $\pi $ is contentrated in $\pi _{p}\in \Sigma _{p}$.
Also, we suppose that $f\in M_{p}\left( G_{f},D,\omega _{0,p,D}\right)
^{K_{p}^{\diamond }}$ and $g\in M_{p}\left( G_{f},E,\omega _{0,p,E}\right)
^{K_{p}^{\diamond }}$ (and make a similar assumption for classical modular
forms $M$s to which the statement make reference). Finally, we assume that%
\begin{equation*}
\left\langle -,-\right\rangle \in Hom_{R\left[ \Sigma _{p},\Sigma
_{p}^{\iota }\right] }\left( D\otimes E,R\left( \widetilde{\mathbf{k}}%
\right) \right)
\end{equation*}%
(but for classical modular forms, we allow $\left\langle -,-\right\rangle
\in Hom_{R\left[ B_{p}^{\times }\right] }\left( D\otimes _{R}E,R\left( 
\mathbf{k}\right) \right) $ for $\mathbf{k}=$\ and does not require $E$ to
have central character $\kappa _{E}:\mathbb{Q}^{\times }\rightarrow
R^{\times }$)

\begin{proposition}
\label{P:heckeadjointprop}We have the following formulas, in the $p$-adic
case:%
\begin{equation*}
\left\langle f\mid T_{\pi },g\right\rangle =\mathrm{Nrd}_{f}^{\widetilde{%
\mathbf{k}}}\left( \pi \right) _{p}\mathrm{nrd}_{p}^{-\widetilde{\kappa _{E}}%
}\left( \pi _{p}\right) \mathrm{nrd}_{f}^{-\omega _{0,p,E}}\left( \pi
\right) \left\langle f,g\mid T_{\pi ^{\iota }}\right\rangle \text{.}
\end{equation*}%
For classical modular foems, $\left\langle f\mid T_{\pi },g\right\rangle =%
\mathrm{Nrd}_{f}^{\widetilde{\mathbf{k}}}\left( \pi \right) \left\langle
f,g\mid T_{\pi ^{-1}}\right\rangle $ and, whenever $E$ has central character 
$\kappa _{E}$, $\left\langle f\mid T_{\pi },g\right\rangle =\mathrm{Nrd}%
_{f}^{\widetilde{\mathbf{k}}}\left( \pi \right) \mathrm{nrd}_{f}^{-\kappa
_{E}}\left( \pi \right) \left\langle f,g\mid T_{\pi ^{\iota }}\right\rangle $%
.
\end{proposition}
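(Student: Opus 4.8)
The plan is to unwind every definition and reduce the identity to three facts established earlier: (i) the trace functional $T_{Z_{f}\backslash B_{f}/B^{\times }}$ is invariant under left translation by $B_{f}^{\times }$; (ii) the coefficient pairing satisfies $\left\langle v\sigma ,w\right\rangle =\mathrm{nrd}_{p}^{\widetilde{\mathbf{k}}-\widetilde{\kappa _{E}}}\left( \sigma \right) \left\langle v,w\sigma ^{\iota }\right\rangle $ for $\sigma \in \Sigma _{p}$; and (iii) $\mathrm{Nrd}_{p}^{\pm \mathbf{k}}$ are multiplicative characters of $B_{f}^{\times }$, given by $\mathrm{Nrd}_{p}^{\mathbf{k}}\left( x\right) =\mathbf{k}\!\left( \mathrm{Nrd}_{f}\left( x\right) _{p}\,\mathrm{nrd}_{p}\left( x_{p}\right) ^{-1}\right) $.

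First I record the two ingredients of the left-hand side. Since $\pi $ is concentrated at $p$, $\left( \text{\ref{F Hecke Operator}}\right) $ gives $f\mid T_{\pi }=\sum_{u}f\cdot \left( \pi u\right) $, the sum running over representatives $u$ of $\left( K_{p}^{\diamond }\cap \pi _{p}^{-1}K_{p}^{\diamond }\pi _{p}\right) \backslash K_{p}^{\diamond }$ (the prime-to-$p$ part of $K\cap \pi ^{-1}K\pi $ being all of $K^{p}$), hence $\left( f\mid T_{\pi }\right) \left( x\right) =\sum_{u}f\left( \pi ux\right) \rho _{D}\left( \pi _{p}u_{p}\right) $. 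By $\left( \text{\ref{p-adic F Prof n-Form2}}\right) $ with $n=2$, $\rho =D\otimes _{R}E$ and $\Lambda =\left\langle -,-\right\rangle $, the pairing of $p$-adic modular forms is the composite
\[
\left\langle f,g\right\rangle _{M_{p}}=T_{Z_{f}\backslash B_{f}/B^{\times }}\!\left( x\mapsto \left\langle f\left( x\right) ,g\left( x\right) \right\rangle \mathrm{Nrd}_{p}^{-\mathbf{k}}\left( x\right) \right) ,
\]
where $T_{Z_{f}\backslash B_{f}/B^{\times }}$ is computed by a $K$-independent finite sum over $K\backslash B_{f}^{\times }/B^{\times }$ for any sufficiently small $K\in \mathcal{K}^{\diamond \diamond }$ fixing both $f\mid T_{\pi }$ and $g$; in particular it behaves like the left-invariant integral $\int_{Z_{f}\backslash B_{f}^{\times }/B^{\times }}\left( -\right) \,d\mu $.

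Now I substitute $f\mid T_{\pi }$ into the pairing, pull the sum over $u$ outside $T$, and apply fact (ii) with $\sigma =\pi _{p}u_{p}\in \Sigma _{p}$ to transfer the $\Sigma _{p}$-action from $f$ to $g$, at the cost of $\mathrm{nrd}_{p}^{\widetilde{\mathbf{k}}-\widetilde{\kappa _{E}}}\left( \pi _{p}u_{p}\right) $. The change of variables $x\mapsto \left( \pi u\right) ^{-1}x$ inside $T$, via facts (i) and (iii), replaces $f\left( \pi ux\right) $ by $f\left( x\right) $ and $\mathrm{Nrd}_{p}^{-\mathbf{k}}\left( x\right) $ by $\mathrm{Nrd}_{p}^{\mathbf{k}}\left( \pi u\right) \mathrm{Nrd}_{p}^{-\mathbf{k}}\left( x\right) $, leaving the term $g\left( \left( \pi u\right) ^{-1}x\right) $. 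Using right $B^{\times }$- and left $K^{p}$-invariance of $g$ to discard the prime-to-$p$ part of $u^{-1}$, the relation $\left( \pi _{p}u_{p}\right) ^{-1}=\mathrm{nrd}_{p}\left( \pi _{p}u_{p}\right) ^{-1}u_{p}^{\iota }\pi _{p}^{\iota }$, and the central character of $g$ extended to $\widetilde{\kappa _{E}}$ on $\mathbb{Q}_{p}^{\times }$ (as in Remark \ref{R1}, the source of the factors $\mathrm{nrd}_{p}^{-\widetilde{\kappa _{E}}}\left( \pi _{p}\right) $ and $\mathrm{nrd}_{f}^{-\omega _{0,p,E}}\left( \pi \right) $), this becomes a scalar times $g\left( u_{p}^{\iota }\pi _{p}^{\iota }x\right) $. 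Finally $\iota $ carries $K_{p}^{\diamond }\pi _{p}K_{p}^{\diamond }=\bigsqcup_{u}K_{p}^{\diamond }\pi _{p}u_{p}$ to $K_{p}^{\diamond }\pi _{p}^{\iota }K_{p}^{\diamond }=\bigsqcup_{u}u_{p}^{\iota }\pi _{p}^{\iota }K_{p}^{\diamond }$, and after the standard passage between right and left coset decompositions of $K_{p}^{\diamond }\pi _{p}^{\iota }K_{p}^{\diamond }$ (legitimate because $\left[ K:K\cap \pi ^{-1}K\pi \right] =\left[ K:K\cap \pi ^{\iota }K\left( \pi ^{\iota }\right) ^{-1}\right] $) the residual sum over $u$ is exactly $\left( g\mid T_{\pi ^{\iota }}\right) \left( x\right) $. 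Collecting the scalars then yields the asserted formula: $\mathrm{Nrd}_{p}^{\mathbf{k}}\left( \pi \right) $ reconciled with $\mathrm{Nrd}_{f}$ via fact (iii) accounts for $\mathrm{Nrd}_{f}^{\widetilde{\mathbf{k}}}\left( \pi \right) _{p}$, the central-character extension contributes $\mathrm{nrd}_{p}^{-\widetilde{\kappa _{E}}}\left( \pi _{p}\right) $, and the residual finite central character contributes $\mathrm{nrd}_{f}^{-\omega _{0,p,E}}\left( \pi \right) $. The classical case is identical but simpler: one works directly with the genuine $B_{p}^{\times }$- (resp. $B_{\infty }^{\times }$-) representations, $\pi $ is already a unit so nothing needs extending and no $\mathrm{nrd}_{p}^{-\widetilde{\kappa _{E}}}\left( \pi _{p}\right) $ appears, the change of variables yields $g\mid T_{\pi ^{-1}}$ directly, and when $E$ has central character $\kappa _{E}$ one trades $\pi ^{-1}$ for $\pi ^{\iota }=\mathrm{nrd}\left( \pi \right) \pi ^{-1}$ at the cost of $\mathrm{nrd}_{f}^{-\kappa _{E}}\left( \pi \right) $.

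The step I expect to be the real work is the bookkeeping: carrying the several multiplicative twists — $\mathrm{Nrd}_{f}$ versus $\mathrm{Nrd}_{p}$ versus $\mathrm{nrd}_{p}$, finite part versus $p$-part, the action of $\rho _{E}$ versus its (extended) central character — consistently through the change of variables, and checking that the passage between the left and right coset decompositions of $K_{p}^{\diamond }\pi _{p}^{\iota }K_{p}^{\diamond }$ introduces no spurious index. The structural skeleton, by contrast, is entirely formal given the material set up above.
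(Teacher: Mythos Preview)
Your proposal is correct and uses the same ingredients as the paper's proof: expand the Hecke operator via $\left(\text{\ref{F Hecke Operator}}\right)$, invoke the $\Sigma_p$-equivariance of the coefficient pairing, and extract the scalars from the explicit formula for $\mathrm{Nrd}_p^{\mathbf{k}}$ and the central characters. The paper's route differs in one structural point that sidesteps exactly the bookkeeping you flag as ``the real work'': rather than manipulating $\langle f\mid T_{\pi},g\rangle$ directly into a multiple of $\langle f,g\mid T_{\pi^{\iota}}\rangle$, the paper computes \emph{both} sides independently as single sums. It first absorbs the $u$-sum into the $x$-sum via $K$-invariance of $g$ and of $\mathrm{Nrd}_p^{\mathbf{k}}$, obtaining
\[
\mu(K)^{-1}\langle f\mid T_\pi,g\rangle=\sum_{y\in(K\cap\pi^{-1}K\pi)\backslash B_{f}^{\times}/B^{\times}}\frac{\langle f(\pi y)\pi_p,g(y)\rangle}{\mathrm{Nrd}_{p}^{\mathbf{k}}(y)},
\]
and by the same computation with roles swapped,
\[
\mu(K)^{-1}\langle f,g\mid T_{\pi^\iota}\rangle=\sum_{z\in(K\cap\pi^{-\iota}K\pi^{\iota})\backslash B_{f}^{\times}/B^{\times}}\frac{\langle f(z),g(\pi^{\iota}z)\pi_p^{\iota}\rangle}{\mathrm{Nrd}_{p}^{\mathbf{k}}(z)}.
\]
It then matches the two via the single change of variable $z=\pi y$, using that $y\mapsto\pi y$ is a bijection $H\backslash B_{f}^{\times}/B^{\times}\to\pi H\pi^{-1}\backslash B_{f}^{\times}/B^{\times}$ with $H=K\cap\pi^{-1}K\pi$ and $\pi H\pi^{-1}=K\cap\pi^{-\iota}K\pi^{\iota}$; the relation $\pi^{\iota}\pi=\mathrm{nrd}(\pi)\in Z_f$ then produces the $\omega_{0,p,E}$-factor in one stroke. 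This symmetric computation never needs to pass between left and right coset decompositions of $K\pi^{\iota}K$: in your direct approach, the elements $u^{\iota}\pi^{\iota}$ you arrive at are \emph{right}-coset representatives of $K\pi^{\iota}K$, and verifying that $\sum_u g\cdot(u^{\iota}\pi^{\iota})$ agrees with the left-coset Hecke sum $g\mid T_{\pi^\iota}$ requires an extra argument (it does work, but it is not automatic). What the paper's organization buys is that the only non-formal step is the coset bijection $z=\pi y$, and all the scalar bookkeeping collapses to a single line.
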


\begin{proof}
We have $T_{\pi }=\left[ K_{1}\pi K_{2}\right] $ with $K_{1}=K_{2}\in 
\mathcal{K}^{\diamond \diamond }$\ and we leave to the reader to check that
we may assume that $\left\vert \Gamma _{K_{i}}\left( x\right) \right\vert =1$
for all $x\in B_{f}^{\times }$ and $i=1,2$ and that we have $\mathrm{Nrd}%
_{p}^{\mathbf{k}}\left( x\right) =\mathrm{Nrd}_{p}^{\mathbf{k}}\left(
ux\right) \mathrm{nrd}_{p}^{\mathbf{k}}\left( u_{p}\right) $ for all $u\in
K_{i}$ and $i=1,2$: this is possible thanks to our assumptions that $\pi \in
\Sigma _{p}$ and that $f$, $g$ and $\mathrm{Nrd}_{p}$ are fixed by $%
K_{p}^{\diamond }$ (because $K_{i}\in \mathcal{K}^{\diamond }$). Having made
this reduction, we compute, for $p$-adic modular forms,%
\begin{eqnarray*}
\mu \left( K_{2}\right) ^{-1}\left\langle f\mid \left[ K_{1}\pi K_{2}\right]
,g\right\rangle &=&\sum\nolimits_{x\in K_{2}\backslash B_{f}^{\times
}/B^{\times }}\frac{\left\langle \left( f\mid \left[ K_{1}\pi K_{2}\right]
\right) \left( x\right) ,g\left( x\right) \right\rangle }{\mathrm{Nrd}_{p}^{%
\mathbf{k}}\left( x\right) } \\
&=&\sum\nolimits_{u\in \left( K_{2}\cap \pi ^{-1}K_{1}\pi \right) \backslash
K_{2},x\in K_{2}\backslash B_{f}^{\times }/B^{\times }}\frac{\left\langle
f\left( \pi ux\right) \pi _{p}u_{p},g\left( x\right) \right\rangle }{\mathrm{%
Nrd}_{p}^{\mathbf{k}}\left( x\right) } \\
&=&\sum\nolimits_{u\in \left( K_{2}\cap \pi ^{-1}K_{1}\pi \right) \backslash
K_{2},x\in K_{2}\backslash B_{f}^{\times }/B^{\times }}\frac{\left\langle
f\left( \pi ux\right) \pi _{p}u_{p},g\left( ux\right) u_{p}\right\rangle }{%
\mathrm{Nrd}_{p}^{\mathbf{k}}\left( ux\right) \chi _{p}\left( u_{p}\right) }
\\
&=&\sum\nolimits_{u\in \left( K_{2}\cap \pi ^{-1}K_{1}\pi \right) \backslash
K_{2},x\in K_{2}\backslash B_{f}^{\times }/B^{\times }}\frac{\left\langle
f\left( \pi ux\right) \pi _{p},g\left( ux\right) \right\rangle }{\mathrm{Nrd}%
_{p}^{\mathbf{k}}\left( ux\right) } \\
&=&\sum\nolimits_{y\in \left( K_{2}\cap \pi ^{-1}K_{1}\pi \right) \backslash
B_{f}^{\times }/B^{\times }}\frac{\left\langle f\left( \pi y\right) \pi
_{p},g\left( y\right) \right\rangle }{\mathrm{Nrd}_{p}^{\mathbf{k}}\left(
y\right) }\text{.}
\end{eqnarray*}%
Here we have employed $\left( \text{\ref{F Hecke Operator}}\right) $ in the
second equality, the $K_{2}$-invariance of $g$ and $\mathrm{Nrd}_{p}^{%
\mathbf{k}}$ in the third equality and the $K_{p}^{\diamond }$-equivariance
of $\left\langle -,-\right\rangle $ in the fourth equality. Letting $g$, $f$%
, $K_{2}$, $K_{1}$ and $\pi ^{\iota }$ play the roles of $f$, $g$, $K_{1}$, $%
K_{2}$ and $\pi $ respectively, we also see that%
\begin{equation*}
\mu \left( K_{1}\right) ^{-1}\left\langle f,g\mid \left[ K_{2}\pi ^{\iota
}K_{1}\right] \right\rangle =\sum\nolimits_{z\in \left( K_{1}\cap \pi
^{-\iota }K_{2}\pi ^{\iota }\right) \backslash B_{f}^{\times }/B^{\times }}%
\frac{\left\langle f\left( z\right) ,g\left( \pi ^{\iota }z\right) \pi
_{p}^{\iota }\right\rangle }{\mathrm{Nrd}_{p}^{\mathbf{k}}\left( z\right) }%
\text{.}
\end{equation*}%
Note, however, that $y\mapsto \pi y$ induces a well defined map $H\backslash
B_{f}^{\times }/B^{\times }\rightarrow \pi H\pi ^{-1}\backslash
B_{f}^{\times }/B^{\times }$ for any subgroup $H$ and we have $\pi ^{\iota
}H\pi ^{-\iota }=\pi ^{-1}H\pi $. Taking $H=K_{2}\cap \pi ^{-1}K_{1}\pi $ we
see that $\pi H\pi ^{-1}=K_{1}\cap \pi ^{-\iota }K_{2}\pi ^{\iota }$. Making
the change of variables $z=\pi y$, we have%
\begin{eqnarray*}
\mu \left( K_{1}\right) ^{-1}\left\langle f,g\mid \left[ K_{2}\pi ^{\iota
}K_{1}\right] \right\rangle &=&\sum\nolimits_{y\in \left( K_{2}\cap \pi
^{-1}K_{1}\pi \right) \backslash B_{f}^{\times }/B^{\times }}\frac{%
\left\langle f\left( \pi y\right) ,g\left( \pi ^{\iota }\pi y\right) \pi
_{p}^{\iota }\right\rangle }{\mathrm{Nrd}_{p}^{\mathbf{k}}\left( \pi
y\right) } \\
&=&\mathrm{nrd}_{p}^{\widetilde{\kappa _{E}}-\widetilde{\mathbf{k}}}\left(
\pi _{p}\right) \mathrm{Nrd}_{p}^{-\mathbf{k}}\left( \pi \right)
\sum\nolimits_{y\in \left( K_{2}\cap \pi ^{-1}K_{1}\pi \right) \backslash
B_{f}^{\times }/B^{\times }}\frac{\left\langle f\left( \pi y\right) \pi
_{p},g\left( \pi ^{\iota }\pi y\right) \right\rangle }{\mathrm{Nrd}_{p}^{%
\mathbf{k}}\left( y\right) }\text{.}
\end{eqnarray*}%
Here we have used $\left\langle v,w\pi _{p}^{\iota }\right\rangle =\mathrm{%
nrd}_{p}^{\widetilde{\kappa _{E}}-\widetilde{\mathbf{k}}}\left( \pi
_{p}\right) \left\langle v\pi _{p},w\right\rangle $. We now remark that $\pi
^{\iota }\pi =\mathrm{nrd}\left( \pi \right) \in Z_{f}$, so that $g\left(
\pi ^{\iota }\pi y\right) =\mathrm{nrd}_{f}^{\omega _{0,p,E}}\left( \pi
\right) g\left( y\right) $. It follows that%
\begin{equation*}
\mu \left( K_{1}\right) ^{-1}\left\langle f,g\mid \left[ K_{2}\pi ^{\iota
}K_{1}\right] \right\rangle =\mathrm{nrd}_{p}^{\widetilde{\kappa _{E}}-%
\widetilde{\mathbf{k}}}\left( \pi _{p}\right) \mathrm{Nrd}_{p}^{-\mathbf{k}%
}\left( \pi \right) \mathrm{nrd}_{f}^{\omega _{0,p,E}}\left( \pi \right) \mu
\left( K_{2}\right) ^{-1}\left\langle f\mid \left[ K_{1}\pi K_{2}\right]
,g\right\rangle \text{.}
\end{equation*}%
The relation $\mathrm{Nrd}_{p}^{\kappa }\left( x\right) =\left( \frac{%
\mathrm{Nrd}_{f}\left( x\right) _{p}}{\mathrm{nrd}_{p}\left( x_{p}\right) }%
\right) ^{\kappa }$ gives the claim.%
\begin{equation*}
\mathrm{nrd}_{p}^{\widetilde{\kappa _{E}}-\widetilde{\mathbf{k}}}\left( \pi
_{p}\right) \mathrm{Nrd}_{p}^{-\mathbf{k}}\left( \pi \right) =\mathrm{nrd}%
_{p}^{\widetilde{\kappa _{E}}-\widetilde{\mathbf{k}}}\left( \pi _{p}\right) 
\frac{\mathrm{Nrd}_{f}^{-\mathbf{k}}\left( \pi \right) _{p}}{\mathrm{nrd}%
_{p}^{-\mathbf{k}}\left( \pi _{p}\right) }=\mathrm{nrd}_{p}^{\widetilde{%
\kappa _{E}}}\left( \pi _{p}\right) \mathrm{Nrd}_{f}^{-\widetilde{\mathbf{k}}%
}\left( \pi \right) _{p}
\end{equation*}

For modular forms one finds, by a similar computation,%
\begin{eqnarray*}
&&\mu \left( K_{2}\right) ^{-1}\left\langle f\mid \left[ K_{1}\pi K_{2}%
\right] ,g\right\rangle _{K_{2}}=\sum\nolimits_{y\in \left( K_{2}\cap \pi
^{-1}K_{1}\pi \right) \backslash G_{f}/\Gamma }\frac{\left\langle f\left(
\pi y\right) ,g\left( y\right) \right\rangle }{\mathrm{Nrd}_{f}^{\mathbf{k}%
}\left( y\right) }\text{,} \\
&&\mu \left( K_{1}\right) ^{-1}\left\langle f,g\mid \left[ K_{2}\pi
^{-1}K_{1}\right] \right\rangle _{K_{1}}=\sum\nolimits_{z\in \left(
K_{1}\cap \pi K_{2}\pi ^{-1}\right) \backslash G_{f}/\Gamma }\frac{%
\left\langle f\left( z\right) ,g\left( \pi z\right) \right\rangle }{\mathrm{%
Nrd}_{f}^{\mathbf{k}}\left( z\right) }\text{.}
\end{eqnarray*}%
The first equality in this case follows and the second, which can also be
proved by a similar computation as above, is actually a consequence of the
first in this setting, since one checks $g\mid \left[ K_{2}\pi ^{-1}K_{1}%
\right] =\mathrm{nrd}_{f}^{-\kappa _{E}}\left( \pi \right) \cdot g\mid \left[
K_{2}\pi ^{\iota }K_{1}\right] $ (because $\mathrm{nrd}\left( \pi \right)
\in Z_{f}=Z_{B_{f}^{\times }}$).
\end{proof}

\section{\label{SSIchino}\label{SS p-adic definite quaternion}The special
value formula and its $p$-adic avatar}

We are now going to recall the special value formula proved in \cite{GS2},
specialized to the triple product case, which can be regarded as an explicit
version of Ichino's formula \cite{Ich} and a generalization of \cite{BoeSch}.

Let $E/\mathbb{Q}$ be a Galois splitting field for $B$ and fix $\mathbf{B}%
_{/E}\simeq \mathbf{M}_{2/E}$ inducing $\mathbf{B}_{/E}^{\mathbf{\times }%
}\simeq \mathbf{GL}_{2/E}$. If $k\in \mathbb{N}$ we let $\mathbf{P}_{k/E}$
be the left $\mathbf{GL}_{2/E}$-representation on two variables polynomials
of degree $k$, the action being defined by the rule $\left( gP\right) \left(
X,Y\right) =P\left( \left( X,Y\right) g\right) $. We write $\mathbf{V}_{k}$
for the dual right representation. If $\underline{k}:=\left(
k_{1},...,k_{r}\right) \in \mathbb{N}^{r}$, we may identify $\mathbf{P}%
_{k_{1}/E}\otimes ...\otimes \mathbf{P}_{k_{r}/E}$ with the space of $2r$%
-variable polynomials $\mathbf{P}_{\underline{k}/E}$ which are homogeneous
of degree $k_{i}$ in the $i$-th couple of variables $W_{i}:=\left(
X_{i},Y_{i}\right) $. Then $\mathbf{V}_{k_{1}/E}\otimes ...\otimes \mathbf{V}%
_{k_{r}/E}$ is identified with the dual $\mathbf{V}_{\underline{k}/E}$ of $%
\mathbf{P}_{\underline{k}/E}$ and any $P\in \mathbf{P}_{\underline{k}%
/E}\left( -r\right) ^{\mathbf{GL}_{2/E}}$, i.e. such that $gP=\mathrm{det}%
\left( g\right) ^{r}P$, induces%
\begin{equation*}
\Lambda _{P}\in Hom_{\mathbf{GL}_{2/E}}\left( \mathbf{V}_{\underline{k}/E},%
\mathbf{1}_{/E}\left( r\right) \right)
\end{equation*}%
by the rule $\Lambda _{P}\left( l\right) :=l\left( P\right) $. Note also
that, if $P\neq 0$ then there is $l$ such that $l\left( P\right) =1$ and we
see that $\Lambda _{P}\neq 0$. Setting $0\neq \delta ^{k}\left(
X_{1},Y_{1},X_{2},Y_{2}\right) :=\left\vert 
\begin{array}{cc}
X_{1} & Y_{1} \\ 
X_{2} & Y_{2}%
\end{array}%
\right\vert ^{k}$, we have $\delta ^{1}\left( W_{1}g,W_{2}g\right) =\mathrm{%
det}\left( g\right) \delta ^{1}\left( W_{1},W_{2}\right) $, from which it
follows that $\delta _{k}\in \mathbf{P}_{k,k/E}$ and $g\delta ^{k}=\mathrm{%
det}\left( g\right) ^{k}\delta ^{k}$. We deduce that $\left\langle
-,-\right\rangle _{k}:=\Lambda _{\delta ^{k}}\neq 0$ satisfies the above
requirement: then the irreducibility of $\mathbf{V}_{k/E}$\ implies that it
is perfect and symmetric.

If $\underline{k}:=\left( k_{1},k_{2},k_{3}\right) \in \mathbb{N}^{3}$, we
define the quantities $\underline{k}^{\ast }:=\frac{k_{1}+k_{2}+k_{3}}{2}$, $%
\underline{k}_{1}^{\ast }:=\frac{-k_{1}+k_{2}+k_{3}}{2}$, $\underline{k}%
_{2}^{\ast }:=\frac{k_{1}-k_{2}+k_{3}}{2}$ and $\underline{k}_{3}^{\ast }:=%
\frac{k_{1}+k_{2}-k_{3}}{2}$. With a slight abuse of notation, we write $%
\mathbf{P}_{\underline{k}/E}$ and $\mathbf{V}_{\underline{k}/E}$ to denote
the external tensor product, which is a representation of $\mathbf{GL}%
_{2/E}^{3}$. When $\underline{k}^{\ast }\in \mathbb{N}$ and $\underline{k}$
is balanced, we can also define%
\begin{equation*}
\Lambda _{\underline{k}/E}\in Hom_{\mathbf{GL}_{2/E}}\left( \mathbf{V}_{%
\underline{k}/E},\mathbf{1}_{/E}\left( \underline{k}^{\ast }\right) \right)
\end{equation*}%
as follows. The balanced condition precisely means that $\underline{k}%
_{i}^{\ast }\geq 0$ for $i=1,2,3$, so that we can consider%
\begin{equation*}
0\neq \Delta _{\underline{k}/E}:=\delta ^{\underline{k}_{1}^{\ast }}\left(
W_{2},W_{3}\right) \delta ^{\underline{k}_{2}^{\ast }}\left(
W_{1},W_{3}\right) \delta ^{\underline{k}_{3}^{\ast }}\left(
W_{1},W_{2}\right) \in \mathbf{P}_{\underline{k}/E}\text{.}
\end{equation*}%
We have $g\Delta _{\underline{k}/E}=\mathrm{det}\left( g\right) ^{\underline{%
k}^{\ast }}\Delta _{\underline{k}/E}$. Hence $\Delta _{\underline{k}/E}\in 
\mathbf{P}_{\underline{k}/E}\left( -\underline{k}^{\ast }\right) ^{\mathbf{GL%
}_{2/E}}$ and we may set $\Lambda _{\underline{k}/E}:=\Lambda _{\Delta _{%
\underline{k}/E}}\neq 0$. The following result is an application of the
Clebsch-Gordan decomposition that we leave to the reader.

\begin{lemma}
\label{Special value L7}Suppose that $2\underline{k}^{\ast
}=k_{1}+k_{2}+k_{3}\in 2\mathbb{N}$ and $\underline{k}$ is balanced.

\begin{itemize}
\item[$\left( 1\right) $] There is a representation $\mathbf{V}_{\underline{k%
}}$\ of $\mathbf{B}^{\times 3}\ $such that $E\otimes \mathbf{V}_{\underline{k%
}}\simeq \mathbf{V}_{\underline{k}/E}$ via $\mathbf{B}_{/E}^{\times 3}\simeq 
\mathbf{GL}_{2/E}^{3}$ and $\left\langle -,-\right\rangle _{\underline{k}%
}\in Hom_{\mathbf{B}^{\times }}\left( \mathbf{V}_{\underline{k}}\otimes 
\mathbf{V}_{\underline{k}},\mathbf{1}\left( \underline{k}\right) \right) $
such that $E\otimes \left\langle -,-\right\rangle _{\underline{k}}\simeq
\left\langle -,-\right\rangle _{\underline{k}/E}$

\item[$\left( 2\right) $] We have, setting $\mathbf{B}_{1}^{\times }:=%
\mathrm{ker}\left( \mathrm{nrd}\right) $,%
\begin{equation*}
\mathrm{dim}\left( Hom_{\mathbf{B}_{1}^{\times }}\left( \mathbf{V}_{%
\underline{k}},\mathbf{1}\right) \right) =\mathrm{dim}\left( Hom_{\mathbf{SL}%
_{2/E}}\left( \mathbf{V}_{\underline{k}/E},\mathbf{1}_{/E}\right) \right) =1%
\text{.}
\end{equation*}
\end{itemize}
\end{lemma}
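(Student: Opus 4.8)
The plan is to reduce both statements to Galois descent from the splitting field $E$ down to $\mathbb{Q}$, the only genuinely arithmetic input being the parity hypothesis $2\underline{k}^{\ast}=k_{1}+k_{2}+k_{3}\in 2\mathbb{N}$. For part (1) I would first produce the $\mathbb{Q}$-rational model $\mathbf{V}_{\underline{k}}$ of $\mathbf{V}_{\underline{k}/E}$. Since $\mathbf{B}^{\times}$ is an inner form of $\mathbf{GL}_{2}$ corresponding to the Brauer class $[B]$, which is $2$-torsion because $B$ is a quaternion algebra, the obstruction to descending the external tensor product $\mathbf{V}_{\underline{k}/E}=\mathbf{V}_{k_{1}}\otimes\mathbf{V}_{k_{2}}\otimes\mathbf{V}_{k_{3}}$ from $\mathbf{GL}_{2/E}^{3}$ to $\mathbf{B}^{\times 3}$ is a class in $\mathrm{Br}(\mathbb{Q})$, and a standard computation with the connecting map of $1\to\mathbf{G}_{m}\to\mathbf{GL}_{2}\to\mathbf{PGL}_{2}\to 1$ identifies it with $(k_{1}+k_{2}+k_{3})[B]$ (the central character of $\mathbf{V}_{k_{i}}$ having weight $\pm k_{i}$). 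As $[B]$ is $2$-torsion this depends only on the parity of $k_{1}+k_{2}+k_{3}$, so the hypothesis makes it vanish and $\mathbf{V}_{\underline{k}}$ exists. Alternatively one can build $\mathbf{V}_{\underline{k}}$ by hand from the manifestly $\mathbb{Q}$-rational representations $\mathbf{V}_{2}$ (conjugation of $\mathbf{B}^{\times}$ on trace-zero quaternions, twisted by $\mathrm{nrd}^{-1}$, so that every $\mathbf{V}_{2k}$ descends as the top Clebsch--Gordan constituent of $\mathbf{V}_{2}^{\otimes k}$) and $\mathbf{V}_{1}\otimes\mathbf{V}_{1}$ (realized on $\mathbf{B}$ with $\mathbf{B}^{\times}\times\mathbf{B}^{\times}$ acting through $(b_{1},b_{2})\cdot x=b_{1}xb_{2}^{\iota}$, so that $\mathbf{V}_{k_{1}}\otimes\mathbf{V}_{k_{2}}$ descends whenever $k_{1}\equiv k_{2}\pmod 2$), the parity condition guaranteeing that the number of odd $k_{i}$ is even, hence that one of these cases always applies.

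With $\mathbf{V}_{\underline{k}}$ fixed over $\mathbb{Q}$, the pairing follows from descent of Hom-spaces: $\mathbf{V}_{\underline{k}}\otimes\mathbf{V}_{\underline{k}}$ and the relevant twist $\mathbf{1}(\underline{k})$ of the trivial representation are $\mathbb{Q}$-rational, so $\Hom_{\mathbf{B}^{\times}}(\mathbf{V}_{\underline{k}}\otimes\mathbf{V}_{\underline{k}},\mathbf{1}(\underline{k}))\otimes_{\mathbb{Q}}E$ is the corresponding $E$-Hom-space, which is one-dimensional---it is spanned by $\langle-,-\rangle_{k_{1}}\otimes\langle-,-\rangle_{k_{2}}\otimes\langle-,-\rangle_{k_{3}}=\langle-,-\rangle_{\underline{k}/E}$, each $\langle-,-\rangle_{k_{i}}$ being nonzero and hence, by the irreducibility of $\mathbf{V}_{k_{i}/E}$, spanning its own one-dimensional Hom-space, as recalled before the Lemma. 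Therefore the $\mathbb{Q}$-Hom-space is one-dimensional, and a $\mathbb{Q}$-generator, rescaled by a suitable element of $E^{\times}$ so that it becomes $\langle-,-\rangle_{\underline{k}/E}$ after base change, is the desired $\langle-,-\rangle_{\underline{k}}$. This settles (1).

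For part (2), the first equality is again descent of Hom-spaces: $\mathbf{B}_{1}^{\times}=\mathrm{ker}(\mathrm{nrd})$ is a $\mathbb{Q}$-form of the diagonal $\mathbf{SL}_{2/E}$, the determinant twists disappear on restriction to $\mathbf{SL}_{2}$, and $\mathbf{V}_{\underline{k}}$ and $\mathbf{1}$ are $\mathbb{Q}$-rational, so $\dim_{\mathbb{Q}}\Hom_{\mathbf{B}_{1}^{\times}}(\mathbf{V}_{\underline{k}},\mathbf{1})=\dim_{E}\Hom_{\mathbf{SL}_{2/E}}(\mathbf{V}_{\underline{k}/E},\mathbf{1}_{/E})$. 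For the second equality I would apply the Clebsch--Gordan rule $\mathbf{V}_{a}\otimes\mathbf{V}_{b}\cong\bigoplus_{c}\mathbf{V}_{c}$ (sum over $c=|a-b|,|a-b|+2,\dots,a+b$, each with multiplicity one) twice: restricting $\mathbf{V}_{\underline{k}/E}$ to the diagonal $\mathbf{SL}_{2/E}$, the trivial representation occurs in $\mathbf{V}_{k_{1}}\otimes\mathbf{V}_{k_{2}}\otimes\mathbf{V}_{k_{3}}$ with multiplicity equal to that of $\mathbf{V}_{k_{3}}$ in $\mathbf{V}_{k_{1}}\otimes\mathbf{V}_{k_{2}}$, namely $1$ exactly when $|k_{1}-k_{2}|\le k_{3}\le k_{1}+k_{2}$ and $k_{1}+k_{2}+k_{3}$ is even, and $0$ otherwise; the inequalities $k_{3}\le k_{1}+k_{2}$ and $|k_{1}-k_{2}|\le k_{3}$ unpack into the three balancedness conditions and the parity is the standing hypothesis, so the multiplicity is $1$. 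I expect the real obstacle to be the descent step in (1)---one must be sure $\mathbf{V}_{\underline{k}/E}$ genuinely descends to $\mathbf{B}^{\times 3}$, and it is exactly there that the parity of $k_{1}+k_{2}+k_{3}$ is used (for odd sum the obstruction $(k_{1}+k_{2}+k_{3})[B]=[B]$ is nonzero and no descent exists); everything downstream is formal descent together with a one-line Clebsch--Gordan weight count.
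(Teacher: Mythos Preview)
Your proof is correct and supplies precisely the details the paper omits: the paper's own ``proof'' is the single sentence ``The following result is an application of the Clebsch--Gordan decomposition that we leave to the reader,'' so your Clebsch--Gordan count in part~(2) is exactly what was intended, and your Galois-descent/Brauer-obstruction argument (or the explicit hands-on construction via trace-zero quaternions and the two-sided action on $\mathbf{B}$) for part~(1) fills in what the paper does not address at all. Your identification of the obstruction as $(k_{1}+k_{2}+k_{3})[B]$ and the role of the parity hypothesis is spot on.
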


For $i=1,2,3$, let $\omega _{i}$ be an unitary Hecke character of the form $%
\omega _{i}=\omega _{f,i}\otimes sgn\left( -\right) ^{k_{i}}$ and set $%
\omega _{0,i}:=\omega _{f,i}\mathrm{Nrd}_{f}^{k_{i}/2}$. Assuming that $%
\omega _{1}\omega _{2}\omega _{3}=1$, we see that%
\begin{equation}
\underline{k}^{\ast }\in \mathbb{N}\text{ and }\mathrm{Nrd}_{f}^{\underline{k%
}^{\ast }}=\omega _{1,0}\omega _{2,0}\omega _{3,0}\text{.}
\label{F balanced}
\end{equation}%
It follows from an adelic version of the Peter-Weyl Theorem (see \cite{GS2})
that, if $\pi _{i}=\pi _{i,f}\otimes \mathbf{V}_{k_{i},\mathbb{C}}^{u}$ is
an irreducible unitary automorphic form with central character $\omega _{i}$%
\ (and $\mathbf{V}_{k_{i},\mathbb{C}}^{u}$ the unitary twist of $\mathbf{V}%
_{k_{i},\mathbb{C}}$),\ there is a canonical identification%
\begin{equation*}
f_{i}:\mathbf{V}_{k_{i},\mathbb{C}}^{\vee }\otimes _{\mathbb{C}}M\left(
B_{f}^{\times },\mathbf{V}_{k_{i},\mathbb{C}},\omega _{0,i}\right) \left[ 
\mathrm{Nrd}_{f}^{-k_{i}/2}\pi _{i,f}\right] \simeq A\left( \mathbf{B}%
^{\times }\left( \mathbb{A}\right) ,\omega _{i}\right) \left[ \pi _{i}\right]
\text{,}
\end{equation*}%
where $\left( -\right) \left[ \theta \right] $ means taking the $\theta $%
-component and $A\left( \mathbf{B}^{\times }\left( \mathbb{A}\right) ,\omega
_{i}\right) $ is the space of $K$-finite automorphic forms. We remark the we
could have considered automorphic forms for the algebraic group $\mathbf{B}%
^{\times 3}$ and, with $\Pi :=\pi _{1}\otimes \pi _{2}\otimes \pi _{3}$, so
that $\Pi =\Pi _{f}\otimes \mathbf{V}_{\underline{k},\mathbb{C}}^{u}$, we
have%
\begin{equation*}
f:\mathbf{V}_{\underline{k},\mathbb{C}}^{\vee }\otimes _{\mathbb{C}}M\left(
B_{f}^{\times 3},\mathbf{V}_{\underline{k},\mathbb{C}},\omega _{0}\right) %
\left[ \mathrm{Nrd}_{f}^{-\underline{k}/2}\Pi _{f}\right] \simeq A\left( 
\mathbf{B}^{\times 3}\left( \mathbb{A}\right) ,\omega \right) \left[ \Pi %
\right] \text{,}
\end{equation*}%
where $\omega =\left( \omega _{1},\omega _{2},\omega _{3}\right) $, $\mathrm{%
Nrd}_{f}^{-\underline{k}/2}\left( x_{1},x_{2},x_{3}\right) :=\mathrm{Nrd}%
_{f}^{-k_{1}/2}\left( x_{1}\right) \mathrm{Nrd}_{f}^{-k_{2}/2}\left(
x_{2}\right) \mathrm{Nrd}_{f}^{-k_{3}/2}\left( x_{3}\right) $ and $\omega
_{0}:=\omega _{f}\mathrm{Nrd}_{f}^{\underline{k}/2}$.

It follows from $\left( \text{\ref{F balanced}}\right) $\ that we can
consider the quantity $t_{\underline{k}}:=J\left( \Lambda _{\underline{k}%
/E}\right) $ defined by $\left( \text{\ref{p-adic F Prof n-Form2}}\right) $:%
\begin{equation*}
t_{\underline{k}}:M\left( B_{f}^{\times 3},\mathbf{V}_{\underline{k}%
,E},\omega _{0}\right) =M\left( B_{f}^{\times },\mathbf{V}_{k_{1},E},\omega
_{0,1}\right) \otimes _{E}M\left( B_{f}^{\times },\mathbf{V}%
_{k_{2},E},\omega _{0,2}\right) \otimes _{E}M\left( B_{f}^{\times },\mathbf{V%
}_{k_{3},E},\omega _{0,3}\right) \rightarrow E\text{.}
\end{equation*}%
The choice of $\Lambda _{\underline{k}/E}\in \mathbf{V}_{\underline{k}%
,E}^{\vee }$ yields%
\begin{equation*}
f_{\Lambda _{\underline{k}/E}}:M\left( B_{f}^{\times 3},\mathbf{V}_{%
\underline{k},E},\omega _{0}\right) \left[ \mathrm{Nrd}_{f}^{-\underline{k}%
/2}\Pi _{f}\right] \hookrightarrow A\left( \mathbf{B}^{\times 3}\left( 
\mathbb{A}\right) ,\omega \right) \left[ \Pi \right] \text{.}
\end{equation*}%
The following result is deduced in \cite{GS2} from \cite{Ich} or \cite{HK}\
and the Jacquet conjecture proved in \cite{HK}.

\begin{theorem}
\label{Special value T Ichino}Suppose that $\underline{k}$ is balanced and
that $\omega _{i}=\omega _{i,f}\otimes sgn\left( -\right) ^{k_{i}}$ are
unitary Hecke characters such that $\omega _{1}\omega _{2}\omega _{3}=1$,
implying $\underline{k}^{\ast }\in \mathbb{N}$. Consider the quantity%
\begin{equation*}
t_{\underline{k}}\left( \varphi \right) =\mu \left( K_{\varphi }\right)
\tsum\limits_{x\in K_{\varphi }\backslash B_{f}^{\times }/B^{\times }}\frac{%
\Lambda _{\underline{k}}\left( \varphi _{1}\left( x\right) \otimes \varphi
_{2}\left( x\right) \otimes \varphi _{3}\left( x\right) \right) }{\left\vert
\Gamma _{K_{\varphi }}\left( x\right) \right\vert \mathrm{Nrd}_{f}^{%
\underline{k}^{\ast }}\left( x\right) }\text{,}
\end{equation*}%
where $K_{\varphi }\in \mathcal{K}$ is such that $K_{\varphi }\subset
K_{\varphi _{1}}\cap K_{\varphi _{2}}\cap K_{\varphi _{3}}$ and%
\begin{equation*}
\varphi =\varphi _{1}\otimes \varphi _{2}\otimes \varphi _{3}\in
\tbigotimes\nolimits_{i=1}^{3}M\left( B_{f}^{\times },\mathbf{V}_{k_{i},%
\mathbb{C}},\omega _{i,0}\right) ^{K_{\varphi _{i}}}=M\left( B_{f}^{\times
3},\mathbf{V}_{\underline{k},E},\omega _{0}\right) ^{K_{\varphi _{1}}\times
K_{\varphi _{2}}\times K_{\varphi _{3}}}\text{.}
\end{equation*}%
Suppose that $B=B_{\Pi }$ is the quaternion algebra predicted by \cite{Pr}.

\begin{itemize}
\item[$\left( 1\right) $] We have the equality%
\begin{equation*}
t_{\underline{k}}^{2}=\frac{1}{2^{3}m_{\mathbf{Z}_{\mathbf{B}}\mathbf{%
\backslash B},\infty }^{2}}\frac{\zeta _{\mathbb{Q}}^{2}\left( 2\right)
L\left( 1/2,\Pi \right) }{L\left( 1,\Pi ,\mathrm{Ad}\right) }%
\prod\nolimits_{v}\alpha _{v}\left( -\right)
\end{equation*}%
as functionals on%
\begin{equation*}
f_{\Lambda _{\underline{k}/E}}:M\left( B_{f}^{\times 3},\mathbf{V}_{%
\underline{k},E},\omega _{0}\right) \left[ \mathrm{Nrd}_{f}^{-\underline{k}%
/2}\Pi _{f}\right] \hookrightarrow A\left( \mathbf{B}^{\times 3}\left( 
\mathbb{A}\right) ,\omega \right) \left[ \Pi \right] \text{.}
\end{equation*}%
Here the quantities appearing in right hand side have a similar nature as
those in $\left( \text{\ref{Intro F special value}}\right) $ (see \cite{Ich}%
).

\item[$\left( 2\right) $] Define $\widetilde{t}_{\underline{k}}\left(
\Lambda \otimes _{\mathbb{C}}\varphi \right) :=\lambda t_{\underline{k}%
}\left( \varphi \right) $ when $\Lambda =\lambda \Lambda _{\underline{k}}$
and $\widetilde{t}_{\underline{k}}\left( \Lambda \otimes _{\mathbb{C}%
}\varphi \right) :=0$ for $\Lambda $ orthogonal to $\Lambda _{\underline{k}}$
in $\mathbf{V}_{\underline{k},\mathbb{C}}^{\vee }$ (with respect to $%
\left\langle -,-\right\rangle _{\underline{k}}$), we have%
\begin{equation*}
\widetilde{t}_{\underline{k}}^{2}=\frac{1}{2^{3}m_{\mathbf{Z}_{\mathbf{B}}%
\mathbf{\backslash B},\infty }^{2}}\frac{\zeta _{\mathbb{Q}}^{2}\left(
2\right) L\left( 1/2,\Pi \right) }{L\left( 1,\Pi ,\mathrm{Ad}\right) }%
\prod\nolimits_{v}\alpha _{v}\left( -\right)
\end{equation*}%
as functionals on%
\begin{equation*}
f:\mathbf{V}_{\underline{k},\mathbb{C}}^{\vee }\otimes _{\mathbb{C}}M\left(
B_{f}^{\times 3},\mathbf{V}_{\underline{k},\mathbb{C}},\omega _{0}\right) %
\left[ \mathrm{Nrd}_{f}^{-\underline{k}/2}\Pi _{f}\right] \simeq A\left( 
\mathbf{B}^{\times 3}\left( \mathbb{A}\right) ,\omega \right) \left[ \Pi %
\right]
\end{equation*}%
and this rule extends to a morphism of functors from modular forms with
coefficients in $\mathbb{Q}\left( \omega _{f}\right) $-algebras to $\mathbf{A%
}^{1}$.

\item[$\left( 3\right) $] Suppose that $\Pi ^{\prime }$ is an automorphic
representation of $\mathbf{GL}_{2}^{3}$ and that $B=B_{\Pi ^{\prime }}$ is
the quaternion algebra predicted by \cite{Pr}. Then $L\left( \Pi ^{\prime
},1/2\right) \neq 0$ if and only if $t_{\underline{k}}\neq 0$ on $M\left(
B_{f}^{\times 3},\mathbf{V}_{\underline{k},E},\omega _{0}\right) \left[ 
\mathrm{Nrd}_{f}^{-\underline{k}/2}\Pi _{f}\right] $ with $\Pi =\Pi
_{f}\otimes \mathbf{V}_{\underline{k},\mathbb{C}}^{u}$ corresponding to $\Pi
^{\prime }$ by the Jacquet-Langlands correspondence (hence $B=B_{\Pi }$).
\end{itemize}
\end{theorem}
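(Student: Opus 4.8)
The plan is to transport the special value formula of \cite{GS2} --- itself an explicit reworking of Ichino's formula \cite{Ich}, based on \cite{HK} --- into the present notation, and then to read off parts $(2)$ and $(3)$ formally, the latter through the Jacquet conjecture of \cite{HK}; I treat the three assertions in turn.

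For part $(1)$ I would first unwind $(\ref{p-adic F Prof n-Form2})$ together with the definitions of $M(\Lambda)$ and $J(\Lambda)$, so that $t_{\underline{k}}=J(\Lambda_{\underline{k}/E})$ becomes literally the finite sum over $K_\varphi\backslash B_f^\times/B^\times$ displayed in the statement. The core step is then to identify this sum, via the adelic Peter--Weyl isomorphism $f_{\Lambda_{\underline{k}/E}}$ recalled just above the theorem, with the global trilinear period integral
\begin{equation*}
I_{\underline{k}}(\phi)=\int_{[\mathbf{B}^\times(\mathbb{A})]_{\mathbf{Z}}}\phi_1(g)\phi_2(g)\phi_3(g)\,d\mu_{[\mathbf{B}^\times(\mathbb{A})]_{\mathbf{Z}}}(g),\qquad \phi=f_{\Lambda_{\underline{k}/E}}(\varphi),
\end{equation*}
appearing in Ichino's formula $(\ref{Intro F special value})$, up to the archimedean normalization $m_{\mathbf{Z}_{\mathbf{B}}\mathbf{\backslash B},\infty}$. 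The mechanism is that, $B$ being definite, the group $\mathbf{Z}(F_\infty)\backslash\mathbf{B}^\times(F_\infty)$ is compact, so the inner archimedean integral in the measure decomposition fixed above collapses to the evaluation of the $\mathbf{V}_{\underline{k},\mathbb{C}}$-component of $\phi$ against $\Lambda_{\underline{k}}$, weighted by the total mass $m_{\mathbf{Z}_{\mathbf{B}}\mathbf{\backslash B},\infty}$; the remaining integral over $[B_f^\times]_{\mathbf{Z}}$ decomposes along the finite double coset set $K_\varphi\backslash B_f^\times/B^\times$ with the rational volume factors $\mu(K_\varphi)/|\Gamma_{K_\varphi}(x)|$; and the twist by $\mathrm{Nrd}_f^{\underline{k}^\ast}$ accounts, via $(\ref{F balanced})$, for the passage between the $\omega_0$-normalization of quaternionic modular forms and the unitary central character $\omega$ of $\Pi$. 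Once this identification is in place --- the requisite bookkeeping being carried out in \cite[\S3]{GS2} --- the special value formula of \cite{GS2} (equivalently, Ichino's formula in the definite case) is precisely the asserted equality of functionals, the factor $m_{\mathbf{Z}_{\mathbf{B}}\mathbf{\backslash B},\infty}^{-2}$ coming from the two archimedean evaluations implicit in $t_{\underline{k}}^2$ and the $\alpha_v$ being the normalized local matrix-coefficient integrals of \cite{Ich}.

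For part $(2)$ I would deduce the statement formally from $(1)$. By Lemma \ref{Special value L7}, $\langle-,-\rangle_{\underline{k}}$ is a perfect symmetric $\mathbf{B}^\times$-pairing on $\mathbf{V}_{\underline{k}}$ defined over $\mathbb{Q}$ and $\mathrm{Hom}_{\mathbf{B}_1^\times}(\mathbf{V}_{\underline{k}},\mathbf{1})$ is one-dimensional, spanned by the class of $\Lambda_{\underline{k}}$; hence under $f$ the summand $\Lambda_{\underline{k}}^\perp\otimes_{\mathbb{C}}M(B_f^{\times3},\mathbf{V}_{\underline{k},\mathbb{C}},\omega_0)[\mathrm{Nrd}_f^{-\underline{k}/2}\Pi_f]$ maps onto automorphic forms on which the trilinear period $I_{\underline{k}}$ vanishes identically. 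So $\widetilde{t}_{\underline{k}}$ --- equal to $\lambda\,t_{\underline{k}}$ on $(\lambda\Lambda_{\underline{k}})\otimes(-)$ and to $0$ on $\Lambda_{\underline{k}}^\perp\otimes(-)$ --- is, up to $m_{\mathbf{Z}_{\mathbf{B}}\mathbf{\backslash B},\infty}$, the pullback $I_{\underline{k}}\circ f$, whence $\widetilde{t}_{\underline{k}}^2$ obeys the same identity, now as a functional on the source of $f$. For the functoriality assertion I would note that the decomposition $\mathbf{V}_{\underline{k}}^\vee=R\Lambda_{\underline{k}}\oplus\Lambda_{\underline{k}}^\perp$, the pairing $\langle-,-\rangle_{\underline{k}}$, the morphisms $J(\Lambda)$, $M(\Lambda)$ and the sum formula for $t_{\underline{k}}$ all involve only data rational over $\mathbb{Q}(\omega_f)$ (rational volume factors, the $\mathbb{Q}_+^\times$-valued $\mathrm{Nrd}_f$, and $\Lambda_{\underline{k}}$, $\langle-,-\rangle_{\underline{k}}$ defined over $\mathbb{Q}$ by Lemma \ref{Special value L7}$(1)$), and therefore commute with base change along $\mathbb{Q}(\omega_f)$-algebras; this upgrades $\varphi\mapsto\widetilde{t}_{\underline{k}}(\varphi)$ to a morphism of functors valued in $\mathbf{A}^1$.

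For part $(3)$ I would apply part $(1)$ with $B=B_{\Pi'}$ the algebra predicted by \cite{Pr} and $\Pi$ the Jacquet--Langlands transfer of $\Pi'$ to $\mathbf{B}^{\times3}(\mathbb{A})$: under the identification $f_{\Lambda_{\underline{k}/E}}$, and up to the explicit nonzero constants in the formula of $(1)$ and the factor $m_{\mathbf{Z}_{\mathbf{B}}\mathbf{\backslash B},\infty}$, the restriction of $t_{\underline{k}}$ to $M(B_f^{\times3},\mathbf{V}_{\underline{k},E},\omega_0)[\mathrm{Nrd}_f^{-\underline{k}/2}\Pi_f]$ corresponds to the global trilinear period $I_{\underline{k}}$ on $A(\mathbf{B}^{\times3}(\mathbb{A}),\omega)[\Pi]$. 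I would then invoke the Jacquet conjecture proved in \cite{HK}, which asserts precisely that for this $B$ the period $I_{\underline{k}}$ is not identically zero on $A(\mathbf{B}^{\times3}(\mathbb{A}),\omega)[\Pi]$ if and only if $L(\Pi',1/2)=L(\Pi,1/2)\neq0$; combined with the identification this yields the equivalence. (The forward direction is in any case immediate from the formula in $(1)$; for the converse one uses that, for the Prasad-predicted $B$, every local factor $\alpha_v$ --- including $\alpha_\infty$, a nonzero multiple of $\Lambda_{\underline{k}}$ --- is nonvanishing on a suitable test vector, so $\prod_v\alpha_v$ does not vanish identically and $t_{\underline{k}}^2$ is a nonzero multiple of $L(\Pi,1/2)$.) The conceptual content here is entirely supplied by \cite{GS2,Ich,HK,Pr}; the step I expect to demand the most care is the matching of measure normalizations and local factors in part $(1)$, in particular tracking the total mass $m_{\mathbf{Z}_{\mathbf{B}}\mathbf{\backslash B},\infty}$ against the Tamagawa-measure normalization fixed above, everything else then being a citation.
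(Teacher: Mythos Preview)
Your proposal is correct and matches the paper's own treatment: the paper does not give a self-contained proof of this theorem but simply records that it ``is deduced in \cite{GS2} from \cite{Ich} or \cite{HK} and the Jacquet conjecture proved in \cite{HK}.'' Your sketch --- identifying $t_{\underline{k}}$ with the global trilinear period via the adelic Peter--Weyl isomorphism and the measure decomposition fixed at the start of \S2, then invoking Ichino's formula for part~(1), reading off part~(2) formally from Lemma~\ref{Special value L7}, and appealing to the Jacquet conjecture of \cite{HK} for part~(3) --- is exactly the route the paper attributes to \cite{GS2}, with the bookkeeping on $m_{\mathbf{Z}_{\mathbf{B}}\backslash\mathbf{B},\infty}$ and the $\mathrm{Nrd}_f^{\underline{k}^\ast}$-twist being precisely the content of \cite[\S3]{GS2}.
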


\bigskip

Regarding $\widetilde{t}_{\underline{k}}^{2}$ as the algebraic part of $%
L\left( 1/2,\Pi \right) $ (see \cite{GS2} for a justification), it follows
from Theorem \ref{Special value T Ichino} that the relevant part to be
interpolated is $t_{\underline{k}}$. Applying Proposition \ref{P1}, we place 
$t_{\underline{k}}$ in a $p$-adic setting, making it correspond to $t_{%
\underline{k}}:=J_{p}\left( \Lambda _{\underline{k}/\mathbb{Q}_{p}}\right) $%
\begin{equation}
t_{\underline{k}}\left( \varphi _{1},\varphi _{2},\varphi _{3}\right) =\mu
\left( K_{\varphi }\right) \tsum\limits_{x\in K_{\varphi }\backslash \mathbf{%
B}\left( \mathbb{A}_{f}\right) /\mathbf{B}\left( F\right) }\frac{\Lambda _{%
\underline{k}/\mathbb{Q}_{p}}\left( \varphi _{1}\left( x\right) \otimes
\varphi _{2}\left( x\right) \otimes \varphi _{3}\left( x\right) \right) }{%
\left\vert \Gamma _{K_{\varphi }}\left( x\right) \right\vert \mathrm{Nrd}%
_{p}^{\underline{k}^{\ast }}\left( x\right) }\text{.}
\label{F trilinear alg}
\end{equation}%
We have already interpolated the association $\underline{k}\mapsto \mathrm{%
Nrd}_{p}^{\underline{k}^{\ast }}\left( x\right) $ in \S \ref{SS Norm forms}
and we will now proceed to interpolate the association $\underline{k}\mapsto
\Lambda _{\underline{k}/\mathbb{Q}_{p}}$. To this end, we first review and
prove some facts on distribution modules, by means of which $p$-adic
families of modular forms are defined.

\section{Spaces of homogeneous $p$-adic distribution spaces}

\subsection{Locally analytic homogeneous distributions}

By a $p$-adic manifold $X$ we always mean a locally compact and paracompact
manifold over a fixed spherically complete non-archimedean $p$-adic field.
For a Banach algebra $\mathcal{O}$, we let $\mathcal{A}\left( X,\mathcal{O}%
\right) $ be the space of $\mathcal{O}$-valued locally analytic functions on 
$X$ and set $\mathcal{D}\left( X,\mathcal{O}\right) :=\mathcal{L}_{\mathcal{O%
}}\left( \mathcal{A}\left( X,\mathcal{O}\right) ,\mathcal{O}\right) \subset 
\mathcal{L}\left( \mathcal{A}\left( X,\mathcal{O}\right) ,\mathcal{O}\right) 
$, the strong $\mathcal{O}$-dual of $\mathcal{A}\left( X,\mathcal{O}\right) $%
. If $f:X\rightarrow Y$ is a morphism of $p$-adic manifolds, we have%
\begin{equation*}
f_{\mathcal{O}}^{\ast }:\mathcal{A}\left( Y,\mathcal{O}\right) \rightarrow 
\mathcal{A}\left( X,\mathcal{O}\right) \text{ and }f_{\ast }^{\mathcal{O}}:%
\mathcal{D}\left( X,\mathcal{O}\right) \rightarrow \mathcal{D}\left( Y,%
\mathcal{O}\right) \text{,}
\end{equation*}%
the first being the pull-back of functions $f_{\mathcal{O}}^{\ast }\left(
F\right) :=F\circ f$ and the second operation being the strong $\mathcal{O}$%
-dual of the first. We note that%
\begin{equation}
f_{\ast }^{\mathcal{O}}\left( \delta _{x}^{\mathcal{O}}\right) =\delta
_{f\left( x\right) }^{\mathcal{O}}\text{, for every }x\in X
\label{Distributions F0}
\end{equation}%
if $\delta _{\cdot }^{\mathcal{O}}:X\rightarrow \mathcal{D}\left( X,\mathcal{%
O}\right) $ denotes the Dirac distribution map. It is useful to remark that
the $\mathcal{O}$-linear span of $\left\{ \delta _{x}^{\mathcal{O}}:x\in
X\right\} $ is dense in $\mathcal{D}\left( X,\mathcal{O}\right) $ (see \cite[%
\S 6.1]{GS}): we refer to this fact using the set phrase "by density of
Dirac distributions". It can be shown that there are topological
identifications\footnote{%
We write $V\otimes _{\iota }W$ (resp. $V\otimes W$)\ to denote $V\otimes W$
with the inductive (resp. projective)\ tensor topology.}%
\begin{equation*}
\mathbf{T}_{\mathcal{D}\left( X\right) }^{\mathcal{O}}:\mathcal{O}\widehat{%
\otimes }\mathcal{D}\left( X\right) \overset{\sim }{\rightarrow }\mathcal{D}%
\left( X,\mathcal{O}\right)
\end{equation*}%
and%
\begin{equation*}
\mathbf{P}_{\mathcal{D}\left( X_{1}\right) ,\mathcal{D}\left( X_{2}\right)
}^{\mathcal{O}_{1},\mathcal{O}_{2}}:\mathcal{D}\left( X_{1},\mathcal{O}%
_{1}\right) \widehat{\otimes }_{\iota }\mathcal{D}\left( X_{2},\mathcal{O}%
_{2}\right) \overset{\sim }{\rightarrow }\mathcal{D}\left( X_{1}\times X_{2},%
\mathcal{O}_{1}\widehat{\otimes }\mathcal{O}_{2}\right) \text{.}
\end{equation*}%
They are characterized by the equalities:%
\begin{equation}
\mathbf{T}_{\mathcal{D}\left( X\right) }^{\mathcal{O}}\left( 1_{\mathcal{O}}%
\widehat{\otimes }\delta _{x}\right) =\delta _{x}^{\mathcal{O}}\text{ and }%
\mathbf{P}_{\mathcal{D}\left( X_{1}\right) ,\mathcal{D}\left( X_{2}\right)
}^{\mathcal{O}_{1},\mathcal{O}_{2}}\left( \delta _{x_{1}}^{\mathcal{O}_{1}}%
\widehat{\otimes }_{\iota }\delta _{x_{2}}^{\mathcal{O}_{2}}\right) =\delta
_{\left( x_{1},x_{2}\right) }^{\mathcal{O}_{1}\widehat{\otimes }\mathcal{O}%
_{2}}\text{.}  \label{Distributions F1}
\end{equation}%
We will usually suppress the reference to the Banach algebra when this is
the fixed $p$-adic field.

\bigskip

Suppose from now on that $X$ is endowed with the action of a $p$-adic Lie
group $T$, meaning that the action is given by a locally analytic map $%
a:T\times X\rightarrow X$. Then $T$ naturally acts from the right on $%
\mathcal{A}\left( X,\mathcal{O}\right) $ and from the left on $\mathcal{D}%
\left( X,\mathcal{O}\right) $. The left action of $T$ on $\mathcal{D}\left(
X\right) $ can be extended, with respect to $\delta _{\cdot }:T\rightarrow 
\mathcal{D}\left( T\right) $, to a left action of $\mathcal{D}\left(
T\right) $ making $\mathcal{D}\left( X\right) $ a $\mathcal{D}\left(
T\right) $-module by the convolution product:%
\begin{equation}
\mathcal{D}\left( T\right) \otimes _{\iota }\mathcal{D}\left( X\right) 
\overset{\mathbf{P}_{\mathcal{D}\left( T\right) ,\mathcal{D}\left( X\right) }%
}{\rightarrow }\mathcal{D}\left( T\times X\right) \overset{a_{\ast }}{%
\rightarrow }\mathcal{D}\left( X\right) \text{.}  \label{Distributions D4}
\end{equation}%
We note the formula%
\begin{equation}
\delta _{t}\cdot \delta _{x}=\delta _{tx}\text{ for }t\in T\text{ and }x\in X%
\text{,}  \label{Distributions F2}
\end{equation}%
which indeed characterizes the multiplication law by density of the Dirac
distributions. Also we remark that the multiplication map is in general
separately continuous, while it is continuous if we assume that $T$ and $X$
are compact. In particular, one checks that $\mathcal{D}\left( T\right) $
becomes an algebra in this way. We write $Hom_{\mathcal{A}}\left( T,\mathcal{%
O}^{\times }\right) $ to denote the group of those group homomorphisms such
that their composition with the inclusion $\mathcal{O}^{\times }\subset 
\mathcal{O}$ belongs to $\mathcal{A}\left( T,\mathcal{O}\right) $. We also
write $Hom_{\mathcal{L}}\left( \mathcal{D}\left( T\right) ,\mathcal{O}%
\right) $ to denote the space of those morphisms of locally convex spaces
that are morphisms of algebras. Then there is a bijection (see \cite[Lemma
6.2]{GS})%
\begin{equation}
C^{\mathcal{O}}:Hom_{\mathcal{L}}\left( \mathcal{D}\left( T\right) ,\mathcal{%
O}\right) \overset{\sim }{\rightarrow }Hom_{\mathcal{A}}\left( T,\mathcal{O}%
^{\times }\right) \text{, via }C^{\mathcal{O}}\left( \mathbf{k}\right)
\left( t\right) :=\mathbf{k}\left( \delta _{t}\right) \text{.}
\label{Distributions F Weight Id}
\end{equation}%
We will abuse of notations, when there will be no risk of confusion, and
identify these two sets, deserving the exponential notation to the group
homomorphisms and calling the elements of these sets weights.

If $\mathbf{k}$ is a weight, we may consider the space of locally analytic
homogeneous functions:%
\begin{equation*}
\mathcal{A}_{\mathbf{k}}\left( X\right) =\mathcal{A}\left( X,\mathbf{k}%
\right) =\left\{ F\in \mathcal{A}\left( X,\mathcal{O}\right) :F\left(
tx\right) =t^{\mathbf{k}}F\left( x\right) \right\} \text{.}
\end{equation*}%
It is indeed a closed $\mathcal{O}$-submodule of $\mathcal{A}\left( X,%
\mathcal{O}\right) $. Viewing both $\mathcal{O}$ and $\mathcal{D}\left(
X\right) $ as $\mathcal{D}\left( T\right) $-modules by means of $\mathbf{k}$
and, respectively, the convolution product, we may define%
\begin{equation*}
\mathcal{D}_{\mathbf{k}}\left( X\right) :=\mathcal{O}\widehat{\otimes }_{%
\mathbf{k}}\mathcal{D}\left( X\right) \text{ and }\mathcal{D}\left( X,%
\mathbf{k}\right) :=\mathcal{L}_{\mathcal{O}}\left( \mathcal{A}_{\mathbf{k}%
}\left( X\right) ,\mathcal{O}\right) \text{.}
\end{equation*}%
We also assume from no on that $X$ is endowed with a right action by a
semigroup $\Sigma $ such that $\sigma :\Sigma \rightarrow \Sigma $ is
locally analytic for every $\sigma \in \Sigma $, which is compatible with
the left $T$-action in the sense that $t(x\sigma )=(tx)\sigma $ for all $%
t\in T$, $x\in X$, and $\sigma \in \Sigma $. It follows that $\sigma $
induces a well defined action on $\mathcal{A}_{\mathbf{k}}\left( X\right) $, 
$\mathcal{D}_{\mathbf{k}}\left( X\right) $ and $\mathcal{D}\left( X,\mathbf{k%
}\right) $. The relation between the space $\mathcal{D}_{\mathbf{k}}\left(
X\right) $ and $\mathcal{D}\left( X,\mathbf{k}\right) $ is expressed by
means of an $\left( \mathcal{O},\Sigma \right) $-equivariant morphism of
locally convex spaces%
\begin{equation}
\mathbf{T}_{\mathcal{D}\left( X\right) }^{\mathbf{k}}:\mathcal{D}_{\mathbf{k}%
}\left( X\right) \rightarrow \mathcal{D}\left( X,\mathbf{k}\right)
\label{Distributions F Int}
\end{equation}%
which is an isomorphism when $X$ is a trivial (equivalently locally trivial)$%
\ T$-bundle. It is characterized by the property that%
\begin{equation*}
\mathbf{T}_{\mathcal{D}\left( X\right) }^{\mathbf{k}}\left( 1\widehat{%
\otimes }_{\mathbf{k}}\delta _{x}\right) =\delta _{x}^{\mathbf{k}}\text{ for
every }x\in X\text{,}
\end{equation*}%
if $\delta _{x}^{\mathbf{k}}$ is the image of $\delta _{x}^{\mathcal{O}}$.
We refer the reader to \cite[Lemma 6.3 and Proposition 6.6]{GS} for details.

It follows from $\left( \text{\ref{Distributions F Int}}\right) $ that the
elements of $\mathcal{D}_{\mathbf{k}}\left( X\right) $ naturally integrates
functions in $\mathcal{A}_{\mathbf{k}}\left( X\right) $. Furthermore they
are endowed with natural specialization maps, not possessed by the spaces $%
\mathcal{D}\left( X,\mathbf{k}\right) $, defined as follows. If we have
given $\mathbf{k}_{i}\in Hom_{\mathcal{L}}\left( \mathcal{D}\left( T\right) ,%
\mathcal{O}_{i}\right) $, we say that $\mathbf{k}_{1}$ specializes via $\phi 
$ to $\mathbf{k}_{2}$, and we write $\mathbf{k}_{1}\overset{\phi }{%
\rightarrow }\mathbf{k}_{2}$, if $\phi \in Hom_{\mathcal{L}}\left( \mathcal{O%
}_{1},\mathcal{O}_{2}\right) $ and $\mathbf{k}_{2}=\phi \circ \mathbf{k}_{1}$%
. Then we have an induced specialization map%
\begin{equation}
\phi _{\ast }:\mathcal{D}_{\mathbf{k}_{1}}\left( X\right) \rightarrow 
\mathcal{D}_{\mathbf{k}_{2}}\left( X\right) \text{ via }\phi _{\ast }\left(
\alpha \widehat{\otimes }_{\mathbf{k}_{1}}\mu \right) :=\phi \left( \alpha
\right) \widehat{\otimes }_{\mathbf{k}_{2}}\mu \text{.}
\label{Distributions F Specialization}
\end{equation}

\subsection{Multiplying locally analytic homogeneous distributions}

Now suppose that we have given two $p$-adic locally compact and paracompact
manifolds $X_{i}$ endowed with analytic actions of $T_{i}$ for $i=1,2$, so
that $T_{1}\times T_{2}$ act on $X_{1}\times X_{2}$ in the obvious way. Let
us be given $\mathbf{k}_{i}\in Hom_{\mathcal{L}}\left( \mathcal{D}\left(
T_{i}\right) ,\mathcal{O}_{i}\right) $. We define the continuous morphism of
locally convex spaces%
\begin{equation}
\mathbf{k}_{1}\boxplus \mathbf{k}_{2}:\mathcal{D}\left( T_{1}\times
T_{2}\right) \overset{\mathbf{P}_{\mathcal{D}\left( T_{1}\right) ,\mathcal{D}%
\left( T_{2}\right) }^{-1}}{\rightarrow }\mathcal{D}\left( T_{1}\right) 
\widehat{\otimes }_{\iota }\mathcal{D}\left( T_{2}\right) \overset{\mathbf{k}%
_{1}\widehat{\otimes }_{\iota }\mathbf{k}_{2}}{\rightarrow }\mathcal{O}_{1}%
\widehat{\otimes }_{\iota }\mathcal{O}_{2}\overset{\widehat{1}}{\rightarrow }%
\mathcal{O}_{1}\widehat{\otimes }\mathcal{O}_{2}\text{.}
\label{Distributions Fmult1}
\end{equation}%
Exploiting the effect on Dirac distributions and noticing that the
multiplications laws are separately continuous by $\left( \text{\ref%
{Distributions D4}}\right) $, it is not difficult to deduce from the density
of Dirac distributions that $\mathbf{k}_{1}\boxplus \mathbf{k}_{2}$ is a
morphism of algebras, hence%
\begin{equation*}
\mathbf{k}_{1}\boxplus \mathbf{k}_{2}\in Hom_{\mathcal{L}}\left( \mathcal{D}%
\left( T_{1}\times T_{2}\right) ,\mathcal{O}_{1}\widehat{\otimes }\mathcal{O}%
_{2}\right) .
\end{equation*}%
We assume that $X_{i}$ is further endowed with a right action by a semigroup 
$\Sigma _{i}$ having the same properties at the $\Sigma $-action considered
above.

\begin{lemma}
\label{Distributions L3}There is a unique morphism of locally convex spaces $%
\mathbf{P}_{\mathcal{D}\left( X_{1}\right) ,\mathcal{D}\left( X_{2}\right)
}^{\mathbf{k}_{1},\mathbf{k}_{2}}$ making the following diagram commutative,
which is $\left( \mathcal{O}_{1}\widehat{\otimes }\mathcal{O}_{2},\Sigma
_{1}\times \Sigma _{2}\right) $-equivariant:%
\begin{equation}
\begin{array}{ccc}
\mathcal{O}_{1}\widehat{\otimes }\mathcal{D}\left( X_{1}\right) \widehat{%
\otimes }_{\iota }\mathcal{O}_{2}\widehat{\otimes }\mathcal{D}\left(
X_{2}\right) & \overset{1_{\mathcal{O}_{1}\widehat{\otimes }\mathcal{O}_{2}}%
\widehat{\otimes }\mathbf{P}_{\mathcal{D}\left( X_{1}\right) ,\mathcal{D}%
\left( X_{2}\right) }}{\rightarrow } & \mathcal{O}_{1}\widehat{\otimes }%
\mathcal{O}_{2}\widehat{\otimes }\mathcal{D}\left( X_{1}\times X_{2}\right)
\\ 
\downarrow &  & \downarrow \\ 
\mathcal{D}_{\mathbf{k}_{1}}\left( X_{1}\right) \widehat{\otimes }_{\iota }%
\mathcal{D}_{\mathbf{k}_{2}}\left( X_{2}\right) & \overset{\mathbf{P}_{%
\mathcal{D}\left( X_{1}\right) ,\mathcal{D}\left( X_{2}\right) }^{\mathbf{k}%
_{1},\mathbf{k}_{2}}}{\rightarrow } & \mathcal{D}_{\mathbf{k}_{1}\boxplus 
\mathbf{k}_{2}}\left( X_{1}\times X_{2}\right) \text{.}%
\end{array}
\label{Distributions L3 Diagram}
\end{equation}
\end{lemma}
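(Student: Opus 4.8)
The plan is to construct $\mathbf{P}_{\mathcal{D}(X_1),\mathcal{D}(X_2)}^{\mathbf{k}_1,\mathbf{k}_2}$ from its effect on (images of) Dirac distributions and to extract existence, uniqueness and equivariance from the universal property of the quotient maps $q_i\colon\mathcal{O}_i\widehat{\otimes}\mathcal{D}(X_i)\twoheadrightarrow\mathcal{D}_{\mathbf{k}_i}(X_i)=\mathcal{O}_i\widehat{\otimes}_{\mathbf{k}_i}\mathcal{D}(X_i)$ out of which the left vertical arrows of \eqref{Distributions L3 Diagram} are built. Write $q=q_1\widehat{\otimes}_{\iota}q_2$ for the left vertical map, $q'$ for the right vertical map, and $\Phi=q'\circ\bigl(1_{\mathcal{O}_1\widehat{\otimes}\mathcal{O}_2}\widehat{\otimes}\mathbf{P}_{\mathcal{D}(X_1),\mathcal{D}(X_2)}\bigr)$ for the clockwise composite in \eqref{Distributions L3 Diagram}. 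Granting that $q$ is a \emph{topological} quotient map (the completed inductive tensor product of the two quotient maps $q_i$; this is where the functional-analytic input enters, see below), the whole lemma reduces to the inclusion $\ker q\subseteq\ker\Phi$: then $\Phi$ factors through $q$ as a continuous map $\mathbf{P}_{\mathcal{D}(X_1),\mathcal{D}(X_2)}^{\mathbf{k}_1,\mathbf{k}_2}$, which is automatically $\mathcal{O}_1\widehat{\otimes}\mathcal{O}_2$-linear and $\Sigma_1\times\Sigma_2$-equivariant because $q$, $q'$ and the top arrow are, and is unique because the image of $q$ is dense.

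For the inclusion I would argue as follows. By density of Dirac distributions together with $\delta_t\cdot\delta_x=\delta_{tx}$ from \eqref{Distributions F2}, $\ker q_i$ is the closed $\mathcal{O}_i$-submodule generated by the elements $\alpha\,\mathbf{k}_i(\delta_t)\widehat{\otimes}\delta_x-\alpha\widehat{\otimes}\delta_{tx}$ ($\alpha\in\mathcal{O}_i$, $t\in T_i$, $x\in X_i$), so $\ker q$ is the closed subspace generated by these relations in the first tensor slot and by the analogous relations in the second slot. Since $\Phi$ is continuous it suffices to see that it kills them. On Diracs the top arrow sends $\alpha\widehat{\otimes}\delta_{x_1}\widehat{\otimes}_{\iota}\beta\widehat{\otimes}\delta_{x_2}$ to $\alpha\widehat{\otimes}\beta\widehat{\otimes}\delta_{(x_1,x_2)}$, using $\mathbf{P}_{\mathcal{D}(X_1),\mathcal{D}(X_2)}(\delta_{x_1}\widehat{\otimes}_{\iota}\delta_{x_2})=\delta_{(x_1,x_2)}$ from \eqref{Distributions F1}; hence, using $\mathbf{k}_1(\delta_t)\widehat{\otimes}1_{\mathcal{O}_2}=(\mathbf{k}_1\boxplus\mathbf{k}_2)(\delta_{(t,1)})$ (immediate from the construction \eqref{Distributions Fmult1} and density of Diracs) and then the relation defining $\mathcal{D}_{\mathbf{k}_1\boxplus\mathbf{k}_2}(X_1\times X_2)$,
\[
\Phi\bigl(\alpha\,\mathbf{k}_1(\delta_t)\widehat{\otimes}\delta_{x_1}\widehat{\otimes}_{\iota}\beta\widehat{\otimes}\delta_{x_2}\bigr)=q'\bigl((\alpha\widehat{\otimes}\beta)\widehat{\otimes}(\delta_{(t,1)}\cdot\delta_{(x_1,x_2)})\bigr)=q'\bigl((\alpha\widehat{\otimes}\beta)\widehat{\otimes}\delta_{(tx_1,x_2)}\bigr),
\]
which equals $\Phi(\alpha\widehat{\otimes}\delta_{tx_1}\widehat{\otimes}_{\iota}\beta\widehat{\otimes}\delta_{x_2})$ by the same computation run the other way; the second slot is symmetric. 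This also pins down $\mathbf{P}_{\mathcal{D}(X_1),\mathcal{D}(X_2)}^{\mathbf{k}_1,\mathbf{k}_2}$ by $(1\widehat{\otimes}_{\mathbf{k}_1}\delta_{x_1})\widehat{\otimes}_{\iota}(1\widehat{\otimes}_{\mathbf{k}_2}\delta_{x_2})\mapsto 1\widehat{\otimes}_{\mathbf{k}_1\boxplus\mathbf{k}_2}\delta_{(x_1,x_2)}$, and $\Sigma_1\times\Sigma_2$-equivariance is then checked on these generators via $f_\ast(\delta_x)=\delta_{f(x)}$ of \eqref{Distributions F0}, applied to the maps induced by $\sigma_1$ and $\sigma_2$, together with density of Diracs.

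The step I expect to be the real obstacle is not this Dirac bookkeeping but the functional analysis behind it: one must know that $q_1,q_2$, and hence $q=q_1\widehat{\otimes}_{\iota}q_2$, are topological quotient maps and that $\ker q$ is exactly the closed span of the relations above — equivalently, that completed inductive tensor products of quotient maps of the spaces at hand ($\mathcal{D}(X_i)$, the Banach algebras $\mathcal{O}_i$, and their completed tensor products) remain quotient maps with the expected kernels. This holds because these spaces are of the restricted type for which such statements are available, and I would cite the relevant results of \cite{GS}; with that in place the factorization of $\Phi$ through $q$, and hence the lemma, is immediate.
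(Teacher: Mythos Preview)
Your proposal is correct and follows essentially the same route as the paper: define the clockwise composite (the paper calls it $B$, you call it $\Phi$), show it respects the relations defining the balanced tensor products $\mathcal{O}_i\widehat{\otimes}_{\mathbf{k}_i}\mathcal{D}(X_i)$ so that it descends through the left vertical quotient, verify the relevant relation on Dirac distributions, and then invoke density of Diracs together with separate continuity of the convolution action to pass to general elements; equivariance is deduced at the end from surjectivity of the left vertical map and equivariance of the other three arrows. The only cosmetic difference is that the paper packages the key check as the single identity $\mathbf{P}_{\mathcal{D}(T_1),\mathcal{D}(T_2)}(\nu_1\widehat{\otimes}\nu_2)\cdot\mathbf{P}_{\mathcal{D}(X_1),\mathcal{D}(X_2)}(\mu_1\widehat{\otimes}\mu_2)=\mathbf{P}_{\mathcal{D}(X_1),\mathcal{D}(X_2)}((\nu_1\cdot\mu_1)\widehat{\otimes}(\nu_2\cdot\mu_2))$ for general $\nu_i,\mu_i$ (reduced to Diracs by separate continuity), whereas you first reduce the kernel to its Dirac generators and then check $\Phi$ kills those---the same density argument, applied at a different stage.
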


\begin{proof}
Let $B$ be the composition of $1_{\mathcal{O}_{1}\widehat{\otimes }\mathcal{O%
}_{2}}\widehat{\otimes }\mathbf{P}_{\mathcal{D}\left( X_{1}\right) ,\mathcal{%
D}\left( X_{2}\right) }$ with the right vertical morphism. Since we know
that $B$ is continuous and $\mathcal{D}_{\mathbf{k}_{1}\boxplus \mathbf{k}%
_{2}}\left( X_{1}\times X_{2}\right) $ is Hausdorff and complete, we first
need to show that, for every $\alpha _{i}\in \mathcal{O}_{i}$, $\mu _{i}\in 
\mathcal{D}\left( X_{i}\right) $ and $\nu _{i}\in \mathcal{D}\left(
T_{i}\right) $%
\begin{equation*}
B\left( \alpha _{1}\mathbf{k}_{1}\left( \nu _{1}\right) \widehat{\otimes }%
\mu _{1}\widehat{\otimes }_{\iota }\alpha _{2}\mathbf{k}_{1}\left( \nu
_{2}\right) \widehat{\otimes }\mu _{2}\right) =B\left( \alpha _{1}\widehat{%
\otimes }\left( \nu _{1}\cdot \mu _{1}\right) \widehat{\otimes }_{\iota
}\alpha _{2}\widehat{\otimes }\left( \nu _{2}\cdot \mu _{2}\right) \right) 
\text{.}
\end{equation*}%
It turns out that this is equivalent to checking the equalities%
\begin{equation}
\mathbf{P}_{\mathcal{D}\left( T_{1}\right) ,\mathcal{D}\left( T_{2}\right)
}\left( \nu _{1}\widehat{\otimes }\nu _{2}\right) \cdot \mathbf{P}_{\mathcal{%
D}\left( X_{1}\right) ,\mathcal{D}\left( X_{2}\right) }\left( \mu _{1}%
\widehat{\otimes }\mu _{2}\right) =\mathbf{P}_{\mathcal{D}\left(
X_{1}\right) ,\mathcal{D}\left( X_{2}\right) }\left( \left( \nu _{1}\cdot
\mu _{1}\right) \widehat{\otimes }\left( \nu _{2}\cdot \mu _{2}\right)
\right) \text{ in }\mathcal{D}\left( X_{1}\times X_{2}\right) \text{.}
\label{Distributions L3 Claim}
\end{equation}%
When $\mu _{i}=\delta _{x_{i}}$ and $\nu _{i}=\delta _{t_{i}}$ we have
indeed, by $\left( \text{\ref{Distributions F1}}\right) $ and $\left( \text{%
\ref{Distributions F2}}\right) $%
\begin{eqnarray*}
&&\mathbf{P}_{\mathcal{D}\left( T_{1}\right) ,\mathcal{D}\left( T_{2}\right)
}\left( \nu _{1}\widehat{\otimes }\nu _{2}\right) \cdot \mathbf{P}_{\mathcal{%
D}\left( X_{1}\right) ,\mathcal{D}\left( X_{2}\right) }\left( \mu _{1}%
\widehat{\otimes }\mu _{2}\right) =\delta _{\left( t_{1},t_{2}\right) }\cdot
\delta _{\left( x_{1},x_{2}\right) }=\delta _{\left(
t_{1}x_{1},t_{2}x_{2}\right) }\text{,} \\
&&\mathbf{P}_{\mathcal{D}\left( X_{1}\right) ,\mathcal{D}\left( X_{2}\right)
}\left( \left( \nu _{1}\cdot \mu _{1}\right) \widehat{\otimes }\left( \nu
_{2}\cdot \mu _{2}\right) \right) =\mathbf{P}_{\mathcal{D}\left(
X_{1}\right) ,\mathcal{D}\left( X_{2}\right) }\left( \delta _{t_{1}x_{1}}%
\widehat{\otimes }\delta _{t_{2}x_{2}}\right) =\delta _{\left(
t_{1}x_{1},t_{2}x_{2}\right) }\text{.}
\end{eqnarray*}%
We note that both the left and the right hand sides of $\left( \text{\ref%
{Distributions L3 Claim}}\right) $ are linear in the variables $\mu _{i}$
and $\nu _{i}$. Furthermore, if we fix three of these variables, the two
resulting functions are continuous in the remaining variable thanks to $%
\left( \text{\ref{Distributions D4}}\right) $ showing that the
multiplication laws are separately continuous. Hence the claimed equality $%
\left( \text{\ref{Distributions L3 Claim}}\right) $ follows from the density
of Dirac distributions. The existence and uniqueness of $\mathbf{P}_{%
\mathcal{D}\left( X_{1}\right) ,\mathcal{D}\left( X_{2}\right) }^{\mathbf{k}%
_{1},\mathbf{k}_{2}}$ follows and, since $\mathcal{O}_{1}\widehat{\otimes }%
\mathcal{D}\left( X_{1}\right) \widehat{\otimes }_{\iota }\mathcal{O}_{2}%
\widehat{\otimes }\mathcal{D}\left( X_{2}\right) \rightarrow \mathcal{D}_{%
\mathbf{k}_{1}}\left( X_{1}\right) \widehat{\otimes }_{\iota }\mathcal{D}_{%
\mathbf{k}_{1}}\left( X_{2}\right) $ is surjective and all the arrows other
than $\mathbf{P}_{\mathcal{D}\left( X_{1}\right) ,\mathcal{D}\left(
X_{2}\right) }^{\mathbf{k}_{1},\mathbf{k}_{2}}$ in $\left( \text{\ref%
{Distributions L3 Diagram}}\right) $ are $\left( \mathcal{O}_{1}\widehat{%
\otimes }\mathcal{O}_{2},\Sigma _{1}\times \Sigma _{2}\right) $-equivariant
(by $\left( \text{\ref{Distributions F1}}\right) $), $\mathbf{P}_{\mathcal{D}%
\left( X_{1}\right) ,\mathcal{D}\left( X_{2}\right) }^{\mathbf{k}_{1},%
\mathbf{k}_{2}}$ is equivariant as well.
\end{proof}

\bigskip

In particular we may define%
\begin{equation*}
\overline{\mathbf{P}}_{\mathcal{D}\left( X_{1}\right) ,\mathcal{D}\left(
X_{2}\right) }^{\mathbf{k}_{1},\mathbf{k}_{2}}:\mathcal{D}_{\mathbf{k}%
_{1}}\left( X_{1}\right) \widehat{\otimes }_{\iota }\mathcal{D}_{\mathbf{k}%
_{1}}\left( X_{2}\right) \overset{\mathbf{P}_{\mathcal{D}\left( X_{1}\right)
,\mathcal{D}\left( X_{2}\right) }^{\mathbf{k}_{1},\mathbf{k}_{2}}}{%
\rightarrow }\mathcal{D}_{\mathbf{k}_{1}\boxplus \mathbf{k}_{2}}\left(
X_{1}\times X_{2}\right) \overset{\mathbf{T}_{\mathcal{D}\left( X_{1}\times
X_{2}\right) }^{\mathbf{k}_{1}\boxplus \mathbf{k}_{2}}}{\rightarrow }%
\mathcal{D}\left( X_{1}\times X_{2},\mathbf{k}_{1}\boxplus \mathbf{k}%
_{2}\right) \text{.}
\end{equation*}%
If $\mu _{i}\in \mathcal{D}_{\mathbf{k}_{i}}\left( X_{i}\right) $ for $i=1,2$%
, we set%
\begin{equation*}
\mu _{1}\boxtimes \mu _{2}:=\overline{\mathbf{P}}_{\mathcal{D}\left(
X_{1}\right) ,\mathcal{D}\left( X_{2}\right) }^{\mathbf{k}_{1},\mathbf{k}%
_{2}}\left( \mu _{1}\widehat{\otimes }_{\iota }\mu _{2}\right) \in \mathcal{D%
}\left( X_{1}\times X_{2},\mathbf{k}_{1}\boxplus \mathbf{k}_{2}\right) \text{%
.}
\end{equation*}%
Of course, the formation of $\mathbf{k}_{1}\boxplus \mathbf{k}_{2}$, $%
\mathbf{P}_{\mathcal{D}\left( X_{1}\right) ,\mathcal{D}\left( X_{2}\right)
}^{\mathbf{k}_{1},\mathbf{k}_{2}}$ and $\overline{\mathbf{P}}_{\mathcal{D}%
\left( X_{1}\right) ,\mathcal{D}\left( X_{2}\right) }^{\mathbf{k}_{1},%
\mathbf{k}_{2}}$ extends to a finite number of indices and the usual
associativity constraints are satisfied, as well as the compatibility with
the commutativity constraints in the sources and the targets of these maps.
We finally remark that the equations%
\begin{eqnarray}
\mathbf{P}_{\mathcal{D}\left( X_{1}\right) ,\mathcal{D}\left( X_{2}\right)
}^{\mathbf{k}_{1},\mathbf{k}_{2}}\left( 1_{\mathcal{O}_{1}}\widehat{\otimes }%
_{\mathbf{k}_{1}}\delta _{x_{1}}\widehat{\otimes }_{\iota }1_{\mathcal{O}%
_{2}}\widehat{\otimes }_{\mathbf{k}_{2}}\delta _{x_{2}}\right) &=&1_{%
\mathcal{O}_{1}\widehat{\otimes }\mathcal{O}_{2}}\widehat{\otimes }_{\mathbf{%
k}_{1}\boxplus \mathbf{k}_{2}}\delta _{\left( x_{1},x_{2}\right) }\text{,} 
\notag \\
\overline{\mathbf{P}}_{\mathcal{D}\left( X_{1}\right) ,\mathcal{D}\left(
X_{2}\right) }^{\mathbf{k}_{1},\mathbf{k}_{2}}\left( 1_{\mathcal{O}_{1}}%
\widehat{\otimes }_{\mathbf{k}_{1}}\delta _{x_{1}}\widehat{\otimes }_{\iota
}1_{\mathcal{O}_{2}}\widehat{\otimes }_{\mathbf{k}_{2}}\delta
_{x_{2}}\right) &=&\delta _{\left( x_{1},x_{2}\right) }^{\mathbf{k}%
_{1}\boxplus \mathbf{k}_{2}}  \label{Distributions F3}
\end{eqnarray}%
characterize these maps.

\subsection{\label{SS Algebraic operations on weights}Algebraic operations
on weights}

Setting $\mathcal{X}_{T}\left( \mathcal{O}\right) :=Hom_{\mathcal{A}}\left(
T,\mathcal{O}^{\times }\right) $ defines a group functor on Banach algebras,
so that we have%
\begin{equation*}
+:\mathcal{X}_{T}\left( \mathcal{O}\right) \times \mathcal{X}_{T}\left( 
\mathcal{O}\right) \rightarrow \mathcal{X}_{T}\left( \mathcal{O}\right) 
\text{ and }-:\mathcal{X}_{T}\left( \mathcal{O}\right) \rightarrow \mathcal{X%
}_{T}\left( \mathcal{O}\right) \text{.}
\end{equation*}%
It follows from $\left( \text{\ref{Distributions F Weight Id}}\right) $ that
we may transport these operations getting%
\begin{equation*}
+:Hom_{\mathcal{L}}\left( \mathcal{D}\left( T\right) ,\mathcal{O}_{i}\right)
\times Hom_{\mathcal{L}}\left( \mathcal{D}\left( T\right) ,\mathcal{O}%
_{i}\right) \rightarrow Hom_{\mathcal{L}}\left( \mathcal{D}\left( T\right) ,%
\mathcal{O}_{i}\right)
\end{equation*}%
and%
\begin{equation*}
-:Hom_{\mathcal{L}}\left( \mathcal{D}\left( T\right) ,\mathcal{O}_{i}\right)
\rightarrow Hom_{\mathcal{L}}\left( \mathcal{D}\left( T\right) ,\mathcal{O}%
_{i}\right) \text{.}
\end{equation*}%
Our next task it to interpolate these operations.

If we have given $\mathbf{k}_{i}\in Hom_{\mathcal{L}}\left( \mathcal{D}%
\left( T\right) ,\mathcal{O}_{i}\right) $, then we define%
\begin{equation*}
\mathbf{k}_{1}\oplus \mathbf{k}_{2}:\mathcal{D}\left( T\right) \overset{%
\Delta _{\ast }}{\rightarrow }\mathcal{D}\left( T\times T\right) \overset{%
\mathbf{k}_{1}\boxplus \mathbf{k}_{2}}{\rightarrow }\mathcal{O}_{1}\widehat{%
\otimes }\mathcal{O}_{2}\text{,}
\end{equation*}%
where $\Delta :T\rightarrow T\times T$ is the diagonal map and $\mathbf{k}%
_{1}\boxplus \mathbf{k}_{2}$ is given by $\left( \text{\ref{Distributions
Fmult1}}\right) $.

If $\mathbf{k}\in Hom_{\mathcal{L}}\left( \mathcal{D}\left( T\right) ,%
\mathcal{O}\right) $, then we define%
\begin{equation*}
\ominus \mathbf{k:}\mathcal{D}\left( T\right) \overset{i_{\ast }}{%
\rightarrow }\mathcal{D}\left( T\right) \overset{\mathbf{k}}{\rightarrow }%
\mathcal{O}\text{,}
\end{equation*}%
where $i:T\rightarrow T$ is the inversion, and set%
\begin{equation*}
\mathbf{k}_{1}\ominus \mathbf{k}_{2}:=\mathbf{k}_{1}\oplus \left( \ominus 
\mathbf{k}_{2}\right)
\end{equation*}%
We note that these operations are obviously functorial and compatible with
specialization.

Exploiting the definitions and making $\left( \text{\ref{Distributions
Fmult1}}\right) $ explicit it is easy to check the following result.

\begin{lemma}
\label{Distributions L WeightsOp}Suppose that $\mathbf{k},\mathbf{k}_{i}\in
Hom_{\mathcal{L}}\left( \mathcal{D}\left( T\right) ,\mathcal{O}\right) $ and
write%
\begin{equation*}
m_{\mathcal{O}}:\mathcal{O}\widehat{\otimes }\mathcal{O}\rightarrow \mathcal{%
O}
\end{equation*}%
for the multiplication map. Then $-\mathbf{k=}\ominus \mathbf{k}$ and%
\begin{equation*}
\mathbf{k}_{1}+\mathbf{k}_{2}:\mathcal{D}\left( T\right) \overset{\mathbf{k}%
_{1}\oplus \mathbf{k}_{2}}{\rightarrow }\mathcal{O}\widehat{\otimes }%
\mathcal{O}\overset{m_{\mathcal{O}}}{\rightarrow }\mathcal{O}\text{.}
\end{equation*}
\end{lemma}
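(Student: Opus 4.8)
The plan is to reduce both identities to a computation on Dirac distributions. Each identity asserts the equality of two continuous linear maps out of $\mathcal{D}\left( T\right) $, and since the $\mathcal{O}$-linear span of $\left\{ \delta _{t}:t\in T\right\} $ is dense in $\mathcal{D}\left( T\right) $ (``density of Dirac distributions''), it is enough to verify the equalities after evaluation at each $\delta _{t}$, $t\in T$.

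For the negation: by $\left( \ref{Distributions F0}\right) $ applied to the inversion $i\colon T\rightarrow T$ we have $i_{\ast }\left( \delta _{t}\right) =\delta _{t^{-1}}$, hence $\left( \ominus \mathbf{k}\right) \left( \delta _{t}\right) =\mathbf{k}\left( \delta _{t^{-1}}\right) $; on the other hand, by definition of the transported inversion on $\mathcal{X}_{T}\left( \mathcal{O}\right) $ and of $C^{\mathcal{O}}$ in $\left( \ref{Distributions F Weight Id}\right) $, $\left( -\mathbf{k}\right) \left( \delta _{t}\right) =C^{\mathcal{O}}\left( -\mathbf{k}\right) \left( t\right) =\bigl(C^{\mathcal{O}}\left( \mathbf{k}\right) \left( t\right) \bigr)^{-1}=\mathbf{k}\left( \delta _{t}\right) ^{-1}$. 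These agree: since $C^{\mathcal{O}}\left( \mathbf{k}\right) $ is a group homomorphism, $\mathbf{k}\left( \delta _{e}\right) =C^{\mathcal{O}}\left( \mathbf{k}\right) \left( e\right) =1$, so by $\left( \ref{Distributions F2}\right) $ and the fact that $\mathbf{k}$ is a morphism of algebras, $\mathbf{k}\left( \delta _{t}\right) \,\mathbf{k}\left( \delta _{t^{-1}}\right) =\mathbf{k}\left( \delta _{t}\cdot \delta _{t^{-1}}\right) =\mathbf{k}\left( \delta _{e}\right) =1$, i.e. $\mathbf{k}\left( \delta _{t^{-1}}\right) =\mathbf{k}\left( \delta _{t}\right) ^{-1}$. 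Thus $\ominus \mathbf{k}=-\mathbf{k}$.

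For the sum: unwinding the definition of $\oplus $ and using $\left( \ref{Distributions F0}\right) $ for the diagonal $\Delta \colon T\rightarrow T\times T$ gives $\Delta _{\ast }\left( \delta _{t}\right) =\delta _{\left( t,t\right) }$, so $\left( \mathbf{k}_{1}\oplus \mathbf{k}_{2}\right) \left( \delta _{t}\right) =\left( \mathbf{k}_{1}\boxplus \mathbf{k}_{2}\right) \left( \delta _{\left( t,t\right) }\right) $. Making $\left( \ref{Distributions Fmult1}\right) $ explicit and using $\left( \ref{Distributions F1}\right) $: $\mathbf{P}_{\mathcal{D}\left( T\right) ,\mathcal{D}\left( T\right) }^{-1}\left( \delta _{\left( t,t\right) }\right) =\delta _{t}\widehat{\otimes }_{\iota }\delta _{t}$, which $\mathbf{k}_{1}\widehat{\otimes }_{\iota }\mathbf{k}_{2}$ sends to $\mathbf{k}_{1}\left( \delta _{t}\right) \widehat{\otimes }_{\iota }\mathbf{k}_{2}\left( \delta _{t}\right) $, hence $\left( \mathbf{k}_{1}\boxplus \mathbf{k}_{2}\right) \left( \delta _{\left( t,t\right) }\right) =\mathbf{k}_{1}\left( \delta _{t}\right) \widehat{\otimes }\mathbf{k}_{2}\left( \delta _{t}\right) \in \mathcal{O}\widehat{\otimes }\mathcal{O}$; applying $m_{\mathcal{O}}$ yields $\mathbf{k}_{1}\left( \delta _{t}\right) \,\mathbf{k}_{2}\left( \delta _{t}\right) $. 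On the other hand, by definition of the transported addition (pointwise multiplication of characters) and of $C^{\mathcal{O}}$, $\left( \mathbf{k}_{1}+\mathbf{k}_{2}\right) \left( \delta _{t}\right) =C^{\mathcal{O}}\left( \mathbf{k}_{1}+\mathbf{k}_{2}\right) \left( t\right) =C^{\mathcal{O}}\left( \mathbf{k}_{1}\right) \left( t\right) \,C^{\mathcal{O}}\left( \mathbf{k}_{2}\right) \left( t\right) =\mathbf{k}_{1}\left( \delta _{t}\right) \,\mathbf{k}_{2}\left( \delta _{t}\right) $. So $m_{\mathcal{O}}\circ \left( \mathbf{k}_{1}\oplus \mathbf{k}_{2}\right) $ and $\mathbf{k}_{1}+\mathbf{k}_{2}$ agree on every $\delta _{t}$, hence coincide.

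The only point needing care is the legitimacy of the density argument, i.e. that each of the four maps compared is a continuous linear map $\mathcal{D}\left( T\right) \rightarrow \mathcal{O}$: for $-\mathbf{k}$ and $\mathbf{k}_{1}+\mathbf{k}_{2}$ this is built into their definition as elements of $\Hom _{\mathcal{L}}\left( \mathcal{D}\left( T\right) ,\mathcal{O}\right) $ transported along $C^{\mathcal{O}}$, while for $\ominus \mathbf{k}$ and $m_{\mathcal{O}}\circ \left( \mathbf{k}_{1}\oplus \mathbf{k}_{2}\right) $ it follows from functoriality of $\mathcal{D}\left( -\right) $, continuity of $m_{\mathcal{O}}$, and the fact recorded just after $\left( \ref{Distributions Fmult1}\right) $ that $\mathbf{k}_{1}\boxplus \mathbf{k}_{2}$ is a morphism of algebras (commutativity of $\mathcal{O}$ moreover makes all these composites morphisms of algebras, so all four maps lie in $\Hom _{\mathcal{L}}$). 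I expect no genuine obstacle here; the content of the lemma is entirely the bookkeeping of the identifications $\left( \ref{Distributions F Weight Id}\right) $, $\left( \ref{Distributions F0}\right) $, $\left( \ref{Distributions F1}\right) $, $\left( \ref{Distributions F2}\right) $ and $\left( \ref{Distributions Fmult1}\right) $.
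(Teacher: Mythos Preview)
Your proof is correct and is precisely the computation the paper has in mind when it writes ``Exploiting the definitions and making $\left( \text{\ref{Distributions Fmult1}}\right) $ explicit it is easy to check the following result.'' The paper gives no further detail, so your argument---checking both identities on Dirac distributions via $\left( \text{\ref{Distributions F0}}\right) $, $\left( \text{\ref{Distributions F1}}\right) $, $\left( \text{\ref{Distributions F2}}\right) $ and then invoking density---is exactly the intended verification, carried out in full.
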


\bigskip

We now illustrate why $\mathbf{k}_{1}\oplus \mathbf{k}_{2}$ interpolates the 
$+$ operation. Suppose that $F$ is our $p$-adic working field and that $%
\mathbf{k}_{i}\in Hom_{\mathcal{L}}\left( \mathcal{D}\left( T\right)
,F\right) $ are such that $\mathbf{k}_{i}\overset{\phi _{i}}{\rightarrow }%
k_{i}$. Then%
\begin{equation*}
\mathbf{k}_{1}\oplus \mathbf{k}_{2}\overset{\phi _{1}\widehat{\otimes }\phi
_{2}}{\rightarrow }k_{1}\oplus k_{2}
\end{equation*}%
by the compatibility of the $\oplus $-operation with specializations. But we
have $F\widehat{\otimes }F=F$ canonically and the identification is given by 
$m_{F}$. Hence $\mathbf{k}_{1}\oplus \mathbf{k}_{2}$ specializes via $\phi
_{1}\widehat{\otimes }\phi _{2}$ to $k_{1}+k_{2}$, thanks to Lemma \ref%
{Distributions L WeightsOp}. In particular, suppose that $\mathcal{X}_{T}$
is representable by a rigid analytic space (for example because $T$ is
compact) and that $\mathbf{k}_{i}\overset{\phi _{i}}{\rightarrow }k_{i}$
corresponds to $k_{i}\in U_{i}$, with $U_{i}\subset \mathcal{X}_{T}$ an
affinoid neighbourhood of $k$. Then $\mathbf{k}_{1}\oplus \mathbf{k}_{2}$
corresponds to%
\begin{equation*}
U_{1}\times U_{2}\subset \mathcal{X}_{T}\times \mathcal{X}_{T}\overset{+}{%
\rightarrow }\mathcal{X}_{T}\text{.}
\end{equation*}%
We finally remark that, as a consequence of the associativity of the
operation in $T$, we have%
\begin{equation*}
\left( \mathbf{k}_{1}\oplus \mathbf{k}_{2}\right) \oplus \mathbf{k}%
_{3}\simeq \mathbf{k}_{1}\oplus \left( \mathbf{k}_{2}\oplus \mathbf{k}%
_{3}\right)
\end{equation*}%
up to%
\begin{equation*}
\left( \mathcal{O}_{1}\otimes \mathcal{O}_{2}\right) \otimes \mathcal{O}%
_{3}\simeq \mathcal{O}_{1}\otimes \left( \mathcal{O}_{2}\otimes \mathcal{O}%
_{3}\right) \text{.}
\end{equation*}%
A similar compatibility holds true for the commutativity, when $T$ is
commutative as in our applications.

\bigskip

Suppose now $T\simeq \Delta \times \left( 1+p\mathbb{Z}_{p}\right) ^{r}$,
where $\Delta $ is the torsion part of $T$, and consider the multiplication
by $2$ map $t\mapsto t^{2}$ (we write $T$ multiplicatively). We say that $%
\mathbf{k}\in \mathcal{X}_{T}\left( \mathcal{O}\right) $ is even if it is in
the image of $2^{\ast }:\mathcal{X}_{T}\left( \mathcal{O}\right) \rightarrow 
\mathcal{X}_{T}\left( \mathcal{O}\right) $ and set $\frac{\mathbf{k}}{2}$
for an element in the inverse image of $\mathbf{k}$. For example, suppose
that $p\neq 2$ and $T=\mathbb{Z}_{p}^{\times }\simeq \mathbb{F}_{p}^{\times
}\times \left( 1+p\mathbb{Z}_{p}\right) $. We can decompose every $\mathbf{k}%
\in \mathcal{X}_{T}\left( \mathcal{O}\right) $ in the form $\mathbf{k}%
=\left( \left[ \mathbf{k}\right] ,\left\langle \mathbf{k}\right\rangle
\right) $, where $\left[ \mathbf{k}\right] \in \mathbb{F}_{p}^{\times }$ and 
$\left\langle \mathbf{k}\right\rangle \in \mathcal{X}_{1+p\mathbb{Z}%
_{p}}\left( \mathcal{O}\right) $. Since $t\mapsto t^{2}$ is invertible on $%
1+p\mathbb{Z}_{p}$, $\mathbf{k}=\left( \left[ \mathbf{k}\right]
,\left\langle \mathbf{k}\right\rangle \right) $ is even if and only if $%
\left[ \mathbf{k}\right] \in \mathbb{F}_{p}^{\times 2}$ and then $\frac{%
\mathbf{k}}{2}\in \left\{ \left( \frac{\left[ \mathbf{k}\right] }{2},\frac{%
\left\langle \mathbf{k}\right\rangle }{2}\right) ,\left( -\frac{\left[ 
\mathbf{k}\right] }{2},\frac{\left\langle \mathbf{k}\right\rangle }{2}%
\right) \right\} $; if $\left[ \mathbf{k}\right] =\left[ k_{0}\right] $ for
some integer $k_{0}$ our convention is to choose $\frac{\mathbf{k}}{2}%
=\left( \frac{\left[ k_{0}\right] }{2},\frac{\left\langle \mathbf{k}%
\right\rangle }{2}\right) $. Then $\mathbf{k}\overset{\phi }{\rightarrow }%
k\in \mathbb{N}$ implies $k\in 2\mathbb{N}$ and $\frac{\mathbf{k}}{2}\overset%
{\phi }{\rightarrow }\frac{k}{2}$. The elements of $Hom_{\mathcal{L}}\left( 
\mathcal{D}\left( T\right) ,\mathcal{O}\right) \simeq \mathcal{X}_{T}\left( 
\mathcal{O}\right) $ are called $\mathcal{O}$-weights; we will freely
identify $\mathbf{k}_{1}\boxplus \mathbf{k}_{2}\simeq \left( \mathbf{k}_{1},%
\mathbf{k}_{2}\right) $.

\section{The $p$-adic trilinear form}

The semigroup $\Sigma _{0}\left( p\mathbb{Z}_{p}\right) \subset \mathbf{M}%
_{2}\left( \mathbb{Z}_{p}\right) $ acts from the right on the set $W:=%
\mathbb{Z}_{p}^{\times }\times \mathbb{Z}_{p}$. Setting $\omega _{p}:=\left( 
\begin{array}{cc}
0 & -1 \\ 
p & 0%
\end{array}%
\right) $, we have $\widehat{W}:=W\omega _{p}=p\mathbb{Z}_{p}\times \mathbb{Z%
}_{p}^{\times }$, on which $\Sigma _{0}\left( p\mathbb{Z}_{p}\right) ^{\iota
}=\omega _{p}^{-1}\Sigma _{0}\left( p\mathbb{Z}_{p}\right) \omega _{p}$ acts
from the right. Hence, for a $\mathcal{O}$-weight $\mathbf{k}$, we may form
the right $\Sigma _{0}\left( p\mathbb{Z}_{p}\right) $-module (resp. $\Sigma
_{0}\left( p\mathbb{Z}_{p}\right) ^{\iota }$-module) $\mathcal{D}_{\mathbf{k}%
}\left( W\right) $ (resp. $\mathcal{D}_{\mathbf{k}}\left( \widehat{W}\right) 
$). We take $K_{p}^{\diamond }=\Gamma _{0}\left( p\mathbb{Z}_{p}\right)
:=\Sigma _{0}\left( p\mathbb{Z}_{p}\right) \cap \mathbf{GL}_{2}\left( 
\mathbb{Z}_{p}\right) $. Then we may form the spaces of $p$-adic families of
modular forms on $\mathbf{B}^{\times }$:%
\begin{equation*}
M_{p}\left( \mathcal{D}_{\mathbf{k}}\left( W\right) ,\omega _{0,p}\right)
:=M_{p}\left( B_{f}^{\times },\mathcal{D}_{\mathbf{k}}\left( W\right)
,\omega _{0,p}\right) \text{.}
\end{equation*}%
Recall we work over a $p$-adic field $F$ and consider Banach $F$-algebras:
we set $M_{p}\left( \mathbf{V}_{k,F},\omega _{0,p}\right) :=M_{p}\left(
B_{f}^{\times },\mathbf{V}_{k,F},\omega _{0,p}\right) $ and $M\left( \mathbf{%
V}_{k,F},\omega _{0}\right) :=M\left( B_{f}^{\times },\mathbf{V}%
_{k,F},\omega _{0}\right) $ and, in general, we remove the group from the
notation for $p$-adic forms when it will be clear.

\begin{example}
\label{Example Nebetype1}Fix an identification $\mathbf{B}\left( \mathbb{A}%
_{f}^{\mathrm{Disc}\left( B\right) }\right) \simeq \mathbf{M}_{2}\left( 
\mathbb{A}_{f}^{\mathrm{Disc}\left( B\right) }\right) $ and, for an integer $%
N$ such that $\left( N,\mathrm{Disc}\left( B\right) \right) =1$, write $%
K_{0}^{\mathrm{Disc}\left( B\right) }\left( N\right) \subset \mathbf{B}%
^{\times }\left( \mathbb{A}_{f}^{\mathrm{Disc}\left( B\right) }\right) $
(resp. $K_{1}^{\mathrm{Disc}\left( B\right) }\left( N\right) \subset K_{0}^{%
\mathrm{Disc}\left( B\right) }\left( N\right) $) for the subgroup which
corresponds to matrices with integral coefficients having lower left entry $%
c\equiv 0$ $\mathrm{mod}\left( N\right) $ (resp. upper left entry $a=1$).
Setting $\mathcal{O}_{\mathrm{Disc}\left( B\right) }:=\tprod\nolimits_{l\mid 
\mathrm{Disc}\left( B\right) }\mathcal{O}_{B_{v}}^{\times }$ we can define%
\begin{equation*}
K_{0}\left( N\right) :=K_{0}^{\mathrm{Disc}\left( B\right) }\left( N\right)
\times \mathcal{O}_{\mathrm{Disc}\left( B\right) }^{\times }\text{ and }%
K_{0}\left( N\right) :=K_{0}^{\mathrm{Disc}\left( B\right) }\left( N\right)
\times \mathcal{O}_{\mathrm{Disc}\left( B\right) }^{\times }\text{.}
\end{equation*}%
Assuming that $\mu _{\varphi \left( N\right) }\subset F$, we can decompose%
\begin{equation*}
M_{p}\left( \mathcal{D}_{\mathbf{k}}\left( W\right) \right) ^{K_{1}\left(
N\right) }=\tbigoplus\nolimits_{\epsilon :\left( \frac{\mathbb{Z}}{N\mathbb{Z%
}}\right) ^{\times }\rightarrow \mathcal{O}^{\times }}M_{p}\left( \mathcal{D}%
_{\mathbf{k}}\left( W\right) \right) ^{K_{1}\left( N\right) }\left( \epsilon
\right) \text{,}
\end{equation*}%
where $M\left( \epsilon \right) $ is the submodule of elements $x\in M$ such
that $xu=\epsilon \left( u\right) x$ if we define $\epsilon \left( u\right)
:=\epsilon \left( a_{u}\right) $ for $a_{u}$ the upper left entry of $u\in
K_{0}\left( N\right) $. Setting $\omega _{0,p}^{\epsilon ,\mathbf{k}}\left(
z\right) :=\epsilon \left( \frac{z}{\mathrm{N}_{f}\left( z\right) }\right)
\left( \frac{z}{\mathrm{N}_{f}\left( z\right) }\right) _{p}^{-\mathbf{k}}$,
we have%
\begin{equation*}
M_{p}\left( \mathcal{D}_{\mathbf{k}}\left( W\right) \right) ^{K_{1}\left(
N\right) }\left( \epsilon \right) =M_{p}\left( \mathcal{D}_{\mathbf{k}%
}\left( W\right) ,\omega _{0,p}^{\epsilon ,\mathbf{k}}\right) ^{K_{1}\left(
N\right) }\subset M_{p}\left( \mathcal{D}_{\mathbf{k}}\left( W\right)
,\omega _{0,p}^{\epsilon ,\mathbf{k}}\right) \text{.}
\end{equation*}
\end{example}

\bigskip

If $\mathbf{k}\overset{\phi }{\rightarrow }k\in \mathbb{N}$, there is a
specialization map%
\begin{equation*}
\phi _{\ast }^{\mathrm{al}\text{\textrm{g}}}:M_{p}\left( \mathcal{D}_{%
\mathbf{k}}\left( W\right) \right) \overset{\phi _{\ast }}{\rightarrow }%
M_{p}\left( \mathcal{D}_{k}\left( W\right) \right) \overset{\nu _{k}}{%
\rightarrow }M_{p}\left( \mathbf{V}_{k,F}\right)
\end{equation*}%
where the first arrow is induced by $\left( \text{\ref{Distributions F
Specialization}}\right) $ and the second is the restriction to polynomials
map. We will usually write $\varphi _{k}:=\phi _{\ast }^{\mathrm{al}\text{%
\textrm{g}}}\left( \varphi \right) $ when $\varphi \in M_{p}\left( \mathcal{D%
}_{\mathbf{k}}\left( W\right) \right) $.

\begin{example}
\label{Example Nebetype2}Suppose that we are in the setting of Example \ref%
{Example Nebetype1}, so that $\mu _{\varphi \left( N\right) }\subset F$.
Then we can decompose $M\left( \mathbf{V}_{k,F}\right) ^{K_{1}\left(
N\right) }$ as we did for $p$-adic forms. Setting $\omega _{0}^{\epsilon
,k}\left( z\right) :=\epsilon \left( \frac{z}{\mathrm{N}_{f}\left( z\right) }%
\right) \mathrm{N}_{f}^{k}\left( z\right) $, we have%
\begin{equation*}
M\left( \mathbf{V}_{k,F}\right) ^{K_{1}\left( N\right) }\left( \epsilon
\right) =M\left( \mathbf{V}_{k,F},\omega _{0}^{\epsilon ,k}\right)
^{K_{1}\left( N\right) }\subset M\left( \mathbf{V}_{k,F},\omega
_{0}^{\epsilon ,k}\right) \text{.}
\end{equation*}%
Furthermore, when $\epsilon :\left( \frac{\mathbb{Z}}{N\mathbb{Z}}\right)
^{\times }\rightarrow \mu _{\varphi \left( N\right) }\subset F^{\times }$,
the specialization map induces%
\begin{equation*}
\phi _{\ast }^{\mathrm{al}\text{\textrm{g}}}:M_{p}\left( \mathcal{D}_{%
\mathbf{k}}\left( W\right) ,\omega _{0,p}^{\epsilon ,\mathbf{k}}\right)
\rightarrow M_{p}\left( \mathbf{V}_{k,F},\omega _{0,p}^{\epsilon ,k}\right)
\simeq M\left( \mathbf{V}_{k,F},\omega _{0}^{\epsilon ,k}\right) \text{.}
\end{equation*}
\end{example}

\bigskip

We identify $\psi :\mathbb{Q}_{p}^{2}\times \mathbb{Q}_{p}^{2}\overset{\sim }%
{\rightarrow }\mathbf{M}_{2}\left( \mathbb{Q}_{p}\right) $ by the rule $\psi
\left( x_{1},y_{1},x_{2},y_{2}\right) :=\left( 
\begin{array}{cc}
x_{1} & y_{1} \\ 
x_{2} & y_{2}%
\end{array}%
\right) $ and, for a subset $S\subset \mathbb{Q}_{p}^{2}\times \mathbb{Q}%
_{p}^{2}$, we define%
\begin{equation*}
\delta _{S}:S\rightarrow \mathbb{Q}_{p}\text{, }\delta _{S}\left( s\right) :=%
\mathrm{det}\left( \psi \left( s\right) \right) \text{ and }S_{n}:=\delta
_{S}^{-1}\left( p^{n}\mathbb{Z}_{p}^{\times }\right) \text{.}
\end{equation*}%
For a continuous\ group homomorphism $\mathbf{k}:\mathbb{Z}_{p}^{\times
}\rightarrow \mathcal{O}^{\times }$ valued in a $\mathbb{Q}_{p}$-Banach
algebra $\mathcal{O}$, we may consider%
\begin{equation*}
\delta _{S_{0}}^{\mathbf{k}}:S_{0}\rightarrow \mathbb{Z}_{p}^{\times }%
\overset{\mathbf{k}}{\rightarrow }\mathcal{O}^{\times }\text{, }\delta _{S}^{%
\mathbf{k}}\left( s\right) :=\delta _{S}\left( s\right) ^{\mathbf{k}}\text{.}
\end{equation*}%
It is a locally analytic function, when $S\subset \mathbb{Q}_{p}^{2}\times 
\mathbb{Q}_{p}^{2}$ is a submanifold, because $\mathbf{k}$ is locally
analytic and $S_{0}\subset S$ is a submanifold.

Since $\psi \left( w_{1}g,w_{2}g\right) =\psi \left( w_{1},w_{2}\right) g$
for any $w_{i}=\left( x_{i},y_{i}\right) $ with $i=1,2$ and $g\in \mathbf{GL}%
_{2}\left( \mathbb{Q}_{p}\right) $, we have $\delta _{Sg}\left( sg\right)
=\delta _{S}\left( s\right) \mathrm{det}\left( g\right) $ for any $s\in S$
and $\left( Sg\right) _{n}=S_{n-\nu _{p}\left( g\right) }g$ where $\nu _{p}:=%
\mathrm{ord}_{p}\circ \mathrm{det}$. In particular, if $\Sigma \subset 
\mathbf{GL}_{2}\left( \mathbb{Q}_{p}\right) $ is a subsemigroup acting on $S$%
, then $\Gamma _{\Sigma }:=\Sigma \cap \mathrm{det}^{-1}\left( \mathbb{Z}%
_{p}^{\times }\right) $ acts on $S_{n}$ for every $n$. We have%
\begin{equation}
\delta _{S_{0}}^{\mathbf{k}}\left( sg\right) =\mathrm{det}\left( g\right) ^{%
\mathbf{k}}\delta _{S_{0}}^{\mathbf{k}}\left( s\right)  \label{F det 1}
\end{equation}%
and, in particular,%
\begin{equation}
\delta _{S_{0}}^{\mathbf{k}}\left( t_{1}s_{1},t_{2}s_{2}\right) =t_{1}^{%
\mathbf{k}}t_{2}^{\mathbf{k}}\delta _{S_{0}}^{\mathbf{k}}\left( s\right) 
\text{.}  \label{F det 2}
\end{equation}%
Noticing that $\left( W\times \widehat{W}\right) _{0}=W\times \widehat{W}$
and $\left( \widehat{W}\times W\right) _{0}=\widehat{W}\times W$\ we may
consider the locally analytic functions%
\begin{equation*}
\delta _{W\times \widehat{W}}^{\mathbf{k}}:W\times \widehat{W}\rightarrow 
\mathcal{O}^{\times }\text{, }\delta _{\widehat{W}\times W}^{\mathbf{k}}:%
\widehat{W}\times W\rightarrow \mathcal{O}^{\times }\text{ and }\delta
_{\left( W\times W\right) _{0}}^{\mathbf{k}}:W\times W\rightarrow \mathcal{O}%
^{\times }\text{.}
\end{equation*}%
Recall our notation for the twists by the norm. Since $\Gamma _{0}\left( p%
\mathbb{Z}_{p}\right) \subset \mathbf{GL}_{2}\left( \mathbb{Q}_{p}\right) $
is compact, $\mathrm{nrd}_{p}$ maps it into the maximal open compact
subgroup $\mathbb{Z}_{p}^{\times }\subset \mathbb{Q}_{p}^{\times }$. Hence,
if $D$ is a $\Gamma _{0}\left( p\mathbb{Z}_{p}\right) $-module with
coefficients in $\mathcal{O}$, it makes sense to consider $D\left( \mathbf{k}%
\right) :=D\left( \mathrm{nrd}_{p}^{\mathbf{k}}\right) $, the same
representation with action $v\cdot _{\mathbf{k}}g:=\mathrm{nrd}_{p}^{\mathbf{%
k}}\left( g\right) vg$. Also, recall we have $\mathrm{Nrd}_{p}^{\mathbf{k}%
}\in M_{p}\left( \mathcal{O}\left( \mathbf{k}\right) ,\mathrm{N}_{p}^{2%
\mathbf{k}}\right) ^{K}$ for every $K\in \mathcal{K}^{\diamond }$ (indeed $%
\mathrm{N}_{p}^{2\mathbf{k}}=\mathrm{Nrd}_{p}^{\mathbf{k}}$ on $Z_{f}=%
\mathbb{A}_{f}^{\times }$).

\bigskip

If $\underline{\mathbf{k}}=\left( \mathbf{k}_{1},\mathbf{k}_{2},\mathbf{k}%
_{3}\right) $ where $\mathbf{k}_{i}:\mathbb{Z}_{p}^{\times }\rightarrow 
\mathcal{O}_{i}^{\times }$ are $\mathcal{O}_{i}$-valued weights such that $%
\mathbf{k}_{1}\oplus \mathbf{k}_{2}\oplus \mathbf{k}_{3}$ is even, set $%
\underline{\mathbf{k}}^{\ast }:=\frac{\mathbf{k}_{1}\oplus \mathbf{k}%
_{2}\oplus \mathbf{k}_{3}}{2}$, $\underline{\mathbf{k}}_{1}^{\ast }:=\frac{%
\ominus \mathbf{k}_{1}\oplus \mathbf{k}_{2}\oplus \mathbf{k}_{3}}{2}$, $%
\underline{\mathbf{k}}_{2}^{\ast }:=\frac{\mathbf{k}_{1}\ominus \mathbf{k}%
_{2}\oplus \mathbf{k}_{3}}{2}$ and $\underline{\mathbf{k}}_{3}^{\ast }:=%
\frac{\mathbf{k}_{1}\oplus \mathbf{k}_{2}\ominus \mathbf{k}_{3}}{2}$, so
that $\underline{\mathbf{k}}_{i}^{\ast }:\mathbb{Z}_{p}^{\times }\rightarrow 
\mathcal{O}_{\underline{\mathbf{k}}}^{\times }$ for $\mathcal{O}_{\underline{%
\mathbf{k}}}:=\mathcal{O}_{1}\widehat{\otimes }\mathcal{O}_{2}\widehat{%
\otimes }\mathcal{O}_{3}$. We define $\underline{W}:=W\times W\times W$, $%
\underline{W}_{1}:=\widehat{W}\times W\times W$, $\underline{W}_{2}:=W\times 
\widehat{W}\times W$ and $\underline{W}_{3}:=W\times W\times \widehat{W}$.
Also, if $p_{i}:\underline{W}_{i}\rightarrow W\times W$ denotes the
projection onto the components which are different from $i$, we define $%
\underline{W}_{i}^{\circ }:=p_{i}^{-1}\left( \left( W\times W\right)
_{0}\right) $ (for example, $\underline{W}_{3}^{\circ }:=\left( W\times
W\right) _{0}\times \widehat{W}$). Then we define the locally analytic
functions%
\begin{equation*}
\Delta _{i,\underline{\mathbf{k}}}^{\circ }:\underline{W}_{i}^{\circ
}\rightarrow \mathcal{O}_{\underline{\mathbf{k}}}^{\times }
\end{equation*}%
by the rule%
\begin{eqnarray*}
\Delta _{1,\underline{\mathbf{k}}}^{\circ }\left( w_{1},w_{2},w_{3}\right)
&:&=\delta _{\left( W\times W\right) _{0}}^{\underline{\mathbf{k}}_{1}^{\ast
}}\left( w_{2},w_{3}\right) \delta _{\widehat{W}\times W}^{\underline{%
\mathbf{k}}_{2}^{\ast }}\left( w_{1},w_{3}\right) \delta _{\widehat{W}\times
W}^{\underline{\mathbf{k}}_{3}^{\ast }}\left( w_{1},w_{2}\right) \text{,} \\
\Delta _{2,\underline{\mathbf{k}}}^{\circ }\left( w_{1},w_{2},w_{3}\right)
&:&=\delta _{\widehat{W}\times W}^{\underline{\mathbf{k}}_{1}^{\ast }}\left(
w_{2},w_{3}\right) \delta _{\left( W\times W\right) _{0}}^{\underline{%
\mathbf{k}}_{2}^{\ast }}\left( w_{1},w_{3}\right) \delta _{W\times \widehat{W%
}}^{\underline{\mathbf{k}}_{3}^{\ast }}\left( w_{1},w_{2}\right) \text{,} \\
\Delta _{3,\underline{\mathbf{k}}}^{\circ }\left( w_{1},w_{2},w_{3}\right)
&:&=\delta _{W\times \widehat{W}}^{\underline{\mathbf{k}}_{1}^{\ast }}\left(
w_{2},w_{3}\right) \delta _{W\times \widehat{W}}^{\underline{\mathbf{k}}%
_{2}^{\ast }}\left( w_{1},w_{3}\right) \delta _{\left( W\times W\right)
_{0}}^{\underline{\mathbf{k}}_{3}^{\ast }}\left( w_{1},w_{2}\right) \text{.}
\end{eqnarray*}%
We remark that $\Gamma _{0}\left( p\mathbb{Z}_{p}\right) =\Gamma _{0}\left( p%
\mathbb{Z}_{p}\right) ^{\iota }\subset \Sigma _{0}\left( p\mathbb{Z}%
_{p}\right) ^{\iota }$ acts diagonally on $\underline{W}_{i}^{\circ }$. The
following lemma is an application of $\left( \text{\ref{F det 1}}\right) $, $%
\left( \text{\ref{F det 2}}\right) $ and the definitions of \S \ref{SS
Algebraic operations on weights}.

\begin{lemma}
\label{L 3-forms}We have $\Delta _{i,\underline{\mathbf{k}}}^{\circ }\in 
\mathcal{A}_{\mathbf{k}_{1}\boxplus \mathbf{k}_{2}\boxplus \mathbf{k}%
_{3}}\left( \underline{W}_{i}^{\circ }\right) \left( -\underline{\mathbf{k}}%
^{\ast }\right) ^{\Gamma _{0}\left( p\mathbb{Z}_{p}\right) }$.
\end{lemma}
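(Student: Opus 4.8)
The plan is to unwind the three statements packaged in ``$\Delta_{i,\underline{\mathbf{k}}}^{\circ}\in\mathcal{A}_{\mathbf{k}_{1}\boxplus\mathbf{k}_{2}\boxplus\mathbf{k}_{3}}(\underline{W}_{i}^{\circ})(-\underline{\mathbf{k}}^{\ast})^{\Gamma_{0}(p\mathbb{Z}_{p})}$'' and to check each by a direct computation; by the symmetry permuting the three couples of variables it is enough to treat one value of $i$, say $i=3$, whose domain $\underline{W}_{3}^{\circ}=(W\times W)_{0}\times\widehat{W}$ is the most transparent. Recalling that $\mathbf{k}_{1}\boxplus\mathbf{k}_{2}\boxplus\mathbf{k}_{3}$ is identified with $(\mathbf{k}_{1},\mathbf{k}_{2},\mathbf{k}_{3})$ and that the twist $(-\underline{\mathbf{k}}^{\ast})$ replaces the action $v\mapsto vg$ of $\Gamma_{0}(p\mathbb{Z}_{p})$ on $\mathcal{A}$ by $v\mapsto\det(g)^{-\underline{\mathbf{k}}^{\ast}}vg$ (note $\mathrm{nrd}_{p}=\det$ on $\mathbf{GL}_{2}(\mathbb{Z}_{p})$), the three things to prove are: (a) $\Delta_{3,\underline{\mathbf{k}}}^{\circ}$ is a well-defined, $\mathcal{O}_{\underline{\mathbf{k}}}^{\times}$-valued, locally analytic function on $\underline{W}_{3}^{\circ}$; (b) $\Delta_{3,\underline{\mathbf{k}}}^{\circ}(t_{1}w_{1},t_{2}w_{2},t_{3}w_{3})=t_{1}^{\mathbf{k}_{1}}t_{2}^{\mathbf{k}_{2}}t_{3}^{\mathbf{k}_{3}}\,\Delta_{3,\underline{\mathbf{k}}}^{\circ}(w_{1},w_{2},w_{3})$ for $(t_{1},t_{2},t_{3})\in(\mathbb{Z}_{p}^{\times})^{3}$; and (c) $\Delta_{3,\underline{\mathbf{k}}}^{\circ}(w_{1}g,w_{2}g,w_{3}g)=\det(g)^{\underline{\mathbf{k}}^{\ast}}\,\Delta_{3,\underline{\mathbf{k}}}^{\circ}(w_{1},w_{2},w_{3})$ for all $g\in\Gamma_{0}(p\mathbb{Z}_{p})$.

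For (a): on $\underline{W}_{3}^{\circ}$ one has $(w_{1},w_{2})\in(W\times W)_{0}$ and $w_{3}\in\widehat{W}$, so $(w_{2},w_{3}),(w_{1},w_{3})\in W\times\widehat{W}=(W\times\widehat{W})_{0}$ by the elementary identities $(W\times\widehat{W})_{0}=W\times\widehat{W}$ and $(\widehat{W}\times W)_{0}=\widehat{W}\times W$ recorded just before the definition of $\Delta_{i,\underline{\mathbf{k}}}^{\circ}$; hence each of the three factors of $\Delta_{3,\underline{\mathbf{k}}}^{\circ}$ is evaluated on a point of its declared domain. Each factor $\delta_{S_{0}}^{\mathbf{k}}$ is locally analytic and $\mathcal{O}^{\times}$-valued by construction, hence so is their product, which lands in $\mathcal{O}_{\underline{\mathbf{k}}}^{\times}$.

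For (b) I would substitute $(t_{1}w_{1},t_{2}w_{2},t_{3}w_{3})$ and apply (\ref{F det 2}) factor by factor, e.g.\ $\delta_{W\times\widehat{W}}^{\underline{\mathbf{k}}_{1}^{\ast}}(t_{2}w_{2},t_{3}w_{3})=t_{2}^{\underline{\mathbf{k}}_{1}^{\ast}}t_{3}^{\underline{\mathbf{k}}_{1}^{\ast}}\,\delta_{W\times\widehat{W}}^{\underline{\mathbf{k}}_{1}^{\ast}}(w_{2},w_{3})$; the scalar pulled out of the product is then $t_{1}^{\underline{\mathbf{k}}_{2}^{\ast}\oplus\underline{\mathbf{k}}_{3}^{\ast}}\,t_{2}^{\underline{\mathbf{k}}_{1}^{\ast}\oplus\underline{\mathbf{k}}_{3}^{\ast}}\,t_{3}^{\underline{\mathbf{k}}_{1}^{\ast}\oplus\underline{\mathbf{k}}_{2}^{\ast}}$, and one is reduced to the weight identities $\underline{\mathbf{k}}_{2}^{\ast}\oplus\underline{\mathbf{k}}_{3}^{\ast}=\mathbf{k}_{1}$, $\underline{\mathbf{k}}_{1}^{\ast}\oplus\underline{\mathbf{k}}_{3}^{\ast}=\mathbf{k}_{2}$, $\underline{\mathbf{k}}_{1}^{\ast}\oplus\underline{\mathbf{k}}_{2}^{\ast}=\mathbf{k}_{3}$. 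For (c) I would substitute $(w_{1}g,w_{2}g,w_{3}g)$; since $g\in\mathbf{GL}_{2}(\mathbb{Z}_{p})$ has $\nu_{p}(g)=0$, the diagonal $\Gamma_{0}(p\mathbb{Z}_{p})$-action preserves each of $(W\times W)_{0}$ and $W\times\widehat{W}$, so $\Delta_{3,\underline{\mathbf{k}}}^{\circ}$ is defined at $(w_{1}g,w_{2}g,w_{3}g)$, and (\ref{F det 1}) produces the overall factor $\det(g)^{\underline{\mathbf{k}}_{1}^{\ast}\oplus\underline{\mathbf{k}}_{2}^{\ast}\oplus\underline{\mathbf{k}}_{3}^{\ast}}$, reducing (c) to $\underline{\mathbf{k}}_{1}^{\ast}\oplus\underline{\mathbf{k}}_{2}^{\ast}\oplus\underline{\mathbf{k}}_{3}^{\ast}=\underline{\mathbf{k}}^{\ast}$; unwinding the twist, this says precisely that $\Delta_{3,\underline{\mathbf{k}}}^{\circ}$ lies in the $\Gamma_{0}(p\mathbb{Z}_{p})$-invariants, as claimed. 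The displayed weight identities I would extract from the definitions of $\oplus$, $\ominus$ and halving in \S\ref{SS Algebraic operations on weights} together with Lemma \ref{Distributions L WeightsOp}: upon ``doubling'' they become straightforward consequences of associativity, commutativity and $\mathbf{k}\ominus\mathbf{k}=0$ in $\mathcal{X}_{T}$.

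I expect the only genuinely delicate point to be this last bit of weight bookkeeping: halving an even weight is only well defined up to the conventions fixed in \S\ref{SS Algebraic operations on weights}, so one must choose the halves of $\mathbf{k}_{1}\oplus\mathbf{k}_{2}\oplus\mathbf{k}_{3}$, $\ominus\mathbf{k}_{1}\oplus\mathbf{k}_{2}\oplus\mathbf{k}_{3}$, $\mathbf{k}_{1}\ominus\mathbf{k}_{2}\oplus\mathbf{k}_{3}$ and $\mathbf{k}_{1}\oplus\mathbf{k}_{2}\ominus\mathbf{k}_{3}$ compatibly before the identities above hold on the nose rather than merely up to $2$-torsion. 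Everything else is a mechanical application of (\ref{F det 1}), (\ref{F det 2}) and the elementary facts about $W$ and $\widehat{W}$, and the cases $i=1,2$ are handled identically after permuting the variables.
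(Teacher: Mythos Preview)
Your proposal is correct and follows exactly the approach the paper indicates: the paper's proof consists of the single sentence that the lemma ``is an application of (\ref{F det 1}), (\ref{F det 2}) and the definitions of \S\ref{SS Algebraic operations on weights},'' and your argument is precisely a careful unpacking of that sentence into the three verifications (a), (b), (c). Your remark about the compatibility of the halving conventions is a fair caveat, but it does not reflect any additional idea present in the paper's own treatment.
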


\bigskip

We will now focus on the $i=3$ index, the other cases being similar. Since $%
\underline{W}_{3}=\underline{W}_{3}^{\circ }\sqcup \left( \underline{W}_{3}-%
\underline{W}_{3}^{\circ }\right) $ (resp. $W^{2}=\left( W^{2}\right)
_{0}\sqcup \left( W^{2}-\left( W^{2}\right) _{0}\right) $)\ is a disjoint
decomposition in open subsets, we have an extension by zero map $\cdot
_{\circ }:\mathcal{A}_{\underline{\mathbf{k}}}\left( \underline{W}%
_{3}^{\circ }\right) \rightarrow \mathcal{A}_{\underline{\mathbf{k}}}\left( 
\underline{W}_{3}\right) $ (resp. $\cdot _{0}:\mathcal{A}_{\mathbf{k}%
_{1}\boxplus \mathbf{k}_{2}}\left( \left( W^{2}\right) _{0}\right)
\rightarrow \mathcal{A}_{\mathbf{k}_{1}\boxplus \mathbf{k}_{2}}\left(
W^{2}\right) $). By duality, we obtain a map%
\begin{equation*}
\cdot ^{\circ }:\mathcal{D}\left( \underline{W}_{3},\underline{\mathbf{k}}%
\right) \rightarrow \mathcal{D}\left( \underline{W}_{3}^{\circ },\underline{%
\mathbf{k}}\right) \text{ (resp. }\cdot ^{0}:\mathcal{D}_{\mathbf{k}%
_{1}\boxplus \mathbf{k}_{2}}\left( W\times W\right) \rightarrow \mathcal{D}_{%
\mathbf{k}_{1}\boxplus \mathbf{k}_{2}}\left( \left( W\times W\right)
_{0}\right) \text{).}
\end{equation*}%
It follows from Lemma \ref{L 3-forms} that we may consider%
\begin{equation*}
\Lambda _{3,\underline{\mathbf{k}}}^{\circ }\left( \mu _{1},\mu _{2},\mu
_{3}\right) :=\left( \mu _{1}\boxtimes \mu _{2}\boxtimes \mu _{3}\right)
^{\circ }\left( \Delta _{3,\underline{\mathbf{k}}}^{\circ }\right) \in 
\mathcal{O}_{\underline{\mathbf{k}}}\text{ (}\mu _{i}\in \mathcal{D}_{%
\mathbf{k}_{i}}\left( W\right) \text{ for }i=1,2\text{ and }\mu _{3}\in 
\mathcal{D}_{\mathbf{k}_{3}}\left( \widehat{W}\right) \text{)}
\end{equation*}%
and that we have%
\begin{equation*}
\Lambda _{3,\underline{\mathbf{k}}}^{\circ }\in Hom_{\mathcal{O}\left[
\Gamma _{0}\left( p\mathbb{Z}_{p}\right) \right] }\left( \mathcal{D}_{%
\mathbf{k}_{1}}\left( W\right) \otimes _{\mathcal{O}}\mathcal{D}_{\mathbf{k}%
_{2}}\left( W\right) \otimes _{\mathcal{O}}\mathcal{D}_{\mathbf{k}%
_{3}}\left( \widehat{W}\right) ,\mathcal{O}_{\underline{\mathbf{k}}}\left( 
\underline{\mathbf{k}}^{\ast }\right) \right) .
\end{equation*}%
Suppose that we have given characters $\omega _{0,p}^{\mathbf{k}_{i}}$ for $%
i=1,2,3$ such that $\mathrm{Nrd}_{p}^{\underline{\mathbf{k}}^{\ast }}=\omega
_{0,p}^{\mathbf{k}_{1}}\omega _{0,p}^{\mathbf{k}_{2}}\omega _{0,p}^{\mathbf{k%
}_{3}}$. Taking $\Lambda =\Lambda _{\underline{\mathbf{k}}}^{\circ }$ in $%
\left( \text{\ref{p-adic F Prof n-Form2}}\right) $) gives the trilinear form%
\begin{equation*}
t_{3,\underline{\mathbf{k}}}^{\circ }:M_{p}\left( \mathcal{D}_{\mathbf{k}%
_{1}}\left( W\right) ,\omega _{0,p}^{\mathbf{k}_{1}}\right) \otimes
M_{p}\left( \mathcal{D}_{\mathbf{k}_{2}}\left( W\right) ,\omega _{0,p}^{%
\mathbf{k}_{2}}\right) \otimes M_{p}\left( \mathcal{D}_{\mathbf{k}%
_{3}}\left( \widehat{W}\right) ,\omega _{0,p}^{\mathbf{k}_{3}}\right)
\rightarrow \mathcal{O}_{\underline{\mathbf{k}}}\text{.}
\end{equation*}%
Suppose, for example, that we may write $\omega _{0,p}^{\mathbf{k}%
_{i}}\left( z\right) =\omega _{0,i}\left( \frac{z}{\mathrm{N}_{f}\left(
z\right) }\right) \left( \frac{z}{\mathrm{N}_{f}\left( z\right) }\right)
_{p}^{-\mathbf{k}_{i}}$ with $\omega _{0,i}$\ taking values in $F$ (as in
Example \ref{Example Nebetype1}) with $\omega _{0,1}\omega _{0,2}\omega
_{0,3}=1$; noticing that $\left( \frac{z}{\mathrm{N}_{f}\left( z\right) }%
\right) _{p}^{-\left( \mathbf{k}_{1}\oplus \mathbf{k}_{2}\oplus \mathbf{k}%
_{3}\right) }=\mathrm{Nrd}_{p}^{\underline{\mathbf{k}}^{\ast }}\left(
z\right) $ the above definition applies. When $\mathbf{k}_{i}$ extends to a
character $\mathbf{k}_{i}:\mathbb{Q}_{p}^{\times }\rightarrow \mathcal{O}%
_{i}^{\times }$, $\Delta _{3,\underline{\mathbf{k}}}^{\circ }$ makes sense
as an element $\Delta _{3,\underline{\mathbf{k}}}\in \mathcal{A}_{\mathbf{k}%
_{1}\boxplus \mathbf{k}_{2}\boxplus \mathbf{k}_{3}}\left( \underline{W}%
_{3}\right) \left( -\underline{\mathbf{k}}^{\ast }\right) ^{\Gamma
_{0}\left( p\mathbb{Z}_{p}\right) }$. We can therefore integrate this
function without first applying $\cdot ^{0}$ to the measures involved. The
result is a trilinear form%
\begin{equation*}
t_{3,\underline{\mathbf{k}}}:M_{p}\left( \mathcal{D}_{\mathbf{k}_{1}}\left(
W\right) ,\omega _{0,p}^{\mathbf{k}_{1}}\right) \otimes M_{p}\left( \mathcal{%
D}_{\mathbf{k}_{2}}\left( W\right) ,\omega _{0,p}^{\mathbf{k}_{2}}\right)
\otimes M_{p}\left( \mathcal{D}_{\mathbf{k}_{3}}\left( \widehat{W}\right)
,\omega _{0,p}^{\mathbf{k}_{3}}\right) \rightarrow \mathcal{O}_{\underline{%
\mathbf{k}}}\text{.}
\end{equation*}%
It follows from $\left( \text{\ref{Distributions F3}}\right) $ that, if $%
\varphi _{i,k_{i}}=\phi _{i\ast }^{\mathrm{al}\text{\textrm{g}}}\left(
\varphi _{i}\right) $ where $\mathbf{k}_{i}\overset{\phi _{i}}{\rightarrow }%
k_{i}\in \mathbb{N}$, then%
\begin{equation}
t_{3,\underline{k}}\left( \varphi _{1}\otimes \varphi _{2}\otimes \varphi
_{3}\mid W_{3}\right) =t_{\underline{k}}\left( \varphi _{1,k_{1}},\varphi
_{2,k_{2}},\varphi _{3,k_{3}}\mid W_{p}\right) \text{.}
\label{F Interpolation}
\end{equation}

\bigskip

The following key calculation relates the trilinear form $t_{3,\underline{k}%
}^{\circ }$ to $t_{3,\underline{k}}$. Write $\widehat{\pi }_{p}$ for the
idele concentrated at $p$, where we have $\left( \widehat{\pi }_{p}\right)
_{p}=\pi _{p}:=\left( 
\begin{array}{cc}
1 & 0 \\ 
0 & p%
\end{array}%
\right) $. If $\varphi \in M_{p}(\mathcal{D}_{k}(W))^{\Gamma _{0}\left( p%
\mathbb{Z}_{p}\right) }$, the $U_{p}$-operator is defined by the double
coset $K\widehat{\pi }_{p}K$, where $K=K^{p}\Gamma _{0}\left( p\mathbb{Z}%
_{p}\right) $ and $\varphi \in M_{p}(\mathcal{D}_{k}(W))^{K^{p}\Gamma
_{0}\left( p\mathbb{Z}_{p}\right) }$ with $K^{p}\subset B^{\times p}$, the
prime to $p$ subgroup of $B_{f}^{\times }$.

\begin{proposition}
\label{P:afafsdsf} For $i=1,2$, suppose $\varphi _{i}\in M_{p}(\mathcal{D}%
_{k_{i}}(W))^{\Gamma _{0}\left( p\mathbb{Z}_{p}\right) }$ is a $U_{p}$%
-eigenvector with $\varphi _{i}|U_{p}=a_{i}\varphi _{i}$, and view $\varphi
_{1}\otimes \varphi _{2}$ as an element of $M_{p}(\mathcal{D}_{k_{1}\boxplus
k_{2}}(W\times W))$. Then 
\begin{equation*}
(\varphi _{1}\otimes \varphi _{2})|U_{p}=a_{1}a_{2}\big(\varphi _{1}\otimes
\varphi _{2}-i_{\ast }(\varphi _{1}\otimes \varphi _{2})^{0}\big),
\end{equation*}%
where $i_{\ast }:M_{p}(\mathcal{D}_{k_{1}\boxplus k_{2}}(\left( W\times
W\right) _{0}))\rightarrow M_{p}(\mathcal{D}_{k_{1}\boxplus k_{2}}(W\times
W))$ is induced by the map $i_{\ast }:\mathcal{D}_{k_{1}\boxplus
k_{2}}(\left( W\times W\right) _{0})\rightarrow \mathcal{D}_{k_{1}\boxplus
k_{2}}(W\times W)$ induced by the inclusion $i:\left( W\times W\right)
_{0}\hookrightarrow W\times W$.
\end{proposition}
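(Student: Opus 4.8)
The plan is to expand the $U_{p}$-operator into coset representatives on both sides and to track, for each resulting distribution, its support in $W\times W$ relative to the clopen subset $\left( W\times W\right) _{0}$. Since $K=K^{p}\Gamma _{0}\left( p\mathbb{Z}_{p}\right) $ and $\varphi _{i}$ is fixed by $K^{p}$, the operator $U_{p}=\left[ K\widehat{\pi }_{p}K\right] $ is represented, via $\left( \text{\ref{F Hecke Operator}}\right) $, by the $p$ ideles $\gamma _{0},\dots ,\gamma _{p-1}$ concentrated at $p$ with $\gamma _{j,p}=\bigl( \begin{smallmatrix} 1 & j \\ 0 & p\end{smallmatrix}\bigr) $, the standard $U_{p}$-representatives for $\Gamma _{0}\left( p\mathbb{Z}_{p}\right) $. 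Putting $\psi _{i}^{(j)}:=\varphi _{i}\mid \gamma _{j}$, so that $\psi _{i}^{(j)}\left( x\right) =\varphi _{i}\left( \gamma _{j}x\right) \cdot \gamma _{j,p}$ by the definition of the Hecke action, one gets $\varphi _{i}\mid U_{p}=\sum _{j}\psi _{i}^{(j)}$. Because $\Sigma _{0}\left( p\mathbb{Z}_{p}\right) $ acts on $W\times W$ through the diagonal embedding into $\Sigma _{0}\left( p\mathbb{Z}_{p}\right) \times \Sigma _{0}\left( p\mathbb{Z}_{p}\right) $, and the product map $\mathbf{P}$ of Lemma \ref{Distributions L3} is equivariant for this product semigroup, the same representatives compute $U_{p}$ on the tensor product, yielding
\begin{equation*}
\left( \varphi _{1}\otimes \varphi _{2}\right) \mid U_{p}=\sum_{j}\psi _{1}^{(j)}\otimes \psi _{2}^{(j)}\qquad \text{whereas}\qquad \left( \varphi _{1}\mid U_{p}\right) \otimes \left( \varphi _{2}\mid U_{p}\right) =\sum_{j,j^{\prime }}\psi _{1}^{(j)}\otimes \psi _{2}^{(j^{\prime })}.
\end{equation*}

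The key point is a support computation. The $\Sigma _{0}\left( p\mathbb{Z}_{p}\right) $-action sends the Dirac $\delta _{w}$ to $\delta _{w\gamma _{j,p}}$, so by density of Dirac distributions each $\psi _{i}^{(j)}\left( x\right) $ lies in the image of $\mathcal{D}_{k_{i}}\left( W\gamma _{j,p}\right) $ inside $\mathcal{D}_{k_{i}}\left( W\right) $, where $W\gamma _{j,p}=\{\left( x^{\prime },y^{\prime }\right) \in W:y^{\prime }\equiv x^{\prime }j\pmod{p}\}$ is clopen in $W$; using the compatibility $\left( \text{\ref{Distributions F3}}\right) $ of $\mathbf{P}$ with Diracs, $\psi _{1}^{(j)}\left( x\right) \otimes \psi _{2}^{(j^{\prime })}\left( x\right) $ is supported on the clopen set $W\gamma _{j,p}\times W\gamma _{j^{\prime },p}$. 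At a point $\left( \left( x_{1},y_{1}\right) ,\left( x_{2},y_{2}\right) \right) $ of this set the defining function $\delta _{W\times W}$ of $\left( W\times W\right) _{0}$ takes the value $x_{1}y_{2}-x_{2}y_{1}\equiv x_{1}x_{2}\left( j^{\prime }-j\right) \pmod{p}$, which is a unit precisely when $j\neq j^{\prime }$, as $x_{1},x_{2}\in \mathbb{Z}_{p}^{\times }$. Hence $W\gamma _{j,p}\times W\gamma _{j^{\prime },p}\subseteq \left( W\times W\right) _{0}$ for $j\neq j^{\prime }$, while $W\gamma _{j,p}\times W\gamma _{j,p}$ lies in the complement of $\left( W\times W\right) _{0}$. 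Since $i_{\ast }\circ \left( \cdot \right) ^{0}$ is the continuous linear operation of restricting a distribution to the clopen subset $\left( W\times W\right) _{0}$, it follows that $i_{\ast }\bigl( \left( \psi _{1}^{(j)}\left( x\right) \otimes \psi _{2}^{(j^{\prime })}\left( x\right) \right) ^{0}\bigr) $ equals $\psi _{1}^{(j)}\left( x\right) \otimes \psi _{2}^{(j^{\prime })}\left( x\right) $ when $j\neq j^{\prime }$ and vanishes when $j=j^{\prime }$.

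Finally I would invoke the eigenvalue hypothesis. From $\varphi _{i}\mid U_{p}=a_{i}\varphi _{i}$ and the second identity above, $\sum _{j,j^{\prime }}\psi _{1}^{(j)}\left( x\right) \otimes \psi _{2}^{(j^{\prime })}\left( x\right) =a_{1}a_{2}\,\varphi _{1}\left( x\right) \otimes \varphi _{2}\left( x\right) $ in $\mathcal{D}_{k_{1}\boxplus k_{2}}\left( W\times W\right) $. Applying $i_{\ast }\circ \left( \cdot \right) ^{0}$ and using the support analysis, the diagonal terms ($j=j^{\prime }$) die and the off-diagonal ones survive unchanged, so $\sum _{j\neq j^{\prime }}\psi _{1}^{(j)}\left( x\right) \otimes \psi _{2}^{(j^{\prime })}\left( x\right) =a_{1}a_{2}\,i_{\ast }\bigl( \left( \varphi _{1}\left( x\right) \otimes \varphi _{2}\left( x\right) \right) ^{0}\bigr) $. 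Subtracting this from the full double sum and recognising the remaining diagonal sum via the first identity above gives $\left( \left( \varphi _{1}\otimes \varphi _{2}\right) \mid U_{p}\right) \left( x\right) =a_{1}a_{2}\bigl( \varphi _{1}\left( x\right) \otimes \varphi _{2}\left( x\right) -i_{\ast }\bigl( \left( \varphi _{1}\left( x\right) \otimes \varphi _{2}\left( x\right) \right) ^{0}\bigr) \bigr) $, which is the asserted formula since $x\in B_{f}^{\times }$ was arbitrary. The only delicate steps are bookkeeping ones: reducing the Hecke representatives to ones concentrated at $p$ using $K^{p}$-invariance, and making rigorous the phrase ``supported on $W\gamma _{j,p}\times W\gamma _{j^{\prime },p}$'' via density of Dirac distributions and $\left( \text{\ref{Distributions F3}}\right) $; the geometric heart of the matter is the one-line determinant congruence.
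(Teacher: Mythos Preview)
Your argument is correct and follows essentially the same route as the paper: both proofs expand $U_{p}$ via the representatives $\gamma_{j,p}=\bigl(\begin{smallmatrix}1&j\\0&p\end{smallmatrix}\bigr)$, observe that each term $\psi_{1}^{(j)}(x)\otimes\psi_{2}^{(j')}(x)$ is supported on $W\gamma_{j,p}\times W\gamma_{j',p}$, and use the determinant congruence to identify $\left(W\times W\right)_{0}$ with the union of the off-diagonal blocks. The only cosmetic difference is that the paper phrases the support bookkeeping via explicit projections $\mathrm{proj}_{i,j}$ onto the summands of $\mathcal{D}_{k_{1}\boxplus k_{2}}(W^{2})=\bigoplus_{i,j}\mathcal{D}_{k_{1}\boxplus k_{2}}(W_{i}\times W_{j})$, whereas you invoke density of Diracs directly; the content is the same.
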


\begin{proof}
Consider the decomposition 
\begin{equation*}
W=\bigsqcup_{i=0}^{p-1}W_{i},\quad \text{where}\quad W_{i}=W\pi
_{i}=\{w=\left( x,y\right) \in W:y\equiv ix\pmod{p}\}\text{.}
\end{equation*}%
Then $K\widehat{\pi }_{p}K=\bigsqcup_{i=0}^{p-1}K\pi _{i}$ and we can
compute:%
\begin{align*}
a_{1}a_{2}(\varphi _{1}\otimes \varphi _{2})(w)& =(\varphi _{1}|U_{p}\otimes
\varphi _{2}|U_{p})(w) \\
& =\sum_{i,j=0}^{p-1}\varphi _{1}(\pi _{i}w)\pi _{i}\otimes \varphi _{2}(\pi
_{j}w)\pi _{j} \\
& =\sum_{i=0}^{p-1}\varphi _{1}(\pi w)\pi _{i}\otimes \varphi _{3}(\pi
_{i}w)\pi _{i}+\sum_{%
\begin{smallmatrix}
i,j=0 \\ 
i\neq j%
\end{smallmatrix}%
}^{p-1}\varphi _{1}(\pi _{i}w)\pi _{i}\otimes \varphi _{2}(\pi _{j}w)\pi _{j}
\\
& =((\varphi _{1}\otimes \varphi _{2})|U_{p})(w)+A.
\end{align*}%
It remains to show that $A=a_{1}a_{2}i_{\ast }((\varphi _{1}\otimes \varphi
_{2})^{0}(w))$. To this end, note that we may write%
\begin{equation*}
W^{2}=\bigcup_{i,j=0}^{p-1}W_{i}\times W_{j}=\bigcup_{i=0}^{p-1}W_{i}\times
W_{i}\cup \bigcup_{%
\begin{smallmatrix}
i,j=0 \\ 
i\neq j%
\end{smallmatrix}%
}^{p-1}W_{i}\times W_{j}.
\end{equation*}%
Subordinate to this decomposition of spaces, we have a corresponding
decomposition of $\mathcal{D}_{k_{1}\boxplus k_{2}}(W^{2})$: 
\begin{equation*}
\mathcal{D}_{k_{1}\boxplus k_{2}}(W^{2})=\bigoplus_{i=0}^{p-1}\mathcal{D}%
_{k_{1}\boxplus k_{2}}(W_{i}\times W_{i})\oplus \bigoplus_{%
\begin{smallmatrix}
i,j=0 \\ 
i\neq j%
\end{smallmatrix}%
}^{p-1}\mathcal{D}_{k_{1}\boxplus k_{2}}(W_{i}\times W_{j}).
\end{equation*}%
Note that the spaces $W_{i}$ are all $\mathbb{Z}_{p}^{\times }$-stable, so
that these spaces of distributions are defined. Writing $\proj_{i,j}:%
\mathcal{D}_{k_{1}\boxplus k_{2}}(W^{2})\rightarrow \mathcal{D}%
_{k_{1}\boxplus k_{2}}(W_{i}\times W_{j})$ for the associated projections,
we have 
\begin{equation*}
+a_{1}a_{2}\sum_{i,j=0}^{p-1}\proj_{i,j}(\varphi _{1}(w)\otimes \varphi
_{2}(w))=a_{1}a_{2}(\varphi _{1}(w)\otimes \varphi
_{2}(w))=\sum_{i,j=0}^{p-1}\varphi _{1}(\pi _{i}w)\pi _{i}\otimes \varphi
_{2}(\pi _{j}w)\pi _{j}.
\end{equation*}%
Since 
\begin{equation*}
\varphi _{1}(\pi _{i}w)\pi _{i}\otimes \varphi _{2}(\pi _{j}w)\pi _{j}\in 
\mathcal{D}_{k_{1}\boxplus k_{2}}(W_{i}\times W_{j}),
\end{equation*}%
it follows that 
\begin{equation*}
a_{1}a_{2}\proj_{i,j}(\varphi _{1}(w)\otimes \varphi _{2}(w))=\varphi
_{1}(\pi _{i}w)\pi _{i}\otimes \varphi _{2}(\pi _{j}w).
\end{equation*}%
for all $i,j$. Therefore, 
\begin{equation}
A=a_{1}a_{2}\sum_{%
\begin{smallmatrix}
i,j=0 \\ 
i\neq j%
\end{smallmatrix}%
}^{p-1}\proj_{i,j}(\varphi _{1}(w)\otimes \varphi _{2}(w)).
\label{E:nskwykf}
\end{equation}%
One easily verifies the equality 
\begin{equation*}
\left( W\times W\right) ^{0}=\bigcup_{%
\begin{smallmatrix}
i,j=0 \\ 
i\neq j%
\end{smallmatrix}%
}^{p-1}W_{i}\times W_{j},
\end{equation*}%
implying that 
\begin{equation}
\sum_{%
\begin{smallmatrix}
i,j=0 \\ 
i\neq j%
\end{smallmatrix}%
}^{p-1}\proj_{i,j}(\varphi _{1}(w)\otimes \varphi _{2}(w))=i_{\ast
}((\varphi _{1}\otimes \varphi _{2})^{0}(w)).  \label{E:sdkfky}
\end{equation}%
Now substitute~\eqref{E:sdkfky} into~\eqref{E:nskwykf}.
\end{proof}

\bigskip

We are going to apply the results of \S \ref{SSS Adjointness}. We take $%
\Sigma _{p}=\Sigma _{0}\left( p\mathbb{Z}_{p}\right) $, $D=\mathcal{D}%
_{k_{1}\boxplus k_{2}}(W\times W)$ and $E=\mathcal{D}_{k_{3}}(\widehat{W})$
and $\mathrm{n}_{p}=\mathrm{nrd}_{p}$. Then $\mathbf{k}=\underline{k}^{\ast
} $ (resp. the central character $\kappa _{E}=k_{3}$ of $E=\mathcal{D}%
_{k_{3}}(\widehat{W})$) extends to the character $\underline{k}^{\ast }$ of $%
\mathbb{Q}_{p}^{\times }$ (resp. the character $\widetilde{\kappa _{E}}%
=k_{3} $\ of $\mathbb{Q}_{p}^{\times }$). Finally, justified by Examples \ref%
{Example Nebetype1} and \ref{Example Nebetype2},\ we suppose that we may
write $\omega _{0,p}^{k_{i}}\left( z\right) =\omega _{0,i}\left( \frac{z}{%
\mathrm{N}_{f}\left( z\right) }\right) \left( \frac{z}{\mathrm{N}_{f}\left(
z\right) }\right) _{p}^{-k_{i}}$ with $\omega _{0,1}\omega _{0,2}\omega
_{0,3}=1$. Then $\omega _{0,p,D}\left( z\right) =\omega _{0,1}\left( \frac{z%
}{\mathrm{N}_{f}\left( z\right) }\right) \omega _{0,2}\left( \frac{z}{%
\mathrm{N}_{f}\left( z\right) }\right) \left( \frac{z}{\mathrm{N}_{f}\left(
z\right) }\right) _{p}^{-k_{1}-k_{2}}$, $\omega _{0,p,E}\left( z\right)
=\omega _{0,3}\left( \frac{z}{\mathrm{N}_{f}\left( z\right) }\right) \left( 
\frac{z}{\mathrm{N}_{f}\left( z\right) }\right) _{p}^{-k_{3}}$ and we have $%
\omega _{0,p,D}\omega _{0,p,E}=\omega _{0,p}$ with $\omega _{0,p}\left(
z\right) =\left( \frac{z}{\mathrm{N}_{f}\left( z\right) }\right)
_{p}^{-k_{1}-k_{2}-k_{3}}=\mathrm{Nrd}_{p}^{\underline{k}^{\ast }}\left(
z\right) $.

\begin{lemma}
\label{L:afafsdsf}With these notations the trilinear form $t_{3,\underline{k}%
}$ defines an element of $\Hom_{\mathcal{O}\left[ \Sigma _{p},\Sigma
_{p}^{\iota }\right] }(D\otimes E,\mathcal{O}(\underline{k}^{\ast }))$.
Furthermore, we have $\mathrm{Nrd}_{f}^{\widetilde{\mathbf{k}}}\left( \pi
\right) _{p}\mathrm{nrd}_{p}^{-\widetilde{\kappa _{E}}}\left( \pi
_{p}\right) \mathrm{nrd}_{f}^{-\omega _{0,p,E}}\left( \pi \right) =\omega
_{0,3}\left( \frac{\mathrm{Nrd}_{f}\left( \pi \right) }{\mathrm{nrd}\left(
\pi \right) }\right) \mathrm{Nrd}_{f}^{\underline{k}_{3}^{\ast }}\left( \pi
\right) _{p}$ in Proposition \ref{P:heckeadjointprop}.
\end{lemma}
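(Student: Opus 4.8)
The plan is to dispatch the two assertions in turn; both reduce to the homogeneity properties of $\Delta _{3,\underline{k}}$ recorded in $\left( \text{\ref{F det 1}}\right) $, $\left( \text{\ref{F det 2}}\right) $ and Lemma \ref{L 3-forms}.

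For the first assertion, view the trilinear form $t_{3,\underline{k}}$ — more precisely the functional $\Lambda _{3,\underline{k}}$ of $\left( \text{\ref{p-adic F Prof n-Form2}}\right) $, after regrouping its first two arguments via associativity of $\boxtimes $ — as the pairing
\begin{equation*}
\left\langle v,w\right\rangle :=\left( v\boxtimes w\right) \left( \Delta _{3,\underline{k}}\right) ,\qquad v\in D=\mathcal{D}_{k_{1}\boxplus k_{2}}\left( W\times W\right) ,\ w\in E=\mathcal{D}_{k_{3}}\left( \widehat{W}\right) .
\end{equation*}
Because $\underline{k}$ is balanced and $k_{1}+k_{2}+k_{3}$ is even, the exponents $\underline{k}_{i}^{\ast }$ all lie in $\mathbb{N}$, so $\Delta _{3,\underline{k}}$ is an honest polynomial — a product of powers of the three $2\times 2$ minors — defined on all of $\underline{W}_{3}$; consequently $\left( \text{\ref{F det 1}}\right) $ upgrades to the \emph{unconditional} identity $\Delta _{3,\underline{k}}\left( \left( w_{1},w_{2},w_{3}\right) g\right) =\mathrm{det}\left( g\right) ^{\underline{k}^{\ast }}\Delta _{3,\underline{k}}\left( w_{1},w_{2},w_{3}\right) $ for every $g\in B_{p}^{\times }=\mathbf{GL}_{2}\left( \mathbb{Q}_{p}\right) $ acting diagonally, since $\underline{k}_{1}^{\ast }+\underline{k}_{2}^{\ast }+\underline{k}_{3}^{\ast }=\underline{k}^{\ast }$. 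Lemma \ref{L 3-forms} and $\left( \text{\ref{Distributions F3}}\right) $ already give $\left\langle -,-\right\rangle \in \Hom _{\mathcal{O}\left[ \Gamma _{0}\left( p\mathbb{Z}_{p}\right) \right] }\left( D\otimes E,\mathcal{O}\left( \underline{k}^{\ast }\right) \right) $; to promote this to $\left( \Sigma _{p},\Sigma _{p}^{\iota }\right) $-equivariance I argue by density of Dirac distributions, taking $v=\delta _{\left( w_{1},w_{2}\right) }^{k_{1}\boxplus k_{2}}$ and $w=\delta _{w_{3}}^{k_{3}}$, so that $v\boxtimes w=\delta _{\left( w_{1},w_{2},w_{3}\right) }$ and $\left( v\sigma \right) \boxtimes w=\delta _{\left( w_{1}\sigma ,w_{2}\sigma ,w_{3}\right) }$ for $\sigma \in \Sigma _{p}$. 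Writing $\sigma ^{-1}=\mathrm{nrd}\left( \sigma \right) ^{-1}\sigma ^{\iota }$ and using the displayed $B_{p}^{\times }$-equivariance together with the homogeneity of $\Delta _{3,\underline{k}}$ of degree $k_{3}$ in its third slot, one computes
\begin{equation*}
\Delta _{3,\underline{k}}\left( w_{1}\sigma ,w_{2}\sigma ,w_{3}\right) =\mathrm{det}\left( \sigma \right) ^{\underline{k}^{\ast }}\Delta _{3,\underline{k}}\left( w_{1},w_{2},w_{3}\sigma ^{-1}\right) =\mathrm{nrd}_{p}^{\underline{k}^{\ast }-k_{3}}\left( \sigma \right) \Delta _{3,\underline{k}}\left( w_{1},w_{2},w_{3}\sigma ^{\iota }\right) ,
\end{equation*}
i.e. $\left\langle v\sigma ,w\right\rangle =\mathrm{nrd}_{p}^{\widetilde{\mathbf{k}}-\widetilde{\kappa _{E}}}\left( \sigma \right) \left\langle v,w\sigma ^{\iota }\right\rangle $ with $\widetilde{\mathbf{k}}=\underline{k}^{\ast }$ and $\widetilde{\kappa _{E}}=k_{3}$, which is membership in $\Hom _{\mathcal{O}\left[ \Sigma _{p},\Sigma _{p}^{\iota }\right] }\left( D\otimes E,\mathcal{O}\left( \underline{k}^{\ast }\right) \right) $. (Equivalently, one may deduce this from Remark \ref{R1}, taking $\widetilde{D}$, $\widetilde{E}$ to be the homogeneous distribution modules of $\mathbb{Q}_{p}^{2}\setminus \left\{ 0\right\} $, on which $B_{p}^{\times }$ acts and which carry the $B_{p}^{\times }$-equivariant extension $\left\langle -,-\right\rangle ^{\sim }$ of the pairing given by the same formula.)

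The second assertion is a bookkeeping computation with the definitions recalled just before the statement: $\widetilde{\mathbf{k}}=\underline{k}^{\ast }$, $\widetilde{\kappa _{E}}=k_{3}$, and $\omega _{0,p,E}\left( z\right) =\omega _{0,3}\left( z/\mathrm{N}_{f}\left( z\right) \right) \left( z/\mathrm{N}_{f}\left( z\right) \right) _{p}^{-k_{3}}$. Since $\pi $ is concentrated at $p$ with $\pi _{p}\in \Sigma _{0}\left( p\mathbb{Z}_{p}\right) $, writing $n=v_{p}\left( \mathrm{det}\,\pi _{p}\right) $ one has $\mathrm{Nrd}_{f}\left( \pi \right) =\left\vert \mathrm{nrd}\left( \pi \right) \right\vert _{\mathbb{A}_{f}}^{-1}=p^{n}$, $\mathrm{nrd}_{p}\left( \pi _{p}\right) =\mathrm{det}\,\pi _{p}$, and $\mathrm{nrd}_{f}\left( \pi \right) =z$ the idele equal to $\mathrm{det}\,\pi _{p}$ at $p$ and to $1$ elsewhere. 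In the left-hand side the factor $\mathrm{nrd}_{f}^{-\omega _{0,p,E}}\left( \pi \right) =\omega _{0,p,E}\left( z\right) ^{-1}$ contributes $\omega _{0,3}\left( z/\mathrm{N}_{f}\left( z\right) \right) ^{-1}\left( \mathrm{det}\,\pi _{p}\right) ^{k_{3}}p^{-nk_{3}}$; the powers $\left( \mathrm{det}\,\pi _{p}\right) ^{k_{3}}$ and $\left( \mathrm{det}\,\pi _{p}\right) ^{-k_{3}}$ (the latter from $\mathrm{nrd}_{p}^{-\widetilde{\kappa _{E}}}\left( \pi _{p}\right) $) cancel, and using $\underline{k}^{\ast }-k_{3}=\underline{k}_{3}^{\ast }$ one is left with $p^{n\underline{k}_{3}^{\ast }}\,\omega _{0,3}\left( z/\mathrm{N}_{f}\left( z\right) \right) ^{-1}$. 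Finally $z/\mathrm{N}_{f}\left( z\right) $ is the inverse of the idele $\mathrm{Nrd}_{f}\left( \pi \right) /\mathrm{nrd}\left( \pi \right) $ (equal to $\mathrm{det}\left( \pi _{p}\right) /p^{n}$ at $p$ and to $p^{-n}$ elsewhere), so $\omega _{0,3}\left( z/\mathrm{N}_{f}\left( z\right) \right) ^{-1}=\omega _{0,3}\left( \mathrm{Nrd}_{f}\left( \pi \right) /\mathrm{nrd}\left( \pi \right) \right) $ and $p^{n\underline{k}_{3}^{\ast }}=\mathrm{Nrd}_{f}^{\underline{k}_{3}^{\ast }}\left( \pi \right) _{p}$, which gives the right-hand side.

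The one point deserving care is the upgrade in the first paragraph: one must make sure that $\Delta _{3,\underline{k}}$ extends from $\underline{W}_{3}^{\circ }$ to a \emph{locally analytic} (indeed polynomial, hence $B_{p}^{\times }$-equivariant) function on all of $\underline{W}_{3}$, so that the identity $\left\langle v\sigma ,w\right\rangle =\mathrm{nrd}_{p}^{\widetilde{\mathbf{k}}-\widetilde{\kappa _{E}}}\left( \sigma \right) \left\langle v,w\sigma ^{\iota }\right\rangle $ holds for every $\sigma \in \Sigma _{p}$ and not merely for $\sigma \in \Gamma _{0}\left( p\mathbb{Z}_{p}\right) $; this is exactly where the hypothesis that $\underline{k}$ be balanced with $k_{1}+k_{2}+k_{3}$ even is used. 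Granting this, both statements are formal, and the lemma is precisely what is needed to feed the pair $\left( D,E\right) $ into Proposition \ref{P:heckeadjointprop} — and, together with Proposition \ref{P:afafsdsf}, to compare $t_{3,\underline{k}}^{\circ }$ with $t_{3,\underline{k}}$.
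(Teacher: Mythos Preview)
Your proof is correct and follows essentially the same approach as the paper. For the first assertion, the paper extends $\Delta_{3,\underline{k}}$ to $\widetilde{\Delta}_{\underline{k}}$ on $\mathbb{Q}_p^2\times\mathbb{Q}_p^2\times\mathbb{Q}_p^2$, introduces the enlarged modules $\widetilde{D}=\mathcal{D}_{k_1\boxplus k_2}(\mathbb{Q}_p^2\times\mathbb{Q}_p^2)$ and $\widetilde{E}=\mathcal{D}_{k_3}(\mathbb{Q}_p^2)$ with the extended pairing, checks the $B_p^\times$-equivariance there, and then invokes Remark~\ref{R1}; you instead unwind that remark directly on Dirac distributions, which amounts to the same computation (and you note the Remark~\ref{R1} route as an alternative). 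For the second assertion the paper simply says ``follows by a simple computation,'' which you have carried out explicitly and correctly.
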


\begin{proof}
Note that $\Delta _{i,\underline{k}}$ defines indeed $\widetilde{\Delta }_{%
\underline{k}}\in \mathcal{A}_{k_{1}\boxplus k_{2}\boxplus k_{3}}(\mathbb{Q}%
_{p}^{2}\times \mathbb{Q}_{p}^{2}\times \mathbb{Q}_{p}^{2})$ such that $%
\widetilde{\Delta }_{\underline{k}\mid \underline{W}_{3}}=\Delta _{i,%
\underline{k}}$. We take $\widetilde{D}:=\mathcal{D}_{k_{1}\boxplus k_{2}}(%
\mathbb{Q}_{p}^{2}\times \mathbb{Q}_{p}^{2})$ and $\widetilde{E}:=\mathcal{D}%
_{k_{3}}(\mathbb{Q}_{p}^{2})$, so that $\mathbf{k}_{\widetilde{E}}=%
\widetilde{\mathbf{k}}_{E}=k_{3}$. The pairing associated to $t_{3,%
\underline{k}}$ is given by $\left\langle \mu _{12},\mu _{3}\right\rangle
:=(\mu _{12}\widehat{\otimes }\mu _{3})(\Delta _{\underline{k}})$ and we
define $\left\langle \mu _{12},\mu _{3}\right\rangle ^{\sim }:=(\mu _{12}%
\widehat{\otimes }\mu _{3})(\widetilde{\Delta }_{\underline{k}})$. Since $%
\mathbb{Q}_{p}^{2}\times \mathbb{Q}_{p}^{2}\times \mathbb{Q}%
_{p}^{2}=(W^{2}\times \widehat{W})\sqcup Z$ with $Z$ an open subset, 
\begin{align*}
\mathcal{A}_{k_{1}\boxplus k_{2}\boxplus k_{3}}(\mathbb{Q}_{p}^{2}\times 
\mathbb{Q}_{p}^{2}\times \mathbb{Q}_{p}^{2})& =\mathcal{A}_{k_{1}\boxplus
k_{2}\boxplus k_{3}}(W^{2}\times \widehat{W})\oplus \mathcal{A}%
_{k_{1}\boxplus k_{2}\boxplus k_{3}}(Z) \\
\text{and}\quad \mathcal{D}_{k_{1}\boxplus k_{2}\boxplus k_{3}}(\mathbb{Q}%
_{p}^{2}\times \mathbb{Q}_{p}^{2}\times \mathbb{Q}_{p}^{2})& =\mathcal{D}%
_{k_{1}\boxplus k_{2}\boxplus k_{3}}(W^{2}\times \widehat{W})\oplus \mathcal{%
D}_{k_{1}\boxplus k_{2}\boxplus k_{3}}(Z).
\end{align*}%
For elements $\mu _{12}\in \mathcal{D}_{k_{1}\boxplus k_{2}}(W^{2})$ and $%
\mu _{3}\in \mathcal{D}_{k_{3}}(\widehat{W})$, the distribution $\mu _{12}%
\widehat{\otimes }\mu _{3}$ is supported on $\mathcal{D}_{k_{1}\boxplus
k_{2}\boxplus k_{3}}(W^{2}\times \widehat{W})$ and we see that $\left\langle
\mu _{12},\mu _{3}\right\rangle ^{\sim }=\left\langle \mu _{12},\mu
_{3}\right\rangle $. We note that the relation $\sigma \widetilde{\Delta }_{%
\underline{k}}=\det (\sigma )^{\underline{k}}\widetilde{\Delta }_{\underline{%
k}}$ for every $\sigma \in \Sigma _{p}$\ implies%
\begin{equation*}
\left\langle \mu _{12}\sigma ,\mu _{3}\sigma \right\rangle ^{\sim }:=(\mu
_{12}\sigma \widehat{\otimes }\mu _{3}\sigma )(\widetilde{\Delta }_{%
\underline{k}})=\det (\sigma )^{\underline{k}}(\mu _{12}\widehat{\otimes }%
\mu _{3})(\widetilde{\Delta }_{\underline{k}})=\det (\sigma )^{\underline{k}%
}\left\langle \mu _{12},\mu _{3}\right\rangle ^{\sim }\text{.}
\end{equation*}%
Now apply Remark \ref{R1} in order to get the first statement. Finally, the
second statement follows by a simple computation.
\end{proof}

\bigskip

Write $\mathbf{p}^{\prime }\in \mathbb{A}_{f}^{\times }$ for the finite
idele $\left( \mathbf{p}^{\prime }\right) _{v}=p$ for every $v\neq p$ and $%
\left( \mathbf{p}^{\prime }\right) _{p}=1$.

\begin{corollary}
\label{C:afafsdsf}For $i=1,2$, let $\varphi _{i}\in M_{p}(\mathcal{D}%
_{k_{i}}(W),\omega _{0,p}^{k_{i}})^{\Gamma _{0}\left( p\mathbb{Z}_{p}\right)
}$ be a $U_{p}$-eigenvector with $\varphi _{i}|U_{p}=\alpha _{i}\varphi _{i}$
and let $\varphi _{3}\in M_{p}(\mathcal{D}_{k_{3}}(\widehat{W}),\omega
_{0,p}^{k_{3}})^{\Gamma _{0}\left( p\mathbb{Z}_{p}\right) }$ be a $%
U_{p}^{\iota }$-eigenvector with $\varphi _{3}|U_{p}^{\iota }=\alpha
_{3}\varphi _{3}$. Then%
\begin{equation*}
t_{3,\underline{k}}^{\circ }(\varphi _{1}\otimes \varphi _{2}\otimes \varphi
_{3})=(1-\omega _{0,3}\left( \mathbf{p}^{\prime }\right) \frac{\alpha _{3}}{%
\alpha _{1}\alpha _{2}}p^{\underline{k}_{3}^{\ast }})t_{3,\underline{k}%
}(\varphi _{1}\otimes \varphi _{2}\otimes \varphi _{3}).
\end{equation*}
\end{corollary}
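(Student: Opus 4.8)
The plan is to reduce the identity to the Hecke adjointness of Proposition~\ref{P:heckeadjointprop}, in the shape given by Lemma~\ref{L:afafsdsf}, once the two $U_p$-eigenvalue hypotheses have been used. First I would compare $t_{3,\underline{k}}^{\circ}$ with the ``non-circle'' form $t_{3,\underline{k}}$ (which is defined here because the weights $k_i\in\mathbb{N}$ extend to $\mathbb{Q}_p^{\times}$). Unwinding the definition of $\Lambda_{3,\underline{k}}^{\circ}$, the distribution $\varphi_1(x)\boxtimes\varphi_2(x)\boxtimes\varphi_3(x)$ on $\underline{W}_3=(W\times W)\times\widehat{W}$ is restricted to $\underline{W}_3^{\circ}=(W\times W)_0\times\widehat{W}$ and then evaluated on $\Delta_{3,\underline{k}}^{\circ}=\Delta_{3,\underline{k}}\big|_{\underline{W}_3^{\circ}}$. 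Since $\boxtimes$ is associative and is characterised on Dirac distributions, this restriction affects only the $W\times W$-factor, giving $(\mu_1\boxtimes\mu_2)^{0}\boxtimes\mu_3$; and since restriction of distributions is adjoint to extension by zero of functions, evaluating this on $\Delta_{3,\underline{k}}^{\circ}$ equals evaluating $i_{\ast}\big((\mu_1\boxtimes\mu_2)^{0}\big)\boxtimes\mu_3$ on $\Delta_{3,\underline{k}}$, with $i_{\ast}$ and $(\cdot)^{0}$ as in Proposition~\ref{P:afafsdsf}. Summing over $x\in K_{\varphi}\backslash B_f^{\times}/B^{\times}$ this yields
\[
t_{3,\underline{k}}^{\circ}(\varphi_1\otimes\varphi_2\otimes\varphi_3)=t_{3,\underline{k}}\big(i_{\ast}((\varphi_1\otimes\varphi_2)^{0})\otimes\varphi_3\big),
\]
where $\varphi_1\otimes\varphi_2$ is regarded as an element of $M_p(\mathcal{D}_{k_1\boxplus k_2}(W\times W))$.

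Next I would invoke Proposition~\ref{P:afafsdsf}: from $(\varphi_1\otimes\varphi_2)|U_p=\alpha_1\alpha_2\big(\varphi_1\otimes\varphi_2-i_{\ast}((\varphi_1\otimes\varphi_2)^{0})\big)$ we get $i_{\ast}((\varphi_1\otimes\varphi_2)^{0})=\varphi_1\otimes\varphi_2-(\alpha_1\alpha_2)^{-1}(\varphi_1\otimes\varphi_2)|U_p$, so by linearity of $t_{3,\underline{k}}$,
\[
t_{3,\underline{k}}^{\circ}(\varphi_1\otimes\varphi_2\otimes\varphi_3)=t_{3,\underline{k}}(\varphi_1\otimes\varphi_2\otimes\varphi_3)-\frac{1}{\alpha_1\alpha_2}\,t_{3,\underline{k}}\big((\varphi_1\otimes\varphi_2)|U_p\otimes\varphi_3\big).
\]

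It remains to move $U_p$ onto the third variable. By Lemma~\ref{L:afafsdsf}, $t_{3,\underline{k}}$ is the form $\langle-,-\rangle_{M_p}$ attached to an element of $\mathrm{Hom}_{\mathcal{O}[\Sigma_p,\Sigma_p^{\iota}]}(D\otimes E,\mathcal{O}(\underline{k}^{\ast}))$, so Proposition~\ref{P:heckeadjointprop} applies with $\pi=\widehat{\pi}_p$, $f=\varphi_1\otimes\varphi_2\in M_p(D,\omega_{0,p,D})^{\Gamma_0(p\mathbb{Z}_p)}$ and $g=\varphi_3\in M_p(E,\omega_{0,p,E})^{\Gamma_0(p\mathbb{Z}_p)}$; here $T_{\widehat{\pi}_p}$ on $D$-valued forms is $U_p$ and $T_{\widehat{\pi}_p^{\iota}}$ on $E$-valued forms is $U_p^{\iota}$, so $g|T_{\widehat{\pi}_p^{\iota}}=\alpha_3\varphi_3$. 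By Lemma~\ref{L:afafsdsf} the constant appearing there equals $\omega_{0,3}\big(\frac{\mathrm{Nrd}_f(\widehat{\pi}_p)}{\mathrm{nrd}(\widehat{\pi}_p)}\big)\,\mathrm{Nrd}_f^{\underline{k}_3^{\ast}}(\widehat{\pi}_p)_p$; and since $\mathrm{nrd}(\widehat{\pi}_p)$ is the idele with value $p$ at $p$ and $1$ elsewhere while $\mathrm{Nrd}_f(\widehat{\pi}_p)=|\mathrm{nrd}(\widehat{\pi}_p)|_{\mathbb{A}_f}^{-1}=p\in\mathbb{Q}_+^{\times}$, the first factor is $\omega_{0,3}(\mathbf{p}^{\prime})$ and the second is $p^{\underline{k}_3^{\ast}}$. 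Hence $t_{3,\underline{k}}\big((\varphi_1\otimes\varphi_2)|U_p\otimes\varphi_3\big)=\omega_{0,3}(\mathbf{p}^{\prime})\,p^{\underline{k}_3^{\ast}}\,\alpha_3\,t_{3,\underline{k}}(\varphi_1\otimes\varphi_2\otimes\varphi_3)$, and substituting into the previous display gives the claimed formula.

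The main obstacle is the first step: making the identification of the $\cdot^{\circ}$/$\cdot^{0}$ restrictions in the definition of $\Lambda_{3,\underline{k}}^{\circ}$ with the module-level operation $i_{\ast}(\cdot)^{0}$ of Proposition~\ref{P:afafsdsf} completely rigorous — in particular checking, via the density of Dirac distributions, that restricting the measure $\varphi_1(x)\boxtimes\varphi_2(x)$ to $(W\times W)_0$ and pairing with $\Delta_{3,\underline{k}}^{\circ}$ agrees with pairing its extension-by-zero pushforward against $\Delta_{3,\underline{k}}$ on all of $\underline{W}_3$. Once this is in place, the remaining steps are formal consequences of Propositions~\ref{P:afafsdsf} and~\ref{P:heckeadjointprop} together with the elementary computation of $\mathrm{nrd}(\widehat{\pi}_p)$ and $\mathrm{Nrd}_f(\widehat{\pi}_p)$.
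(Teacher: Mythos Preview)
Your proof is correct and follows essentially the same route as the paper's: both reduce $t_{3,\underline{k}}^{\circ}$ to $t_{3,\underline{k}}$ via the identity relating $(\cdot)^{0}$ and $i_{\ast}$ (checked on Dirac distributions), then invoke Proposition~\ref{P:afafsdsf} to produce the $U_p$-term and Proposition~\ref{P:heckeadjointprop} (through Lemma~\ref{L:afafsdsf}) to transfer it to $\varphi_3$. The only difference is organizational---the paper packages the argument through auxiliary bilinear pairings $\langle-,-\rangle_t$ and $\langle-,-\rangle_t^{\circ}$ rather than working directly with the trilinear forms---but the substance is the same.
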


\begin{proof}
Recall the morphism $\cdot ^{0}:\mathcal{D}_{\mathbf{k}_{1}\boxplus \mathbf{k%
}_{2}}\left( W\times W\right) \rightarrow \mathcal{D}_{k_{1}\boxplus
k_{2}}\left( \left( W\times W\right) _{0}\right) $. Given $\mu _{i}\in 
\mathcal{D}_{\mathbf{k}_{i}}\left( W\right) $ for $i=1,2$ and $\mu _{3}\in 
\mathcal{D}_{\mathbf{k}_{3}}\left( \widehat{W}\right) $, we may therefore
consider%
\begin{equation*}
\left\langle \mu _{12},\mu _{3}\right\rangle _{t}^{\circ }:=\left( \mu
_{12}^{0}\boxtimes \mu _{3}\right) \left( \Delta _{\underline{\mathbf{k}}%
}^{\circ }\right) \text{.}
\end{equation*}%
This is granted by Lemma \ref{L 3-forms}, which also implies $\left\langle
-,-\right\rangle _{t}^{\circ }\in Hom_{\mathcal{O}\left[ \Gamma _{0}\left( p%
\mathbb{Z}_{p}\right) \right] }\left( \mathcal{D}_{\mathbf{k}_{1}\boxplus 
\mathbf{k}_{2}}\left( \left( W^{2}\right) _{0}\right) \otimes \mathcal{D}_{%
\mathbf{k}_{3}}\left( \widehat{W}\right) ,\mathcal{O}\left( \underline{%
\mathbf{k}}^{\ast }\right) \right) $. Taking $\Lambda =\left\langle
-,-\right\rangle _{t}^{\circ }$ in $\left( \text{\ref{p-adic F Prof n-Form2}}%
\right) $ gives the bilinear form%
\begin{equation*}
\left\langle -,-\right\rangle _{t}^{\circ }:M_{p}\left( \mathcal{D}_{\mathbf{%
k}_{1}\boxplus \mathbf{k}_{2}}\left( \left( W^{2}\right) _{0}\right) ,\omega
_{0,p,D}\right) \otimes M_{p}\left( \mathcal{D}_{\mathbf{k}_{3}}\left( 
\widehat{W}\right) ,\omega _{0,p,E}\right) \rightarrow \mathcal{O}\text{,}
\end{equation*}%
It is clear that $\left\langle \mathbf{P}^{\mathbf{k}_{1},\mathbf{k}%
_{2}}\left( \mu _{1}\widehat{\otimes }_{\iota }\mu _{2}\right) ^{0},\mu
_{3}\right\rangle _{t}^{\circ }=t_{3,\underline{\mathbf{k}}}^{\circ }\left(
\mu _{1},\mu _{2},\mu _{3}\right) $ (we just need to check the equality on
Dirac distributions), from which we see that%
\begin{equation*}
t_{3,\underline{\mathbf{k}}}^{\circ }\left( \varphi _{1},\varphi
_{2},\varphi _{3}\right) =\left\langle \left( \varphi _{1}\otimes \varphi
_{2}\right) ^{0},\varphi _{3}\right\rangle _{t}^{\circ }\text{,}
\end{equation*}%
if $\varphi _{1}\otimes \varphi _{2}$ is viewed as an element of $%
M_{p}\left( \mathcal{D}_{\mathbf{k}_{1}\boxplus \mathbf{k}_{2}}\left(
W^{2}\right) ,\omega _{0,p,D}\right) $. A similar result holds true for $%
t_{3,\underline{k}}$, namely we may define as above%
\begin{equation*}
\left\langle -,-\right\rangle _{t}:M_{p}\left( \mathcal{D}_{\mathbf{k}%
_{1}\boxplus \mathbf{k}_{2}}\left( W^{2}\right) ,\omega _{0,p,D}\right)
\otimes M_{p}\left( \mathcal{D}_{\mathbf{k}_{3}}\left( W\right) ,\omega
_{0,p,E}\right) \rightarrow \mathcal{O}
\end{equation*}%
for which%
\begin{equation*}
t_{3,\underline{k}}\left( \varphi _{1},\varphi _{2},\varphi _{3}\right)
=\left\langle \varphi _{1}\otimes \varphi _{2},\varphi _{3}\right\rangle _{t}%
\text{.}
\end{equation*}

By Proposition~\ref{P:afafsdsf}, we have 
\begin{equation}
\langle \varphi _{1}\otimes \varphi _{2},\varphi _{3}\rangle _{t}=\frac{1}{%
\alpha _{1}\alpha _{3}}\langle (\varphi _{1}\otimes \varphi
_{2})|U_{p},\varphi _{3}\rangle _{t}+\langle i_{\ast }(\varphi _{1}\otimes
\varphi _{2})^{0},\varphi _{3}\rangle _{t}  \label{E:aaa}
\end{equation}%
Proposition \ref{P:heckeadjointprop}, which applies thanks to Lemma \ref%
{L:afafsdsf} implies that 
\begin{equation}
\langle (\varphi _{1}\otimes \varphi _{2})|U_{p},\varphi _{3}\rangle
_{t}=\omega _{0,3}\left( \mathbf{p}^{\prime }\right) p^{\underline{k}%
_{3}^{\ast }}\langle \varphi _{1}\otimes \varphi _{2},\varphi
_{3}|U_{p}^{\iota }\rangle _{t}=\omega _{0,3}\left( \mathbf{p}^{\prime
}\right) p^{\underline{k}_{3}^{\ast }}\alpha _{3}\langle \varphi _{1}\otimes
\varphi _{2},\varphi _{3}\rangle _{t}.  \label{E:bbb}
\end{equation}%
Finally, it is easy to see that $\left\langle i_{\ast }\left( \mu
_{1}\otimes \mu _{2}\right) ^{0},\mu _{3}\right\rangle _{t}=\left\langle
\left( \mu _{1}\otimes \mu _{2}\right) ^{0},\mu _{3}\right\rangle
_{t}^{\circ }$ (once again checking the equality on Dirac distributions),
from which we see that%
\begin{equation}
\left\langle i_{\ast }\left( \varphi _{1}\otimes \varphi _{2}\right)
^{0},\varphi _{3}\right\rangle _{t}=\left\langle \left( \varphi _{1}\otimes
\varphi _{2}\right) ^{0},\varphi _{3}\right\rangle _{t}^{\circ }\text{.}
\label{E:ccc}
\end{equation}%
The result follows by combining~\eqref{E:aaa}, \eqref{E:bbb}, and~%
\eqref{E:ccc}.
\end{proof}

\section{Proof of the interpolation property}

\subsection{\label{SS p-Stabilization}Degeneracy maps and $p$-stabilizations}

Write $\widehat{\pi }_{p}$ (resp. $\widehat{\omega }_{p}$)\ for the idele
concentrated at $p$, where we have%
\begin{equation*}
\left( \widehat{\pi }_{p}\right) _{p}=\pi _{p}:=\left( 
\begin{array}{cc}
1 & 0 \\ 
0 & p%
\end{array}%
\right) \text{, }\left( \widehat{\omega }_{p}\right) _{p}=\omega
_{p}:=\left( 
\begin{array}{cc}
0 & -1 \\ 
p & 0%
\end{array}%
\right) \text{.}
\end{equation*}%
We fix levels $K\subset K^{\#}$ where $K_{v}=K_{v}^{\#}$ for every (finite)$%
\ $place $v\neq p$ and $K_{p}=\Gamma _{0}\left( p\mathbb{Z}_{p}\right)
\subset \mathbf{GL}_{2}\left( \mathbb{Z}_{p}\right) =K_{p}^{\#}$. Justified
by Example \ref{Example Nebetype1}, we suppose in this \S \ref{SS
p-Stabilization} that we may write $\omega _{0,p}\left( z\right) =\omega
_{0}\left( \frac{z}{\mathrm{N}_{f}\left( z\right) }\right) \mathrm{N}%
_{f}^{k}\left( z\right) $. We have have two degeneracy maps%
\begin{equation*}
K^{\#}1K=K^{\#}1,K^{\#}\widehat{\pi }_{p}^{\iota }K:M\left( \mathbf{V}%
_{k,F},\omega _{0,p}\right) ^{K^{\#}}\rightarrow M\left( \mathbf{V}%
_{k,F},\omega _{0,p}\right) ^{K}\text{.}
\end{equation*}%
Define%
\begin{equation*}
\varphi ^{\left( p\right) }:=\varphi \mid K^{\#}\widehat{\pi }_{p}^{\iota
}K\in M\left( \mathbf{V}_{k,F},\omega _{0,p}\right) ^{K}\text{.}
\end{equation*}

Let us now fix $0\neq \varphi \in M\left( \mathbf{V}_{k,F},\omega
_{0,p}\right) ^{K^{\#}}$ such that $\varphi \mid T_{p}=a_{p}\left( \varphi
\right) \varphi $ and define $M\left( \mathbf{V}_{k,F},\omega _{0,p}\right)
^{K,\varphi \text{\textrm{-old}}}\subset M\left( \mathbf{V}_{k,F},\omega
_{0,p}\right) ^{K}$ to be the span of $\left\{ \varphi ,\varphi ^{\left(
p\right) }\right\} $. Let%
\begin{equation*}
X^{2}-a_{p}\left( \varphi \right) X+\omega _{0}\left( \mathbf{p}^{\prime
}\right) p^{k+1}=\left( X-\alpha _{p}\left( \varphi \right) \right) \left(
X-\beta _{p}\left( \varphi \right) \right)
\end{equation*}%
be the Hecke polynomial at $p$ attached to $\varphi $ and define%
\begin{eqnarray*}
\varphi ^{\alpha } &=&\varphi ^{\alpha _{p}\left( \varphi \right) }:=\varphi
-\alpha _{p}\left( \varphi \right) ^{-1}\varphi ^{\left( p\right) }\in
M\left( \mathbf{V}_{k,F},\omega _{0,p}\right) ^{K,\varphi \text{\textrm{-old}%
}}\text{,} \\
\varphi ^{\beta } &=&\varphi ^{\beta _{p}\left( \varphi \right) }:=\varphi
-\beta _{p}\left( \varphi \right) ^{-1}\varphi ^{\left( p\right) }\in
M\left( \mathbf{V}_{k,F},\omega _{0,p}\right) ^{K,\varphi \text{\textrm{-old}%
}}\text{.}
\end{eqnarray*}%
We say that $\varphi $ is supersingular if $\alpha _{p}\left( \varphi
\right) =\beta _{p}\left( \varphi \right) $.

The following result is standard and we leave the proof to the reader.

\begin{proposition}
\label{p-stabilization P1}The following facts are true, assuming that $F$ is
a field such that $\alpha _{p}\left( \varphi \right) ,\beta _{p}\left(
\varphi \right) \in F$ for the statements $2.-5.$.

\begin{itemize}
\item[$\left( 1\right) $] The space $M\left( \mathbf{V}_{k,F},\omega
_{0,p}\right) ^{K,\varphi \text{\textrm{-old}}}$ is two dimensional with
basis $\left\{ \varphi ,\varphi ^{\left( p\right) }\right\} $, stable under
the action of the $U_{p}$ and $W_{p}$ operators.

\item[$\left( 2\right) $] We have $\varphi ^{\alpha }\mid U_{p}=\alpha
_{p}\left( \varphi \right) \varphi $, $\varphi ^{\beta }\mid U_{p}=\beta
_{p}\left( \varphi \right) \varphi $ and, if $\psi \in M\left( \mathbf{V}%
_{k,F},\omega _{0,p}\right) ^{K,\varphi \text{\textrm{-old}}}$ is such that $%
\psi \mid U_{p}=\rho \psi $, then $\psi =\varphi ^{\alpha }$ or $\psi
=\varphi ^{\beta }$ up to a scalar factor.

\item[$\left( 3\right) $] We have%
\begin{equation*}
F\left( \varphi ^{\alpha }\mid W_{p}\right) \cap F\varphi ^{\alpha }=F\left(
\varphi ^{\alpha }\mid W_{p}\right) \cap F\varphi ^{\beta }=0\text{ (resp. }%
F\left( \varphi ^{\beta }\mid W_{p}\right) \cap F\varphi ^{\alpha }=F\left(
\varphi ^{\beta }\mid W_{p}\right) \cap F\varphi ^{\beta }=0\text{)}
\end{equation*}%
unless $\alpha _{p}\left( \varphi \right) ^{2}=p^{k}$ (resp. $\beta
_{p}\left( \varphi \right) ^{2}=p^{k}$) and, in this case, $\varphi ^{\alpha
}\mid W_{p}=-\alpha _{p}\left( \varphi \right) \varphi ^{\alpha }$ (resp. $%
\varphi ^{\beta }\mid W_{p}=-\beta _{p}\left( \varphi \right) \varphi
^{\beta }$). In general,%
\begin{equation*}
\varphi ^{\alpha }\mid W_{p}=\varphi ^{\left( p\right) }-\alpha _{p}\left(
\varphi \right) ^{-1}p^{k}\varphi \text{ (resp. }\varphi ^{\beta }\mid
W_{p}=\varphi ^{\left( p\right) }-\beta _{p}\left( \varphi \right)
^{-1}p^{k}\varphi \text{).}
\end{equation*}

\item[$\left( 4\right) $] If $\varphi $ is not supersingular, then $U_{p}$
is diagonalizable on $M\left( \mathbf{V}_{k,F},\omega _{0,p}\right)
^{K,\varphi \text{\textrm{-old}}}$ and we have%
\begin{equation*}
M\left( \mathbf{V}_{k,F},\omega _{0,p}\right) ^{K,\varphi \text{\textrm{-old}%
}}=F\varphi ^{\alpha }\oplus F\varphi ^{\beta }\text{.}
\end{equation*}

\item[$\left( 5\right) $] If $\varphi $ is supersingular, then%
\begin{equation*}
0\neq F\varphi ^{\alpha }=F\varphi ^{\beta }\subset M\left( \mathbf{V}%
_{k,F},\omega _{0,p}\right) ^{K,\varphi \text{\textrm{-old}}}
\end{equation*}%
is a one dimensional subspace of $M\left( \mathbf{V}_{k,F},\omega
_{0,p}\right) ^{K,\varphi \text{\textrm{-old}}}$ and $U_{p}$ is not
diagonalizable on $M\left( \mathbf{V}_{k,F},\omega _{0,p}\right) ^{K,\varphi 
\text{\textrm{-old}}}$.
\end{itemize}
\end{proposition}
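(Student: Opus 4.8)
The plan is to reduce the whole statement to elementary linear algebra on the two-dimensional space $M\left(\mathbf{V}_{k,F},\omega_{0,p}\right)^{K,\varphi\text{\textrm{-old}}}$, once the matrices of $U_p$ and $W_p$ in the basis $\{\varphi,\varphi^{(p)}\}$ are known. First, part~$(1)$: the vectors $\varphi$ and $\varphi^{(p)}$ are linearly independent at level $K$ because $\varphi$ is new at $p$ (its $p$-component is a multiple of the spherical vector of an unramified principal series), while $\varphi^{(p)}=\varphi\mid K^{\#}\widehat{\pi}_p^{\iota}K$ is its image under the second degeneracy map $M(\mathbf{V}_{k,F},\omega_{0,p})^{K^{\#}}\to M(\mathbf{V}_{k,F},\omega_{0,p})^{K}$, and the two degeneracy images are linearly independent at level $K$; stability of their span under $U_p$ and $W_p$ will drop out of the local computation below.

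That local computation is the only step with genuine content: one must express $\varphi\mid U_p$, $\varphi^{(p)}\mid U_p$, $\varphi\mid W_p$, $\varphi^{(p)}\mid W_p$ in terms of $\varphi$ and $\varphi^{(p)}$. Using the coset decomposition $K\widehat{\pi}_pK=\bigsqcup_{i=0}^{p-1}K\pi_i$ (as in the proof of Proposition~\ref{P:afafsdsf}), the Hecke action~\eqref{F Hecke Operator}, and the hypothesis $\varphi\mid T_p=a_p(\varphi)\varphi$ at level $K^{\#}$ with $K_p^{\#}=\mathbf{GL}_2(\mathbb{Z}_p)$, one obtains, after fixing the normalization of the degeneracy map,
\begin{equation*}
\varphi^{(p)}\mid U_p=\omega_0(\mathbf{p}^{\prime})p^{k+1}\varphi,\qquad \varphi\mid U_p=a_p(\varphi)\varphi-\varphi^{(p)},\qquad \varphi\mid W_p=\varphi^{(p)},\qquad \varphi^{(p)}\mid W_p=p^{k}\varphi.
\end{equation*}
In particular the span of $\{\varphi,\varphi^{(p)}\}$ is $U_p$- and $W_p$-stable, and the characteristic polynomial of $U_p$ on it equals the Hecke polynomial $X^{2}-a_p(\varphi)X+\omega_0(\mathbf{p}^{\prime})p^{k+1}$ of $\varphi$, completing $(1)$.

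Parts $(2)$–$(5)$ then follow from these matrices together with the identities $\alpha_p(\varphi)+\beta_p(\varphi)=a_p(\varphi)$ and $\alpha_p(\varphi)\beta_p(\varphi)=\omega_0(\mathbf{p}^{\prime})p^{k+1}$. Substituting $\varphi^{\alpha}=\varphi-\alpha_p(\varphi)^{-1}\varphi^{(p)}$ into the $U_p$-formula shows $\varphi^{\alpha}$ is a $U_p$-eigenvector with eigenvalue $\alpha_p(\varphi)$, and symmetrically for $\varphi^{\beta}$; conversely a vector killed by $U_p-\rho$ in a two-dimensional space on which $U_p$ has eigenvalues $\alpha_p(\varphi),\beta_p(\varphi)$ must lie on $F\varphi^{\alpha}$ or $F\varphi^{\beta}$, which gives $(2)$. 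Substituting $\varphi^{\alpha}$ into the $W_p$-formula gives $\varphi^{\alpha}\mid W_p=\varphi^{(p)}-\alpha_p(\varphi)^{-1}p^{k}\varphi$, proportional to $\varphi^{\alpha}$ exactly when $\alpha_p(\varphi)^{2}=p^{k}$ (with factor $-\alpha_p(\varphi)$), and the intersection assertions are then line comparisons in the two-dimensional old space, the relation $\alpha_p(\varphi)\beta_p(\varphi)=\omega_0(\mathbf{p}^{\prime})p^{k+1}$ ruling out the remaining coincidence; this is $(3)$. For $(4)$, if $\varphi$ is not supersingular then $\alpha_p(\varphi)\ne\beta_p(\varphi)$, so $U_p$ has two distinct eigenvalues and is diagonalized by the distinct, spanning lines $F\varphi^{\alpha}$ and $F\varphi^{\beta}$; for $(5)$, if $\varphi$ is supersingular then $\varphi^{\alpha}=\varphi^{\beta}$ spans the unique $U_p$-eigenline, a proper subspace of the two-dimensional old space, so $U_p$ is not semisimple on it.

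The main obstacle is purely bookkeeping inside the local computation: one must track the reduced-norm twists (the factor $\omega_0(\mathbf{p}^{\prime})$ and the powers of $p$, arising from the conventions of \S\ref{SS Norm forms} and the weight-$(k+2)$ normalization of $\mathbf{V}_{k,F}$) carefully enough that the characteristic polynomial of $U_p$ matches the Hecke polynomial attached to $\varphi$; with those constants in hand, everything else is formal.
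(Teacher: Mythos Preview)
The paper does not give a proof of this proposition; it simply declares the result standard and leaves it to the reader. Your sketch is exactly the standard argument one would supply: compute the matrices of $U_p$ and $W_p$ on the oldspace in the basis $\{\varphi,\varphi^{(p)}\}$, verify that the characteristic polynomial of $U_p$ coincides with the Hecke polynomial, and then read off $(2)$--$(5)$ by elementary linear algebra. The formulas you record are consistent with the paper's conventions, and your derivation of $\varphi^\alpha\mid W_p=\varphi^{(p)}-\alpha_p(\varphi)^{-1}p^k\varphi$ matches the statement of $(3)$.

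One small terminological slip: in your argument for $(1)$ you write that $\varphi$ is ``new at $p$'' when you mean the opposite---its local component at $p$ is the spherical vector in an unramified principal series, i.e.\ $\varphi$ is \emph{old} at $p$. Your parenthetical makes clear what you intend, and the substance (that the two degeneracy images span the $2$-dimensional Iwahori-fixed space of that principal series) is correct; just adjust the wording.
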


\bigskip

\begin{definition}
If $\underline{\varphi }=\left( \varphi _{1},\varphi _{2},\varphi
_{3}\right) $ is a vector with $\varphi _{i}\in M\left( \mathbf{V}%
_{k_{i},F},\omega _{0,p,i}\right) ^{K^{\#}}$ such that $\varphi _{i}\mid
T_{p}=a_{p}\left( \varphi _{i}\right) \varphi _{i}$, we set $\underline{k}%
^{\ast }:=\frac{k_{1}+k_{2}+k_{3}}{2}$\ and then we define%
\begin{eqnarray*}
\mathcal{E}_{p}\left( \underline{\varphi },X_{1},X_{2},X_{3}\right)
&:&=1-\left( X_{1}^{-1}a_{p}\left( \varphi _{1}\right)
+X_{2}^{-1}a_{p}\left( \varphi _{2}\right) +X_{3}^{-1}a_{p}\left( \varphi
_{3}\right) \right) \\
&&+X_{1}^{-1}X_{2}^{-1}a_{p}\left( \varphi _{3}\right)
+X_{1}^{-1}a_{p}\left( \varphi _{2}\right) X_{3}^{-1}+a_{p}\left( \varphi
_{1}\right) X_{2}^{-1}X_{3}^{-1} \\
&&-X_{1}^{-1}X_{2}^{-1}X_{3}^{-1}p^{2\underline{k}^{\ast }}
\end{eqnarray*}
\end{definition}

Recall the trilinear form $\left( \text{\ref{F trilinear alg}}\right) $.

\begin{proposition}
\label{p-stabilization P2}Suppose that $\varphi _{i}\in M\left( \mathbf{V}%
_{k_{i},F},\omega _{0,p,i}\right) ^{K^{\#}}$ are such that $\varphi _{i}\mid
T_{p}=a_{p}\left( \varphi _{i}\right) \varphi _{i}$ and let $\underline{%
\varphi }=\left( \varphi _{1},\varphi _{2},\varphi _{3}\right) $. Then,
setting $\alpha _{i}:=\alpha _{p}\left( \varphi _{i}\right) $, $\beta
_{i}^{\ast }:=\omega _{0,i}\left( \mathbf{p}^{\prime }\right) ^{-1}\beta
_{p}\left( \varphi _{i}\right) p^{-1}$ and $\underline{k}^{\ast }:=\frac{%
k_{1}+k_{2}+k_{3}}{2}$,%
\begin{eqnarray*}
&&t_{\underline{k}}\left( \varphi _{1}^{\left( \alpha _{1}\right) }\mid
W_{p},\varphi _{2}^{\left( \alpha _{2}\right) },\varphi _{3}^{\left( \alpha
_{3}\right) }\right) =-\alpha _{1}^{-1}p^{k_{1}}\mathcal{E}_{p}\left( 
\underline{\varphi },\beta _{1}^{\ast },\alpha _{2},\alpha _{3}\right) t_{%
\underline{k}}\left( \varphi _{1},\varphi _{2},\varphi _{3}\right) \text{,}
\\
&&t_{\underline{k}}\left( \varphi _{1}^{\left( \alpha _{1}\right) },\varphi
_{2}^{\left( \alpha _{2}\right) }\mid W_{p},\varphi _{3}^{\left( \alpha
_{3}\right) }\right) =-\alpha _{2}^{-1}p^{k_{2}}\mathcal{E}_{p}\left( 
\underline{\varphi },\alpha _{1},\beta _{2}^{\ast },\alpha _{3}\right) t_{%
\underline{k}}\left( \varphi _{1},\varphi _{2},\varphi _{3}\right) \text{,}
\\
&&t_{\underline{k}}\left( \varphi _{1}^{\left( \alpha _{1}\right) },\varphi
_{2}^{\left( \alpha _{2}\right) },\varphi _{3}^{\left( \alpha _{3}\right)
}\mid W_{p}\right) =-\alpha _{3}^{-1}p^{k_{3}}\mathcal{E}_{p}\left( 
\underline{\varphi },\alpha _{1},\alpha _{2},\beta _{3}^{\ast }\right) t_{%
\underline{k}}\left( \varphi _{1},\varphi _{2},\varphi _{3}\right) \text{.}
\end{eqnarray*}
\end{proposition}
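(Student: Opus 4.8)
By the obvious symmetry among the three factors, the three identities are equivalent: the defining polynomial
$\Delta_{\underline{k}/E}=\delta^{\underline{k}_{1}^{\ast}}(W_{2},W_{3})\,\delta^{\underline{k}_{2}^{\ast}}(W_{1},W_{3})\,\delta^{\underline{k}_{3}^{\ast}}(W_{1},W_{2})$ of $\Lambda_{\underline{k}}$, and hence the functional $t_{\underline{k}}$ of $(\ref{F trilinear alg})$, is invariant under simultaneously permuting the pairs of variables $W_{i}$ and the labels of the $\underline{k}_{i}^{\ast}$ — the very symmetry that interchanges $W$ and $\widehat{W}$ in the three choices of $\Delta_{i,\underline{k}}$. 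So I will only prove the third formula. Throughout set $\alpha_{i}=\alpha_{p}(\varphi_{i})$, $\beta_{i}=\beta_{p}(\varphi_{i})$, and recall $\alpha_{i}\beta_{i}=\omega_{0,i}(\mathbf{p}^{\prime})p^{k_{i}+1}$, so that $\beta_{i}^{\ast}=p^{k_{i}}\alpha_{i}^{-1}$ and $\underline{k}^{\ast}=\underline{k}_{3}^{\ast}+k_{3}$.

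\emph{Reduction to the pure degeneracy terms.} By definition $\varphi_{1}^{(\alpha_{1})}=\varphi_{1}-\alpha_{1}^{-1}\varphi_{1}^{(p)}$ and $\varphi_{2}^{(\alpha_{2})}=\varphi_{2}-\alpha_{2}^{-1}\varphi_{2}^{(p)}$, while Proposition \ref{p-stabilization P1}(3) gives $\varphi_{3}^{(\alpha_{3})}\mid W_{p}=\varphi_{3}^{(p)}-\alpha_{3}^{-1}p^{k_{3}}\varphi_{3}$. Since $t_{\underline{k}}$ is trilinear, $t_{\underline{k}}(\varphi_{1}^{(\alpha_{1})},\varphi_{2}^{(\alpha_{2})},\varphi_{3}^{(\alpha_{3})}\mid W_{p})$ is an explicit $F$-linear combination of the eight quantities $t_{\underline{k}}(\psi_{1},\psi_{2},\psi_{3})$ with each $\psi_{i}\in\{\varphi_{i},\varphi_{i}^{(p)}\}$, where $\varphi_{i}^{(p)}=\varphi_{i}\mid K^{\#}\widehat{\pi}_{p}^{\iota}K$. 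It therefore suffices to express each of these eight terms as a multiple of $t_{\underline{k}}(\varphi_{1},\varphi_{2},\varphi_{3})$ and to sum.

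\emph{Evaluating the degeneracy terms.} Each of the eight quantities is a value of the functional $(\ref{F trilinear alg})$, built from the $\mathbf{GL}_{2}(\mathbb{Q}_{p})$-equivariant pairing $\Lambda_{\underline{k}/\mathbb{Q}_{p}}$, evaluated on forms obtained from the $\mathbf{GL}_{2}(\mathbb{Z}_{p})$-fixed $T_{p}$-eigenforms $\varphi_{i}$ by applying the degeneracy operator $[K^{\#}\widehat{\pi}_{p}^{\iota}K]$ in some of the three slots. Grouping two of the slots, one moves this operator onto a single slot and then off the triple by the classical case of Proposition \ref{P:heckeadjointprop}, the intervening $\mathrm{Nrd}$- and nebentypus-twist factors being exactly those recorded in Lemma \ref{L:afafsdsf}; because the $\varphi_{i}$ are $T_{p}$-eigenforms fixed by $\mathbf{GL}_{2}(\mathbb{Z}_{p})$ and $\Lambda_{\underline{k}/\mathbb{Q}_{p}}$ is $\mathbf{GL}_{2}(\mathbb{Q}_{p})$-equivariant, every term collapses to $t_{\underline{k}}(\varphi_{1},\varphi_{2},\varphi_{3})$ times an explicit monomial in the $\alpha_{i}$ (equivalently in the $a_{p}(\varphi_{i})=\alpha_{i}+\beta_{i}$), in $p$, and in the $\omega_{0,i}(\mathbf{p}^{\prime})$. (The same bookkeeping may be organized $p$-adically: lift the $\varphi_{i}^{(\alpha_{i})}$ to finite-slope distribution-valued eigenforms, use $(\ref{F Interpolation})$ and Corollary \ref{C:afafsdsf}, and unwind $t_{3,\underline{k}}^{\circ}$ on the lifts by un-stabilizing in all three slots.) Substituting the eight evaluations and simplifying the resulting $F$-linear combination of monomials by means of $\alpha_{i}\beta_{i}=\omega_{0,i}(\mathbf{p}^{\prime})p^{k_{i}+1}$, $\beta_{i}^{\ast}=p^{k_{i}}\alpha_{i}^{-1}$ and $\underline{k}^{\ast}=\underline{k}_{3}^{\ast}+k_{3}$ collapses it to $-\alpha_{3}^{-1}p^{k_{3}}\,\mathcal{E}_{p}(\underline{\varphi},\alpha_{1},\alpha_{2},\beta_{3}^{\ast})\,t_{\underline{k}}(\varphi_{1},\varphi_{2},\varphi_{3})$, which is the assertion.

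\emph{The main obstacle} is this last step: carrying out the evaluation of the mixed degeneracy terms and verifying that the eight contributions assemble into the degree-eight Euler polynomial $\mathcal{E}_{p}$. One must keep careful track of the $\mathrm{Nrd}_{p}^{\underline{k}^{\ast}}$-normalization in $(\ref{F trilinear alg})$ against the $\det=p$ shift produced by $\widehat{\pi}_{p}^{\iota}$, of the difference between $\underline{k}^{\ast}$ and the $\underline{k}_{i}^{\ast}$, and of the exact powers of $p$ and of $\omega_{0,i}(\mathbf{p}^{\prime})$ prescribed by Lemma \ref{L:afafsdsf}; the compatibility of $W_{p}$ with the homogeneous distributions, via $(\ref{F det 1})$, $(\ref{F det 2})$ and Lemma \ref{L 3-forms}, is used along the way.
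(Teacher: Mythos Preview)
Your approach is the same as the paper's: reduce by symmetry to the third identity, expand trilinearly into the eight terms $t_{\underline{k}}(\psi_1,\psi_2,\psi_3)$ with $\psi_i\in\{\varphi_i,\varphi_i^{(p)}\}$, evaluate each as a multiple of $t_{\underline{k}}(\varphi_1,\varphi_2,\varphi_3)$ using Hecke adjointness and the $T_p$-eigenform property, and collect terms. The paper organizes the eight terms as $A^{(3)}-B^{(3)}+C^{(3)}-D^{(3)}$ and makes explicit the two facts that do the work: (i) for a single degeneracy, since the other two factors are $K^{\#}$-fixed one may compute the pairing at level $K^{\#}$ and the degeneracy becomes $T_p$; (ii) for two or three degeneracies one uses $K^{\#}\widehat{\pi}_p^{\iota}K=K^{\#}\widehat{\pi}_p^{\iota}$ (a single coset) to merge them into one translate, then adjointness from Proposition~\ref{P:heckeadjointprop}. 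You invoke the right adjointness result but leave (i) and especially (ii) implicit; they are what make the ``moving the operator'' step precise.

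Two minor corrections. Your appeal to Lemma~\ref{L:afafsdsf} is misplaced: that lemma concerns the $p$-adic form $t_{3,\underline{k}}$, whereas here one needs the classical case of Proposition~\ref{P:heckeadjointprop} applied directly to the $\mathbf{GL}_2(\mathbb{Q}_p)$-equivariant $\Lambda_{\underline{k}/\mathbb{Q}_p}$. Likewise, the remark about the symmetry ``that interchanges $W$ and $\widehat{W}$'' is irrelevant here---this proposition lives entirely on the classical side and only the permutation of indices $1,2,3$ is used. Finally, your parenthetical $p$-adic alternative via Corollary~\ref{C:afafsdsf} and $(\ref{F Interpolation})$ does not by itself yield the statement: those results compare $t_{3,\underline{k}}^{\circ}$, $t_{3,\underline{k}}$ and $t_{\underline{k}}$ on the \emph{stabilized} forms, but give no relation to $t_{\underline{k}}(\varphi_1,\varphi_2,\varphi_3)$ on the unstabilized forms without redoing exactly the classical computation you are trying to avoid.
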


\begin{proof}
We have, by definition and Proposition \ref{p-stabilization P1} $\left(
3\right) $,%
\begin{eqnarray*}
t_{\underline{k}}\left( \varphi _{1}^{\left( \alpha _{1}\right) },\varphi
_{2}^{\left( \alpha _{2}\right) },\varphi _{3}^{\left( \alpha _{3}\right)
}\mid W_{p}\right) &=&-t_{\underline{k}}\left( \varphi _{1}-\alpha
_{1}^{-1}\varphi _{1}^{\left( p\right) },\varphi _{2}-\alpha
_{2}^{-1}\varphi _{2}^{\left( p\right) },\alpha _{3}^{-1}p^{k_{3}}\varphi
_{3}-\varphi _{3}^{\left( p\right) }\right) \\
&=&A^{\left( 3\right) }-B^{\left( 3\right) }+C^{\left( 3\right) }-D^{\left(
3\right) }
\end{eqnarray*}%
where%
\begin{eqnarray*}
A^{\left( 3\right) } &=&\alpha _{3}^{-1}p^{k_{3}}t_{\underline{k}}\left(
\varphi _{1},\varphi _{2},\varphi _{3}\right) \text{,} \\
B^{\left( 3\right) } &=&\alpha _{1}^{-1}\alpha _{3}^{-1}p^{k_{3}}t_{%
\underline{k}}\left( \varphi _{1}^{\left( p\right) },\varphi _{2},\varphi
_{3}\right) +\alpha _{2}^{-1}\alpha _{3}^{-1}p^{k_{3}}t_{\underline{k}%
}\left( \varphi _{1},\varphi _{2}^{\left( p\right) },\varphi _{3}\right) +t_{%
\underline{k}}\left( \varphi _{1},\varphi _{2},\varphi _{3}^{\left( p\right)
}\right) \\
C^{\left( 3\right) } &=&\alpha _{1}^{-1}\alpha _{2}^{-1}\alpha
_{3}^{-1}p^{k_{3}}t_{\underline{k}}\left( \varphi _{1}^{\left( p\right)
},\varphi _{2}^{\left( p\right) },\varphi _{3}\right) +\alpha _{1}^{-1}t_{%
\underline{k}}\left( \varphi _{1}^{\left( p\right) },\varphi _{2},\varphi
_{3}^{\left( p\right) }\right) +\alpha _{2}^{-1}t_{\underline{k}}\left(
\varphi _{1},\varphi _{2}^{\left( p\right) },\varphi _{3}^{\left( p\right)
}\right) \\
D^{\left( 3\right) } &=&\alpha _{1}^{-1}\alpha _{2}^{-1}t_{\underline{k}%
}\left( \varphi _{1}^{\left( p\right) },\varphi _{2}^{\left( p\right)
},\varphi _{3}^{\left( p\right) }\right) \text{.}
\end{eqnarray*}

Regarding $t_{\underline{k}}$ as a pairing as we did in the proof of
Corollary \ref{C:afafsdsf} (for $t_{3,\underline{k}}$), we compute%
\begin{eqnarray*}
t_{\underline{k}}\left( \varphi _{1},\varphi _{2},\varphi _{3}^{\left(
p\right) }\right) &=&\left\langle \varphi _{1}\otimes \varphi _{2},\varphi
_{3}\mid K^{\#}\widehat{\pi }_{p}^{\iota }K\right\rangle _{t}=\left\langle
\varphi _{1}\otimes \varphi _{2},\varphi _{3}\mid T_{p}\right\rangle _{t} \\
&=&a_{p}\left( \varphi _{3}\right) \left\langle \varphi _{1}\otimes \varphi
_{2},\varphi _{3}\right\rangle _{t}=a_{p}\left( \varphi _{3}\right) t_{%
\underline{k}}\left( \varphi _{1},\varphi _{2},\varphi _{3}\right) \text{.}
\end{eqnarray*}%
Working in a similar way for the other first two terms of $B^{\left(
3\right) }$ we deduce (recall $\alpha _{3}p^{-k_{3}}=\omega _{0,3}\left( 
\mathbf{p}^{\prime }\right) \beta _{3}^{-1}p$)%
\begin{eqnarray*}
B^{\left( 3\right) } &=&\left\{ \alpha _{1}^{-1}a_{p}\left( \varphi
_{1}\right) \alpha _{3}^{-1}p^{k_{3}}+\alpha _{2}^{-1}a_{p}\left( \varphi
_{2}\right) \alpha _{3}^{-1}p^{k_{3}}+a_{p}\left( \varphi _{3}\right)
\right\} t_{\underline{k}}\left( \varphi _{1},\varphi _{2},\varphi
_{3}\right) \\
&=&\alpha _{3}^{-1}p^{k_{3}}\left\{ \alpha _{1}^{-1}a_{p}\left( \varphi
_{1}\right) +\alpha _{2}^{-1}a_{p}\left( \varphi _{2}\right) +a_{p}\left(
\varphi _{3}\right) \alpha _{3}p^{-k_{3}}\right\} t_{\underline{k}}\left(
\varphi _{1},\varphi _{2},\varphi _{3}\right) \\
&=&\alpha _{3}^{-1}p^{k_{3}}\left\{ \alpha _{1}^{-1}a_{p}\left( \varphi
_{1}\right) +\alpha _{2}^{-1}a_{p}\left( \varphi _{2}\right) +\omega
_{0,3}\left( \mathbf{p}^{\prime }\right) \beta _{3}^{-1}pa_{p}\left( \varphi
_{3}\right) \right\} t_{\underline{k}}\left( \varphi _{1},\varphi
_{2},\varphi _{3}\right) \text{.}
\end{eqnarray*}

Noticing that we have $K^{\#}\widehat{\pi }_{p}^{\iota }K=K^{\#}\widehat{\pi 
}_{p}^{\iota }$, we find%
\begin{equation*}
\varphi _{2}^{\left( p\right) }\otimes \varphi _{3}^{\left( p\right)
}=\left( \varphi _{2}\otimes \varphi _{3}\right) \mid K^{\#}\widehat{\pi }%
_{p}^{\iota }K\text{.}
\end{equation*}%
Hence we find, using the adjointness property of Proposition \ref%
{P:heckeadjointprop},%
\begin{eqnarray*}
t_{\underline{k}}\left( \varphi _{1},\varphi _{2}^{\left( p\right) },\varphi
_{3}^{\left( p\right) }\right) &=&\left\langle \varphi _{1},\varphi
_{2}\otimes \varphi _{3}\mid K^{\#}\widehat{\pi }_{p}^{\iota }K\right\rangle
_{t}=\left\langle \varphi _{1}\mid T_{p},\varphi _{2}\otimes \varphi
_{3}\right\rangle _{t} \\
&=&a_{p}\left( \varphi _{3}\right) \left\langle \varphi _{1},\varphi
_{2}\otimes \varphi _{3}\right\rangle _{t}=a_{p}\left( \varphi _{3}\right)
t_{\underline{k}}\left( \varphi _{1},\varphi _{2},\varphi _{3}\right) \text{.%
}
\end{eqnarray*}%
Working in a similar way for the other first two terms of $C^{\left(
3\right) }$ we deduce (recall $\alpha _{3}p^{-k_{3}}=\omega _{0,3}\left( 
\mathbf{p}^{\prime }\right) \beta _{3}^{-1}p$)%
\begin{eqnarray*}
C^{\left( 3\right) } &=&\left\{ \alpha _{1}^{-1}\alpha _{2}^{-1}a_{p}\left(
\varphi _{3}\right) \alpha _{3}^{-1}p^{k_{3}}+\alpha _{1}^{-1}a_{p}\left(
\varphi _{2}\right) +\alpha _{2}^{-1}a_{p}\left( \varphi _{1}\right)
\right\} t_{\underline{k}}\left( \varphi _{1},\varphi _{2},\varphi
_{3}\right) \\
&=&\alpha _{3}^{-1}p^{k_{3}}\left\{ \alpha _{1}^{-1}\alpha
_{2}^{-1}a_{p}\left( \varphi _{3}\right) +\alpha _{1}^{-1}a_{p}\left(
\varphi _{2}\right) \alpha _{3}p^{-k_{3}}+\alpha _{2}^{-1}a_{p}\left(
\varphi _{1}\right) \alpha _{3}p^{-k_{3}}\right\} t_{\underline{k}}\left(
\varphi _{1},\varphi _{2},\varphi _{3}\right) \\
&=&\alpha _{3}^{-1}p^{k_{3}}\left\{ \alpha _{1}^{-1}\alpha
_{2}^{-1}a_{p}\left( \varphi _{3}\right) +\alpha _{1}^{-1}\beta
_{3}^{-1}pa_{p}\left( \varphi _{2}\right) +\alpha _{2}^{-1}\omega
_{0,3}\left( \mathbf{p}^{\prime }\right) \beta _{3}^{-1}pa_{p}\left( \varphi
_{1}\right) \right\} t_{\underline{k}}\left( \varphi _{1},\varphi
_{2},\varphi _{3}\right) \text{.}
\end{eqnarray*}

Finally, once again using $K^{\#}\widehat{\pi }_{p}^{\iota }K=K^{\#}\widehat{%
\pi }_{p}^{\iota }$, we find%
\begin{equation*}
\varphi _{1}^{\left( p\right) }\otimes \varphi _{2}^{\left( p\right)
}\otimes \varphi _{3}^{\left( p\right) }=\varphi _{1}\otimes \varphi
_{2}\otimes \varphi _{3}\mid K^{\#}\widehat{\pi }_{p}^{\iota }
\end{equation*}%
and then $t_{\underline{k}}\left( \varphi _{1}^{\left( p\right) },\varphi
_{2}^{\left( p\right) },\varphi _{3}^{\left( p\right) }\right) =p^{2%
\underline{k}^{\ast }}t_{\underline{k}}\left( \varphi _{1},\varphi
_{2},\varphi _{3}\right) $. Hence we find%
\begin{eqnarray*}
D^{\left( 3\right) } &=&\alpha _{1}^{-1}\alpha _{2}^{-1}p^{2\underline{k}%
^{\ast }}t_{\underline{k}}\left( \varphi _{1},\varphi _{2},\varphi
_{3}\right) =\alpha _{3}^{-1}p^{k_{3}}\cdot \alpha _{1}^{-1}\alpha
_{2}^{-1}p^{2\underline{k}^{\ast }}\alpha _{3}p^{-k_{3}}t_{\underline{k}%
}\left( \varphi _{1},\varphi _{2},\varphi _{3}\right) \\
&=&\alpha _{3}^{-1}p^{k_{3}}\cdot \alpha _{1}^{-1}\alpha _{2}^{-1}\omega
_{0,k_{i}}\left( \mathbf{p}^{\prime }\right) \beta _{3}^{-1}p\cdot p^{2%
\underline{k}^{\ast }}t_{\underline{k}}\left( \varphi _{1},\varphi
_{2},\varphi _{3}\right) \text{.}
\end{eqnarray*}

Inserting these computations of $B^{\left( 3\right) }$, $C^{\left( 3\right)
} $ and $D^{\left( 3\right) }$ gives the third equation. The first two
equations are proved in a similar way.
\end{proof}

\subsection{Interpolation property of the $p$-adic trilinear form}

Recall our given $\underline{\mathbf{k}}=\left( \mathbf{k}_{1},\mathbf{k}%
_{2},\mathbf{k}_{3}\right) $ and consider the spaces $M_{p}(\mathcal{D}_{%
\mathbf{k}_{i}}(W),\omega _{0,p}^{\mathbf{k}_{i}})^{\Gamma _{0}\left( p%
\mathbb{Z}_{p}\right) }$, where $\omega _{0,p}^{\mathbf{k}_{i}}\left(
z\right) =\omega _{0,i}\left( \frac{z}{\mathrm{N}_{f}\left( z\right) }%
\right) \left( \frac{z}{\mathrm{N}_{f}\left( z\right) }\right) _{p}^{-%
\mathbf{k}_{i}}$ with $\omega _{0,i}$ taking values in $F$ and $\omega
_{0,1}\omega _{0,2}\omega _{0,3}=1$. We have specialization map attached to $%
\phi _{i}:\mathbf{k}_{i}\rightarrow k_{i}\in \mathbb{N}$:%
\begin{equation*}
\phi _{i,\ast }^{\mathrm{al}\text{\textrm{g}}}:M_{p}(\mathcal{D}_{\mathbf{k}%
_{i}}(W),\omega _{0,p}^{\mathbf{k}_{i}})^{\Gamma _{0}\left( p\mathbb{Z}%
_{p}\right) }\rightarrow M_{p}(\mathbf{V}_{k_{i},F},\omega
_{0,p}^{k_{i}})^{\Gamma _{0}\left( p\mathbb{Z}_{p}\right) }\text{,}
\end{equation*}%
where $\omega _{0,p}^{k_{i}}\left( z\right) =\omega _{0,i}\left( \frac{z}{%
\mathrm{N}_{f}\left( z\right) }\right) \left( \frac{z}{\mathrm{N}_{f}\left(
z\right) }\right) _{p}^{-k_{i}}$. The $U_{p}$-operator acts on these spaces
and the Ash-Stevens theory of \cite{ASdef} applies to show that there exists
a slope $\leq h\in \mathbb{R}$ decompositions and that $U_{p}$-eigenvectors
of slope $\leq h<k+1$ on $M_{p}(\mathbf{V}_{k_{i},F},\omega
_{0,p}^{k_{i}})^{\Gamma _{0}\left( p\mathbb{Z}_{p}\right) }$ lifts to
families belonging to $M_{p}(\mathcal{D}_{\mathbf{k}_{i}}(W),\omega _{0,p}^{%
\mathbf{k}_{i}})^{\Gamma _{0}\left( p\mathbb{Z}_{p}\right) }$ in an
essential unique way when $\phi _{i}:\mathbf{k}_{i}\rightarrow k_{i}$ is
obtained from $k_{i}\in U_{i}\subset \mathcal{X}_{\mathbb{Z}_{p}^{\times }}$
(see \cite{Ch} and \cite[Theorem 3.7]{Se}, for example). This motivates our
interest in this kind of spaces. If $\varphi _{i}\in M_{p}(\mathcal{D}_{%
\mathbf{k}_{i}}(W),\omega _{0,p}^{\mathbf{k}_{i}})$, we define%
\begin{equation*}
\mathcal{L}_{p,3}\left( \underline{\varphi }\right) :=t_{3,\underline{%
\mathbf{k}}}^{\circ }\left( \varphi _{1}\otimes \varphi _{2}\otimes \varphi
_{3}\mid W_{3}\right) \in \mathcal{O}_{\underline{\mathbf{k}}}\text{ where }%
\underline{\varphi }=\left( \varphi _{1},\varphi _{2},\varphi _{3}\right) 
\text{;}
\end{equation*}%
in a similar way we can define $\mathcal{L}_{p,1}$ and $\mathcal{L}_{p,2}$.
Set $\phi :=\phi _{1}\otimes \phi _{2}\otimes \phi _{3}$ and $\phi _{\ast }^{%
\mathrm{al}\text{\textrm{g}}}:=\phi _{1,\ast }^{\mathrm{al}\text{\textrm{g}}%
}\otimes \phi _{2,\ast }^{\mathrm{al}\text{\textrm{g}}}\otimes \phi _{3,\ast
}^{\mathrm{al}\text{\textrm{g}}}$. When $\varphi _{i}\in M_{p}(\mathcal{D}_{%
\mathbf{k}_{i}}(W),\omega _{0,p}^{\mathbf{k}_{i}})^{\Gamma _{0}\left( p%
\mathbb{Z}_{p}\right) }$ such that $\varphi _{i}\mid U_{p}=\alpha _{i}\left(
\varphi _{i}\right) \varphi _{i}$, setting $\varphi _{i,k_{i}}:=\phi
_{i,\ast }^{\mathrm{al}\text{\textrm{g}}}\left( \varphi _{i}\right) \in
M_{p}(\mathbf{V}_{k_{i},F},\omega _{0,p}^{k_{i}})^{\Gamma _{0}\left( p%
\mathbb{Z}_{p}\right) }$ we have $\varphi _{i,k_{i}}\mid U_{p}=\alpha
_{i}\left( \varphi _{i,k_{i}}\right) \varphi _{i,k_{i}}$ where $\alpha
_{i}\left( \varphi _{i,k_{i}}\right) =\phi _{i}\left( \alpha _{i}\left(
\varphi _{i}\right) \right) $ and, thanks to Proposition \ref%
{p-stabilization P1} $\left( 2\right) $, we have $\varphi _{i,k_{i}}=\varphi
_{i,k_{i}}^{\#,\alpha _{i}\left( \varphi _{i,k_{i}}\right) }$ for a uniquely
determined $\varphi _{i,k_{i}}^{\#}\in M_{p}(\mathbf{V}_{k_{i},F},\omega
_{0,p}^{k_{i}})^{\mathbf{GL}_{2}\left( \mathbb{Z}_{p}\right) }$. Let us
write $M_{p}(\mathcal{D}_{\mathbf{k}_{i}}(W),\omega _{0,p}^{\mathbf{k}%
_{i}})^{\Gamma _{0}\left( p\mathbb{Z}_{p}\right) ,\alpha _{i}}\subset M_{p}(%
\mathcal{D}_{\mathbf{k}_{i}}(W),\omega _{0,p}^{\mathbf{k}_{i}})^{\Gamma
_{0}\left( p\mathbb{Z}_{p}\right) }$ for the submodule of $U_{p}$%
-eigenvectors with eigenvalue $\alpha _{i}$ and set $\alpha _{i,k_{i}}:=\phi
_{i}\left( \alpha _{i}\right) $ and $\beta _{i,k_{i}}^{\ast }:=\alpha
_{i,k_{i}}^{-1}p^{k_{i}}$. If $\underline{\varphi }=\left( \varphi
_{1},\varphi _{2},\varphi _{3}\right) $ is such that $\varphi _{i}\in M_{p}(%
\mathcal{D}_{\mathbf{k}_{i}}(W),\omega _{0,p}^{\mathbf{k}_{i}})^{\Gamma
_{0}\left( p\mathbb{Z}_{p}\right) ,\alpha _{i}}$ and $\underline{\alpha }%
:=\left( \alpha _{1},\alpha _{2},\alpha _{3}\right) $, we set $\underline{%
\varphi }_{\underline{k}}^{\#}:=\left( \varphi _{1,k_{1}}^{\#},\varphi
_{2,k_{2}}^{\#},\varphi _{3,k_{3}}^{\#}\right) $ and then define%
\begin{eqnarray}
\mathcal{E}_{p,3}^{\left( 1\right) }\left( \underline{\varphi },\underline{k}%
\right)  &=&\mathcal{E}_{p,3}^{\left( 1\right) }\left( \underline{\alpha },%
\underline{k}\right) :=(1-\omega _{0,3}\left( \mathbf{p}^{\prime }\right) 
\frac{\alpha _{3,k_{3}}}{\alpha _{1,k_{1}}\alpha _{2,k_{2}}}p^{\underline{k}%
_{3}^{\ast }})\text{,}  \notag \\
\mathcal{E}_{p,3}^{\left( 2\right) }\left( \underline{\varphi },\underline{k}%
\right)  &=&\mathcal{E}_{p,3}^{\left( 2\right) }\left( \underline{\alpha },%
\underline{k}\right) :=-\alpha _{3,k_{3}}^{-1}p^{k_{3}}\mathcal{E}_{p}\left( 
\underline{\varphi }_{\underline{k}}^{\#},\alpha _{1,k_{1}},\alpha
_{2,k_{2}},\beta _{3,k_{3}}^{\ast }\right) \text{;}  \label{F Euler}
\end{eqnarray}%
we may similarly define the quantities $\mathcal{E}_{p,1}^{\left( 1\right) }$%
, $\mathcal{E}_{p,1}^{\left( 1\right) }$, $\mathcal{E}_{p,2}^{\left(
2\right) }$ and $\mathcal{E}_{p,2}^{\left( 2\right) }$. Let $Z_{i,\underline{%
\alpha }}$ be the set of those $\underline{k}$s such that $\mathcal{E}%
_{p,i}^{\left( j\right) }\left( \underline{\varphi },\underline{k}\right) =0$
for $j=1$ or $2$.

As a combination of Theorem \ref{Special value T Ichino}, $\left( \text{\ref%
{F Interpolation}}\right) $, Corollary \ref{C:afafsdsf} and Proposition \ref%
{p-stabilization P2} we get the following result.

\begin{theorem}
\label{T Main}For $i=1,2,3$, the trilinear form%
\begin{equation*}
\mathcal{L}_{p,i}:M_{p}(\mathcal{D}_{\mathbf{k}_{1}}(W),\omega _{0,p}^{%
\mathbf{k}_{1}})^{\Gamma _{0}\left( p\mathbb{Z}_{p}\right) }\otimes M_{p}(%
\mathcal{D}_{\mathbf{k}_{2}}(W),\omega _{0,p}^{\mathbf{k}_{2}})^{\Gamma
_{0}\left( p\mathbb{Z}_{p}\right) }\otimes M_{p}(\mathcal{D}_{\mathbf{k}%
_{3}}(W),\omega _{0,p}^{\mathbf{k}_{3}})^{\Gamma _{0}\left( p\mathbb{Z}%
_{p}\right) }\rightarrow \mathcal{O}_{\underline{\mathbf{k}}}
\end{equation*}%
has the property that, for every $\underline{\alpha }=\left( \alpha
_{1},\alpha _{2},\alpha _{3}\right) \neq \underline{0}$, the restriction $%
\mathcal{L}_{p,i}^{\underline{\alpha }}$\ of it to%
\begin{equation*}
M_{p}(\mathcal{D}_{\mathbf{k}_{1}}(W),\omega _{0,p}^{\mathbf{k}%
_{1}})^{\Gamma _{0}\left( p\mathbb{Z}_{p}\right) ,\alpha _{1}}\otimes M_{p}(%
\mathcal{D}_{\mathbf{k}_{2}}(W),\omega _{0,p}^{\mathbf{k}_{2}})^{\Gamma
_{0}\left( p\mathbb{Z}_{p}\right) ,\alpha _{2}}\otimes M_{p}(\mathcal{D}_{%
\mathbf{k}_{3}}(W),\omega _{0,p}^{\mathbf{k}_{3}})^{\Gamma _{0}\left( p%
\mathbb{Z}_{p}\right) ,\alpha _{3}}
\end{equation*}%
satisfies the following interpolation property.

For every $\phi :\underline{\mathbf{k}}\rightarrow \underline{k}$ with $%
\underline{k}\in U_{\underline{\mathbf{k}}}\cap \Sigma _{123}$ arithmetic
and balanced,%
\begin{equation}
\phi \circ \mathcal{L}_{i}=\mathcal{E}_{p,i}^{\left( 1\right) }\left( -,%
\underline{k}\right) \mathcal{E}_{p,i}^{\left( 2\right) }\left( -,\underline{%
k}\right) t_{\underline{k}}\left( -\right)  \label{T Main Claim 1}
\end{equation}%
and this interpolation property uniquely characterize $\mathcal{L}_{p,i}$
on\ when $\mathcal{O}_{\underline{\mathbf{k}}}$ is reduced.

For every $\underline{\varphi }$ in this space, let $\pi _{\underline{k}}$
be the automorphic representation attached to $\underline{\varphi }$ at an
arithmetic $\underline{k}$ and suppose that either $L\left( \pi _{\underline{%
k}},1/2\right) =0$ for every arithmetic $\underline{k}$ or there is some
arithmetic $\underline{k}^{0}\notin Z_{i,\underline{\alpha }}$ such that $%
L\left( \pi _{\underline{k}^{0}},1/2\right) \neq 0$ and that $B$ is the
quaternion algebra for $\pi _{\underline{k}^{0}}$\ predicted by \cite{Pr} at 
$\underline{k}^{0}$. Then there is a Zariski open subset $\phi \neq U_{%
\underline{\mathbf{k}}}^{\underline{\varphi }}\subset \mathrm{Sp}\left( 
\mathcal{O}_{\underline{\mathbf{k}}}\right) $ only depending on the $\mathbf{%
B}^{\times }\left( \mathbb{A}_{f}^{p}\right) $-representation $M_{\underline{%
\varphi }}$ generated by $\underline{\varphi }$ over $\mathcal{O}_{%
\underline{\mathbf{k}}}$ such that, for every $\phi :\underline{\mathbf{k}}%
\rightarrow \underline{k}$ with $\underline{k}\in U_{\underline{\mathbf{k}}%
}^{\underline{\varphi }}\cap \Sigma _{123}$ arithmetic and balanced,%
\begin{equation}
\mathcal{L}_{i}\left( \underline{\varphi }\right) \left( \underline{k}%
\right) ^{2}=\mathcal{E}_{p,i}^{\left( 1\right) }\left( \underline{\varphi },%
\underline{k}\right) ^{2}\mathcal{E}_{p,i}^{\left( 2\right) }\left( 
\underline{\varphi },\underline{k}\right) ^{2}\frac{1}{2^{3}m_{\mathbf{Z}_{%
\mathbf{B}}\mathbf{\backslash B},\infty }^{2}}\frac{\zeta _{\mathbb{Q}%
}^{2}\left( 2\right) L\left( 1/2,\pi _{\underline{k}}\right) }{L\left( 1,-,%
\mathrm{Ad}\right) }\prod\nolimits_{v}\alpha _{v}\left( \phi \left( 
\underline{\varphi }\right) \right) \text{ on }M_{\underline{\varphi }}\text{%
.}  \label{T Main Claim 2}
\end{equation}
\end{theorem}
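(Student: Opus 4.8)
The plan is to obtain the theorem by chaining the four ingredients cited just above its statement; I treat $i=3$, the cases $i=1,2$ being identical after permuting the roles of the factors. Fix an arithmetic, balanced $\phi\colon\underline{\mathbf{k}}\to\underline{k}$ with $\underline{k}\in U_{\underline{\mathbf{k}}}\cap\Sigma_{123}$ and start from the definition $\mathcal{L}_{p,3}(\underline{\varphi})=t_{3,\underline{\mathbf{k}}}^{\circ}(\varphi_{1}\otimes\varphi_{2}\otimes\varphi_{3}\mid W_{3})$. First I would push $\phi$ through. The compatibility of the distribution construction with specialization --- equation \eqref{Distributions F3}, the interpolation of $\mathrm{Nrd}_{p}^{\underline{\mathbf{k}}^{\ast}}$ from \S\ref{SS Norm forms}, and the specialization of $\Delta_{3,\underline{\mathbf{k}}}^{\circ}$ to its integral--weight counterpart --- identifies $\phi(\mathcal{L}_{p,3}(\underline{\varphi}))$ with the integral--weight trilinear form $t_{3,\underline{k}}^{\circ}$ evaluated at the specializations of $\varphi_{1},\varphi_{2}$ and of $\varphi_{3}\mid W_{3}$. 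Since $\varphi_{i}\mid U_{p}=\alpha_{i}\varphi_{i}$ for $i=1,2$ and $\varphi_{3}\mid W_{3}$ is then a $U_{p}^{\iota}$--eigenvector, Corollary \ref{C:afafsdsf} converts this into $\mathcal{E}_{p,3}^{(1)}(\underline{\alpha},\underline{k})\cdot t_{3,\underline{k}}(\,\cdots\mid W_{3})$, and \eqref{F Interpolation} rewrites the surviving factor as the classical $t_{\underline{k}}(\varphi_{1,k_{1}},\varphi_{2,k_{2}},\varphi_{3,k_{3}}\mid W_{p})$. Finally, $\varphi_{i,k_{i}}$ is the $\alpha_{i,k_{i}}$--stabilization of a $p$--new form $\varphi_{i,k_{i}}^{\#}$ (Proposition \ref{p-stabilization P1}(2)), so the third identity of Proposition \ref{p-stabilization P2} --- with $\beta_{i,k_{i}}^{\ast}=\alpha_{i,k_{i}}^{-1}p^{k_{i}}$, which agrees with the $\beta_{i}^{\ast}$ appearing there via the Hecke relation $\alpha_{p}\beta_{p}=\omega_{0}(\mathbf{p}^{\prime})p^{k+1}$ --- produces the factor $\mathcal{E}_{p,3}^{(2)}(\underline{\alpha},\underline{k})$ and the term $t_{\underline{k}}(\varphi_{1,k_{1}}^{\#},\varphi_{2,k_{2}}^{\#},\varphi_{3,k_{3}}^{\#})$. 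Composing the steps gives \eqref{T Main Claim 1}, with $t_{\underline{k}}(-)$ the classical trilinear form at $\underline{\varphi}_{\underline{k}}^{\#}$; the only delicate bit is the bookkeeping of the Hecke eigenvalues and of the Euler factors under the Atkin--Lehner twists.

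For the uniqueness statement, the arithmetic balanced weights in $U_{\underline{\mathbf{k}}}$ are Zariski dense in $\mathrm{Sp}(\mathcal{O}_{\underline{\mathbf{k}}})$ (the family being balanced, each factor of weight space contributes a dense set of classical points), so when $\mathcal{O}_{\underline{\mathbf{k}}}$ is reduced an element of $\mathcal{O}_{\underline{\mathbf{k}}}$ is recovered from its images under the associated $\phi$'s; since the right--hand side of \eqref{T Main Claim 1} involves only the independently defined $t_{\underline{k}}$ and the explicit $\mathcal{E}_{p,i}^{(j)}$, the identity \eqref{T Main Claim 1} pins down each $\mathcal{L}_{p,i}^{\underline{\alpha}}$, hence $\mathcal{L}_{p,i}$.

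For \eqref{T Main Claim 2}, squaring \eqref{T Main Claim 1} gives $\mathcal{L}_{i}(\underline{\varphi})(\underline{k})^{2}=\mathcal{E}_{p,i}^{(1)}(\underline{\varphi},\underline{k})^{2}\,\mathcal{E}_{p,i}^{(2)}(\underline{\varphi},\underline{k})^{2}\,t_{\underline{k}}(\underline{\varphi}_{\underline{k}}^{\#})^{2}$, so one must evaluate $t_{\underline{k}}^{2}$ by Theorem \ref{Special value T Ichino}(1); the obstacle --- which I expect to be the main difficulty --- is that the latter requires the fixed $B$ to be \emph{the} quaternion algebra attached by \cite{Pr} to $\pi_{\underline{k}}$, and this is assumed only at $\underline{k}^{0}$. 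To propagate it, I would first observe that under the second hypothesis \eqref{T Main Claim 1} gives $\mathcal{L}_{i}|_{M_{\underline{\varphi}}}(\underline{k}^{0})=\mathcal{E}_{p,i}^{(1)}\mathcal{E}_{p,i}^{(2)}\,t_{\underline{k}^{0}}(\underline{\varphi}_{\underline{k}^{0}}^{\#})$, which is nonzero: the Euler factors do not vanish since $\underline{k}^{0}\notin Z_{i,\underline{\alpha}}$, and $t_{\underline{k}^{0}}\neq0$ by Theorem \ref{Special value T Ichino}(3) (applicable as $B=B_{\pi_{\underline{k}^{0}}}$) together with $L(\pi_{\underline{k}^{0}},1/2)\neq0$. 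Hence $\mathcal{L}_{i}|_{M_{\underline{\varphi}}}$ is a nonzero element of $\mathcal{O}_{\underline{\mathbf{k}}}$ depending only on $M_{\underline{\varphi}}$, and I let $U_{\underline{\mathbf{k}}}^{\underline{\varphi}}$ be its nonvanishing locus (under the first hypothesis, where $L(\pi_{\underline{k}},1/2)$ vanishes throughout, one may take $U_{\underline{\mathbf{k}}}^{\underline{\varphi}}=\mathrm{Sp}(\mathcal{O}_{\underline{\mathbf{k}}})$, both sides of \eqref{T Main Claim 2} then vanishing by Theorem \ref{Special value T Ichino}(3)). For an arithmetic balanced $\underline{k}\in U_{\underline{\mathbf{k}}}^{\underline{\varphi}}$, \eqref{T Main Claim 1} forces $t_{\underline{k}}(\underline{\varphi}_{\underline{k}}^{\#})\neq0$; but a nonzero global trilinear period on the Jacquet--Langlands lift to the fixed $B$ can occur only when $B$ ramifies exactly at the places at which the local triple--product sign of $\pi_{\underline{k}}$ is $-1$, by Prasad's local dichotomy \cite{Pr} and the global nonvanishing criterion of \cite{HK}, so $B=B_{\pi_{\underline{k}}}$. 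Theorem \ref{Special value T Ichino}(1) then applies and gives $t_{\underline{k}}(\underline{\varphi}_{\underline{k}}^{\#})^{2}=\frac{1}{2^{3}m_{\mathbf{Z}_{\mathbf{B}}\mathbf{\backslash B},\infty}^{2}}\frac{\zeta_{\mathbb{Q}}^{2}(2)L(1/2,\pi_{\underline{k}})}{L(1,\pi_{\underline{k}},\mathrm{Ad})}\prod\nolimits_{v}\alpha_{v}(\phi(\underline{\varphi}))$, which inserted into the squared \eqref{T Main Claim 1} yields \eqref{T Main Claim 2}. As stressed in the introduction, it is precisely this step --- showing the Prasad quaternion algebra is ``generic'' across the family, equivalently that the balanced region is the well--posed interpolation region --- where the test--vector--free construction is essential; the rest is routine.
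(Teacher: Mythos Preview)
Your argument follows the paper's proof essentially line for line: the chain \eqref{F Interpolation}--Corollary~\ref{C:afafsdsf}--Proposition~\ref{p-stabilization P2} for \eqref{T Main Claim 1}, Zariski density for uniqueness, and for \eqref{T Main Claim 2} the same dichotomy (identically vanishing $L$-value versus a good $\underline{k}^{0}$) with the propagation of the Prasad condition via non-vanishing of the period.

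One point deserves tightening. In the non-trivial case you write that ``$\mathcal{L}_{i}|_{M_{\underline{\varphi}}}$ is a nonzero element of $\mathcal{O}_{\underline{\mathbf{k}}}$'' and define $U_{\underline{\mathbf{k}}}^{\underline{\varphi}}$ as its non-vanishing locus. But $\mathcal{L}_{i}|_{M_{\underline{\varphi}}}$ is a linear form on $M_{\underline{\varphi}}$, not an element of $\mathcal{O}_{\underline{\mathbf{k}}}$; and from Theorem~\ref{Special value T Ichino}(3) you only get that the \emph{functional} $t_{\underline{k}^{0}}$ is nonzero, not that $t_{\underline{k}^{0}}(\underline{\varphi}_{\underline{k}^{0}}^{\#})\neq 0$ for your particular $\underline{\varphi}$. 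The paper resolves this by choosing some $\underline{\varphi}^{0}\in M_{\underline{\varphi}}$ with $\mathcal{L}_{3}(\underline{\varphi}^{0})(\underline{k}^{0})\neq 0$ and letting $U_{\underline{\mathbf{k}}}^{\underline{\varphi}}$ be the non-vanishing locus of the single function $\mathcal{L}_{3}(\underline{\varphi}^{0})\in\mathcal{O}_{\underline{\mathbf{k}}}$; on that open set the period at $\underline{\varphi}^{0}$ is nonzero, forcing $B=B_{\pi_{\underline{k}}}$, after which Theorem~\ref{Special value T Ichino}(1) gives \eqref{T Main Claim 2} for \emph{all} $\underline{\varphi}'\in M_{\underline{\varphi}}$ since it is an identity of functionals. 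Your argument becomes correct once you insert this choice of $\underline{\varphi}^{0}$.
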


\begin{proof}
Formula $\left( \text{\ref{T Main Claim 1}}\right) $ follows from $\left( 
\text{\ref{F Interpolation}}\right) $, Corollary \ref{C:afafsdsf} and
Proposition \ref{p-stabilization P2}. Suppose first $\underline{\mathbf{k}}$
is associated to the open affinoid $U_{\underline{\mathbf{k}}}:=\mathrm{Sp}%
\left( \mathcal{O}_{\underline{\mathbf{k}}}\right) \subset \mathcal{X}^{3}$.

Let $\pi _{\underline{k}}$ be the automorphic representation attached to $%
\underline{\varphi }$ and first assume that $L\left( \pi _{\underline{k}%
},1/2\right) =0$ for every arithmetic $\underline{k}$. Then, by the Jacquet
conjecture proved by Harris and Kudla (see \cite{HK}), the period integral
at all these $\underline{k}$s is zero for every $\underline{\varphi }%
^{\prime }\in M_{\underline{\varphi }}$; it follows from $\left( \text{\ref%
{T Main Claim 1}}\right) $ that $\mathcal{L}_{3}\left( \underline{\varphi }%
^{\prime }\right) \left( \underline{k}\right) =0$ for every arithmetic $%
\underline{k}$. By Zariski density of arithmetic points we deduce that $%
\mathcal{L}_{3}\left( \underline{\varphi }^{\prime }\right) \in \mathcal{O}_{%
\underline{\mathbf{k}}}$ is zero. Then $U_{\underline{\mathbf{k}}}^{%
\underline{\varphi }}:=U_{\underline{\mathbf{k}}}$ is such that $\left( 
\text{\ref{T Main Claim 2}}\right) $ is true at $\left( \underline{\varphi }%
^{\prime },\underline{k}\right) $ for every $\underline{\varphi }^{\prime
}\in M_{\underline{\varphi }}$\ and every arithmetic $\underline{k}\in U_{%
\underline{\mathbf{k}}}^{\underline{\varphi }}$.

Suppose now that there is some $\underline{k}^{0}$ as above. Then the right
hand side of $\left( \text{\ref{T Main Claim 1}}\right) $ is non-zero and,
hence, $\mathcal{L}_{3}\left( \underline{\varphi }^{0}\right) \left( 
\underline{k}^{0}\right) \neq 0$ for some $\underline{\varphi }^{0}\in M_{%
\underline{\varphi }}$. Since $U_{\underline{\mathbf{k}}}\subset \mathcal{X}%
^{3}$ is associated to a domain, there is a Zariski open subset $\phi \neq
U_{\underline{\mathbf{k}}}^{\underline{\varphi }^{0}}\subset U_{\underline{%
\mathbf{k}}}$ such that $\mathcal{L}_{3}\left( \underline{\varphi }%
^{0}\right) \left( \underline{\kappa }\right) \neq 0$ for every $\underline{%
\kappa }\in U_{\underline{\mathbf{k}}}^{\underline{\varphi }^{0}}$ and, in
particular, for every arithmetic $\underline{k}\in U_{\underline{\mathbf{k}}%
}^{\underline{\varphi }^{0}}$. But then we deduce from $\left( \text{\ref{T
Main Claim 1}}\right) $ that $Z_{i,\underline{\alpha }}\subset U_{\underline{%
\mathbf{k}}}-U_{\underline{\mathbf{k}}}^{\underline{\varphi }^{0}}$ and that
the period integral at all these $\underline{k}$s is non-zero. It follows
that Theorem \ref{Special value T Ichino} $\left( 1\right) $ is in force at
these $k$s: indeed the non-vanishing of the period integral implies the
non-vanishing of the product of the local forms at $\underline{k}$
(appearing in the definition of $C\left( \underline{\varphi }^{0},\underline{%
k}\right) $)\ and, hence, our $B$ is the quaternion algebra predicted by 
\cite{Pr} for all these $k$s. We deduce (once again applying $\left( \text{%
\ref{T Main Claim 1}}\right) $)\ that $\left( \text{\ref{T Main Claim 2}}%
\right) $ is true at $\left( \underline{\varphi }^{\prime },\underline{k}%
\right) $ for every $\underline{\varphi }^{\prime }\in M_{\underline{\varphi 
}}$\ and every arithmetic $\underline{k}\in U_{\underline{\mathbf{k}}}^{%
\underline{\varphi }}:=U_{\underline{\mathbf{k}}}^{\underline{\varphi }^{0}}$%
. Finally, an arbitrary $\underline{\mathbf{k}}$ is associated to a morphism 
$\mathrm{Sp}\left( \mathcal{O}_{\underline{\mathbf{k}}}\right) \rightarrow 
\mathcal{X}^{3}$ and we can pull-back.
\end{proof}

\bigskip

As explained in the introduction (see $\left( A\right) $ and $\left(
B\right) $), under the assumption that there is some arithmetic $\underline{k%
}^{0}\notin Z_{i,\underline{\alpha }}$ such that $L\left( \pi _{\underline{k}%
^{0}},1/2\right) \neq 0$ and $B$ is the quaternion algebra for $\pi _{%
\underline{k}^{0}}$\ predicted by \cite{Pr} at $\underline{k}^{0}$, there is
a test family $\underline{\varphi }=\left( \varphi _{1},\varphi _{2},\varphi
_{3}\right) $ on some open affinoid $U^{\prime }\subset U_{\underline{%
\mathbf{k}}}^{\underline{\varphi }}$ which corresponds, by the
Jacquet-Langlands correspondence (applied componentwisely), to a Coleman
family $\mathbf{f}=\left( \mathbf{f}_{1},\mathbf{f}_{2},\mathbf{f}%
_{3}\right) $ on $U^{\prime }$. Let $V_{\mathbf{f}_{i}}$ be the $p$-adic
representation attached to the Coleman family $\mathbf{f}_{i}$ and set $V_{%
\mathbf{f}}:=V_{\mathbf{f}_{1}}\otimes V_{\mathbf{f}_{2}}\otimes V_{\mathbf{f%
}_{3}}$. These three $p$-adic $L$-functions $\mathcal{L}_{i}\left( 
\underline{\varphi }\right) $ encodes information about the extended Selmer
group $\widetilde{H}_{f,\Sigma _{123}}^{1}\left( \mathbb{Q},V_{\mathbf{f}%
}\right) $ which interpolates the Selmer groups $\widetilde{H}_{f}^{1}\left( 
\mathbb{Q},V_{\mathbf{f}_{\underline{k}}^{\#}}\right) $ defined by the
Bloch-Kato conditions at every $\underline{k}\in \Sigma _{123}$. For
example, the vanishing of the Euler factors $\mathcal{E}_{p,i}^{\left(
1\right) }\left( \underline{\varphi },\underline{k}\right) $ is related to
the dimension of the Nekovar period space at $\underline{k}$ (see \cite{BSV}%
).

\end{document}